\newtheoremstyle{named}{}{}{\itshape}{}{\bfseries}{.}{.5em}{\thmname{#1} $\thmnote{ #3}_{\thmnumber{ #2}}$}
\newtheoremstyle{claim}{}{}{\itshape}{}{\bfseries}{.}{.5em}{}
\newtheorem{defi}{Definition}[section]
\newtheorem{przykl}[defi]{Example}
\newtheorem{twr}[defi]{Theorem}
\newtheorem{stwr}[defi]{Proposition}
\newtheorem{lt}[defi]{Lemma}
\newtheorem{uw}[defi]{Remark}
\theoremstyle{named}
\newtheorem{Ass}{Assumption}
\theoremstyle{claim}
\newtheorem*{claim}{Claim}
\newtheorem{claim_no}{Claim}
\newenvironment{assum}[2]{\begin{Ass}[#1] \label{#2} }{\end{Ass}}
\NewDocumentEnvironment{proofclaim}{m}{}{\begin{flushleft}%
This proves the Claim.
\end{flushleft} %
\bigskip
}
\newcommand{\ud}{\mathrm{d}}
\newcommand{\rzecz}{\mathbb{R}}
\newcommand{\wpo}{W^{1,p}(\Omega)}
\newcommand{\pd}[2]{\frac{\partial #1}{\partial #2}}
\newcommand{\dpd}[2]{\frac{\mathrm{d} #1}{\mathrm{d} #2}}
\newcommand{\mt}[1]{\mathrm{#1}}
\newcommand{\ttt}[1]{\textnormal{#1}}
\newcommand{\tto}[1]{\textrm{ #1 }}
\newcommand{\comz}[1]{\int\limits_{\Omega} #1 \ud z}
\newcommand{\essinf}{\mathop{\rm ess\,inf}}
\newcommand{\nabbibg}[6]{#1, \textit{#2}, #3, \textbf{#4} (#5), pp. #6} %1autor 2 tytuł 3 czasopismo 4numer 5rok 6strony
\newcommand{\nabbibk}[5]{#1, #2, #3, #4, #5} %1autor 2 tytuł 3 wydawca 4 miasto 5 rok
\newcommand*{\fullref}[1]{\hyperref[{#1}]{\nameref*{#1}\textsubscript{{\ref*{#1}}}}}
\numberwithin{equation}{section}
\begin{document}
\pagestyle{plain}
\title{
Nodal and multiple solutions for a~nonhomogeneous Neumann boundary problem
}
\author{Liliana Klimczak}
\address{Faculty of Mathematics and Computer Science, Jagiellonian University,
Lojasiewicza 6, 30-048 Krakow, Poland}
\keywords{local minimizers, truncations, constant sign solutions}
\subjclass[2000]{35J20; 35J60}
\begin{abstract}
We consider a nonlinear Neumann problem driven by a $p$-Laplacian-type, nonhomogeneous elliptic differential operator and a~Carath\'eodory reaction term. In this paper we prove the existence of two extremal constant sign smooth solutions and a nontrivial nodal smooth solution. In the proof we use variational methods with truncation techniques, critical point theory and Morse theory (critical groups). 
\end{abstract}
\maketitle
\section{Introduction}
Let $\Omega\subseteq\rzecz^N$ be a bounded domain with a $C^{2}$ boundary $\partial\Omega$. In this paper we look for smooth solutions to the following
Neumann problem
\begin{equation}\label{Problem}
\left\{
\begin{array}{ll}
 -\ttt{div }a\left(\nabla u (z)\right)=f\left(z, u (z)\right)& \tto{a.e. in}\Omega, \\
 \pd{u}{n_a}=0 &\tto{on} \partial\Omega,
\end{array}
\right.
\end{equation}
where $\pd{u}{n_a}=\left( a(\nabla u (z)),n(z)\right)_{\rzecz^N}$ with $n(\cdot)=(n_1(\cdot), \ldots, n_N(\cdot))$ the outward unit normal vector on $\partial
\Omega$. On the continuous map $a=(a_i)_{i=1}^N\colon\rzecz^N\to\rzecz^N$ we impose certain conditions (see Section \ref{sec:setting}) to obtain a $p$-Laplacian
type operator, which unifies several important differential operators. Similar conditions are studied widely in literature (see Damascelli \cite{damas98},
Montenegro \cite{monte99}, Gasi\'nski-Papageorgiou \cite{cGasinskiPapageorgiou2008a}, Motreanu-Papageorgiou \cite{MotreanuPapageorgiou2011} and also \cite{Klimczak2015b}), as they allow us to apply the regularity results of Lieberman \cite{liebe}. The reaction term
$f\colon\Omega\times\rzecz\to\rzecz$ is a Carath\'eodory function. We assume that $f(z, \cdot)$ has a positive and negative $z$-dependant zero and we impose some growth conditions on $f(z, \cdot)$ only
near zero, without any control in $\pm\infty$, and we use
the existence result for constant sign positive and negative solutions of problem \eqref{Problem} in \cite{Klimczak2015a} to prove the existence of extremal positive and negative solutions. Next we need to strengthen our hypotheses to obtain a third, nodal solution - some control on the behaviour of the reaction term in infinity is necessary.

The rest of this paper is organized as follows. In section \ref{sec:math_back} we provide mathematical preliminaries and  recall the main mathematical tools
which will be employed in this paper. In section \ref{sec:setting} we formulate the assumptions on maps $a$ and $f$, provide some examples and formulate the main theorem of the paper. Next, in section \ref{sec:extremal}, we recall the existence result and main tools of the proof and then prove the existence of two extremal solutions. Using this result, in section \ref{sec:nodal} we provide the proof of the existence of the nodal solution.
%%%%%%%%%%%%%%%%%%%%%%%%%%%%%%%%%%%%%%%55
%%%%%%%%%%%%%%%%%%%%%%%%%%%%%%%%%%%%%%55
%%%%%%%%%%%%%%%%%%%%%%%%%%%%%%%%%%%%%%%%%55
\section{Mathematical background}\label{sec:math_back}
In this section we provide the main mathematical tools needed in the proofs. We will denote by $\left(\cdot,\cdot \right)_{\rzecz^N}$ the scalar product in $\rzecz^N$ and by $|\cdot |_{N}$ - the Lebesgue measure in $\rzecz^N$. 
\begin{twr}[25.D in Zeidler \cite{ziib}]\label{twr:minicoer}
Let $X$ be a reflexive Banach space and let $M\subseteq X$ be its nonempty closed convex subset. Suppose that $\phi\colon M\to\rzecz$ is a weakly sequentially lower semicontinuous and weakly coercive functional, i.e. for each $u\in M$ and each sequence $\{u_n\}_n\subseteq M$
 such that $u_n\rightarrow u$ weakly in $X$, we have
$$
  \phi(u)\leq \liminf_{n\to\infty}\phi(u_n);
$$ and $$
\lim\limits_{\|u\|\to\infty}\phi(u)=\infty \quad \tto{on} M.
$$ Then $\phi$ has a minimum on $M$.
\end{twr}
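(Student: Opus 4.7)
The plan is to apply the direct method of the calculus of variations. Let $m := \inf_{u \in M} \phi(u) \in [-\infty, +\infty)$; this is finite above because $M$ is nonempty. I first take a minimizing sequence $\{u_n\} \subseteq M$ with $\phi(u_n) \to m$. The key point is that such a sequence must be norm-bounded: indeed, if $\|u_n\| \to \infty$ along some subsequence, weak coercivity would force $\phi(u_n) \to +\infty$, contradicting $\phi(u_n) \to m < +\infty$. This boundedness argument also rules out $m = -\infty$ once combined with the lower semicontinuity step below.

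Next I would extract a weakly convergent subsequence. Since $X$ is reflexive, the Eberlein--\v{S}mulian/Kakutani theorem guarantees that the bounded sequence $\{u_n\}$ has a weakly convergent subsequence $u_{n_k} \rightharpoonup u$ for some $u \in X$. The step I would flag as the conceptual heart of the proof is showing $u \in M$: the hypothesis on $M$ is that it is strongly closed and convex, not weakly closed, so one must invoke Mazur's theorem to upgrade strong closedness of a convex set to weak closedness. Once this is in hand, $u \in M$ and $\phi(u)$ is defined.

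Finally I would apply the weak sequential lower semicontinuity hypothesis to the sequence $u_{n_k} \rightharpoonup u$ in $M$:
\[
\phi(u) \leq \liminf_{k \to \infty} \phi(u_{n_k}) = m.
\]
Combined with $u \in M$ and therefore $\phi(u) \geq m$, this gives $\phi(u) = m$, so the infimum is attained and $u$ is the desired minimizer. This also retroactively shows $m > -\infty$, closing the argument.

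The whole proof is short; the only nontrivial external input is Mazur's theorem to identify the weak limit as an element of $M$, and the reflexivity of $X$ to produce a weak limit in the first place. No calculation is needed beyond stringing these standard facts together in the correct order.
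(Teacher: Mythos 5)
Your proof is correct. The paper does not prove this theorem itself (it is quoted as Theorem 25.D from Zeidler), and your argument is exactly the standard direct-method proof behind that citation: weak coercivity bounds a minimizing sequence, reflexivity yields a weakly convergent subsequence, Mazur's theorem upgrades the closed convex set $M$ to a weakly (sequentially) closed set so the limit lies in $M$, and weak sequential lower semicontinuity then forces the limit to attain the infimum (in particular showing the infimum is finite).
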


\begin{twr}[1.7 in Lieberman \cite{liebe}]\label{twr:liebe}
Let $h\colon\rzecz_+\to\rzecz$  be a $C^1$-function satisfying
$$
\delta<\frac{th'(t)}{h(t)}\leq c_0\tto{ for all }t>0
$$
with some constants $\delta>0$, $c_0>0$.
We define $H(\xi)=\int\limits_0^{\xi}h(t)\ud t$. By $W^{1, H}(\Omega)$ we denote the class of functions which are weakly differentiable in the set $\Omega$ with
$$
\comz{H(|\nabla u|)}<\infty.
$$
Let $\alpha \in (0,1]$, $\Lambda$, $\Lambda_1$, $M_0>0$ be positive constants and let $\Omega\subseteq \rzecz^N$ be a bounded domain with $C^{1,\alpha}$ boundary. Suppose that $A=(A_1, \ldots,
A_N)\colon\Omega\times[-M_0, M_0]\times\rzecz^N\to\rzecz^N$ is differentiable, $B\colon\Omega\times\left([-M_0, M_0]\times\rzecz^N\right)\to\rzecz$ is a Carath\'eodory
function and functions $A$, $B$ satisfy the following conditions
\begin{subequations}\label{ass:lieberman}
\begin{gather}
 \left(\nabla_y A(z_1,\xi_1, y)x, x\right)_{\rzecz^N}\geq \frac{h(|y|)}{|y|}|x|^2, \quad  y\neq 0_N \label{eq:liebea}\\
 |\frac{\partial}{\partial y_j} A_i(z, \xi, y)|\leq \Lambda\frac{h(|y|)}{|y|},  \quad  y\neq 0_N \label{eq:liebeb}\\
|A(z_1,\xi_1, y )-A(z_2,\xi_2, y )| \leq \Lambda_1\left(1+h(|y|) \right)\left(|z_1-z_2|^{\alpha}+|\xi_1-\xi_2|^{\alpha}\right),\label{eq:liebec}\\
|B(z_1, \xi_1, y)|\leq \Lambda_1(1+h(|y|)|y|),\label{eq:liebed}
\end{gather}
\end{subequations}
for all $z_1, z_2\in\Omega$, $\xi_1, \xi_2\in[-M_0, M_0]$ and $x,y\in\rzecz^N$.
Then any $W^{1, H}(\Omega)$ solution $u$ of
\begin{equation}
\ttt{div } A(z, u, \nabla u)+B(z, u, \nabla u)=0
\end{equation}
 in $\Omega$ with $|u|\leq M_0$ in $\Omega$ is in $C^{1,\beta}(\Omega)$ for some positive $\beta$ depending on $\alpha$, $\Lambda$, $\delta$, $c_0$, $N$.
\end{twr}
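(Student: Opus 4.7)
The plan is to follow the scheme that Lieberman developed in \cite{liebe} for quasilinear equations with nonstandard (Orlicz-type) growth, which itself extends the DiBenedetto--Tolksdorf--Ural'tseva approach to the $p$-Laplacian. The two-sided bound $\delta<th'(t)/h(t)\leq c_0$ is the crucial structural hypothesis: it forces $H$ to satisfy a $\Delta_2$-condition and to lie between two pure power functions, so the energy space $W^{1,H}(\Omega)$ enjoys good embedding properties and the operator $\ttt{div}\,A$ is sandwiched between two $p$-Laplacian-type operators with exponents governed by $\delta+1$ and $c_0+1$.

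The first step would be a local $L^{\infty}$ bound for $\nabla u$. Using the ellipticity \eqref{eq:liebea} together with the growth bounds \eqref{eq:liebeb}--\eqref{eq:liebed}, one tests the equation against suitable powers of $H(|\nabla u|)$ multiplied by cut-off functions to obtain a Caccioppoli-type inequality in which the natural energy is $H(|\nabla u|)$ rather than a pure power. A Moser iteration, in which the exponents are adjusted to respect the $\Delta_2$-structure of $H$, then yields $\nabla u\in L^{\infty}_{\mt{loc}}$. Up to the boundary one flattens $\partial\Omega$ (which is $C^{1,\alpha}$) by a local $C^{1,\alpha}$ diffeomorphism and observes that the Hölder dependence \eqref{eq:liebec} of $A$ in $(z,\xi)$ is preserved under this change of variables, so the interior estimate extends.

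Once $|\nabla u|\leq K$ is established, I would prove the $C^{1,\beta}$ estimate by the classical alternative argument on a shrinking family of concentric balls. On balls where $|\nabla u|$ stays away from zero by a definite amount, the equation becomes uniformly elliptic with ellipticity constants depending only on $\Lambda,\delta,c_0$; freezing the coefficients and applying a DiBenedetto--Friedman type perturbation of linear Schauder theory delivers geometric oscillation decay for $\nabla u$. On balls where $|\nabla u|$ is small, one compares $u$ with a solution of a frozen-coefficient problem associated with $h$, exploits its intrinsic regularity, and transfers the oscillation decay back to $u$ via a Campanato-type comparison. Iterating the two alternatives on a geometric sequence of radii produces the uniform $C^{1,\beta}$ estimate with $\beta$ depending only on $\alpha,\Lambda,\delta,c_0,N$.

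The main obstacle, as always in this circle of ideas, is the degenerate regime $|\nabla u|\to 0$, where the effective ellipticity constant $h(|\nabla u|)/|\nabla u|$ itself degenerates. The upper bound $th'(t)/h(t)\leq c_0$ controls how fast this happens, while the lower bound $\delta<th'(t)/h(t)$ prevents it from happening too slowly; together they are exactly what is needed to make the alternative scheme close uniformly. Producing oscillation-decay estimates that are genuinely independent of how close $\nabla u$ comes to the singular set, and matching the two regimes across all scales, is the step that absorbs most of the technical work.
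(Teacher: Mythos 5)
This statement is not proved in the paper at all: it is quoted verbatim as Theorem 1.7 of Lieberman \cite{liebe} and used as a black box for the regularity of bounded weak solutions, so there is no internal proof to compare your argument against. Judged on its own, your outline does follow the route that Lieberman actually takes (and that the Ladyzhenskaya--Ural'tseva/DiBenedetto--Tolksdorf circle of ideas suggests): first a local gradient bound from the Orlicz-type energy $H(|\nabla u|)$, using that $\delta<th'(t)/h(t)\leq c_0$ traps $h$ between the powers $t^{\delta}$ and $t^{c_0}$ and gives a $\Delta_2$-structure, and then $C^{1,\beta}$ regularity via an alternatives/comparison scheme that treats the nondegenerate regime by frozen-coefficient Schauder-type estimates and the degenerate regime $|\nabla u|\to 0$ by comparison with the homogeneous $h$-problem, with the two-sided bound on $th'/h$ ensuring the iteration closes uniformly. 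So the strategy is the right one.

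What you have, however, is a description of the proof rather than a proof: every analytically hard step is named but not carried out. In particular, the Caccioppoli inequality obtained by ``testing against powers of $H(|\nabla u|)$'' requires a justification that such test functions are admissible (a preliminary difference-quotient or $W^{2,2}_{loc}$ argument), the Moser-type iteration must be run with exponents dictated by $\delta$ and $c_0$ and checked to close, and the oscillation-decay estimate must be shown to be uniform in the degeneracy parameter --- this last point is exactly where Lieberman's paper spends its effort, and your sketch only asserts that ``the alternative scheme closes uniformly.'' Two smaller remarks: the conclusion of the theorem as stated is interior regularity $u\in C^{1,\beta}(\Omega)$, so the boundary-flattening step you describe is not needed for this statement (it would be for the global version); and you should track that the exponent $\beta$ ends up depending only on $\alpha,\Lambda,\delta,c_0,N$, as claimed, while $\Lambda_1$ and $M_0$ enter only the size of the norm --- your outline never isolates which constants enter where. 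Since this is a deep cited theorem, the appropriate course in the present paper is to cite \cite{liebe}, as the author does, rather than to reprove it from a sketch.
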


\begin{twr}[5.3.1 in Pucci-Serrin \cite{pserr2007}]\label{twr:pserr}
Let $\Omega\subseteq\rzecz^N$ be a domain. Suppose that $A\in C^1(\rzecz^+)$ is such that function $s\mapsto sA(s)$ is strictly increasing in $\rzecz^+$ and $s
A(s)\to 0$ as $s\to 0^+$. Let $B\in L^{\infty}_{loc}(\Omega\times\rzecz^+\times \rzecz^N)$ satisfy the following condition
\begin{equation}\label{eq:B2}
B(z, \xi, y)\geq - \kappa \Phi (|y|) - p(\xi) \tto{for all}(z,\xi,y)\in \Omega\times[0,\infty)\times \rzecz^N, \tto{such that }|\xi|\leq 1,
\end{equation}
where $\kappa>0$ is a constant, $p\colon\rzecz^+\to\rzecz$ is non-decreasing on some interval $(0,\delta)$, $\delta>0$, $\Phi(s):=sA(s)$ when $s>0$ and $\Phi(0):=0$. For $s\geq 0$ we define
\begin{equation}
L(s)=s\Phi(s)-\int_0^s\Phi(t)\ud t.
\end{equation}
If either $p\equiv 0$ in $[0, d]$, $d>0$, or the following condition is satisfied
\begin{equation}
\lim_{\epsilon\to 0^+}\int_0^{\epsilon}\frac{1}{L^{-1}(P(s))}\ud s=\infty,
\end{equation}
where $P(s)=\int_0^s p(t)\ud t$, then the strong maximum principle for
\begin{equation}\label{eq:pserr}
\ttt{div }(A(|\nabla u(z)|)\nabla u(z))+ B(z, u(z), \nabla u(z))\leq 0
\end{equation}
holds,
i.e. if u is a classical distribution solution of \eqref{eq:pserr} with $u(z_0)=0$ at some point $z_0\in\Omega$, then $u\equiv 0$ in
$\Omega$. By classical distribution solution we mean a function $u\in C^1(\Omega)$, which satisfies \ \eqref{eq:pserr} in the distribution sense.
 \end{twr}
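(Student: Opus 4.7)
The plan is to argue by contradiction and build a radial subsolution on an annulus touching the zero set of $u$, then to invoke a weak comparison principle. Suppose $u \not\equiv 0$ on $\Omega$ but $u(z_0) = 0$ for some $z_0 \in \Omega$. Restricting to a neighborhood on which $u \geq 0$, the zero set $Z = \{z : u(z) = 0\}$ is a proper closed subset whose boundary inside $\Omega$ meets $\{u > 0\}$; one can therefore pick an interior ball $B(y, R) \subset \{u > 0\}$ with $\overline{B(y,R)} \subset \Omega$, $R = \ttt{dist}(y, Z)$, and some $z_0 \in Z \cap \partial B(y,R)$.

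On the annulus $\mathcal{A} = \{R/2 < |z-y| < R\}$ I would construct a radial function $v(z) = \psi(R - |z-y|)$ with $\psi(0) = 0$ and $\psi, \psi' > 0$ on $(0, R/2]$. Using the lower bound \eqref{eq:B2} on $B$ together with the identity $L'(s) = s\Phi'(s)$, the condition that $v$ be a classical subsolution of \eqref{eq:pserr} reduces, after the radial substitution, to an ODE for $\psi$ that may be recast in the form
\[
\dpd{}{t}\bigl(L(\psi'(t)) - P(\psi(t))\bigr) \geq -K\, L(\psi'(t)),
\]
where $K = K(N, \kappa, R)$ absorbs the radial divergence term $(N-1)/r$ and the $\kappa\Phi(|\nabla v|)$ contribution. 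Integrating and inverting the strictly increasing $L$, I would obtain a solution $\psi$ with $\psi(0) = 0$ and $\psi'(0^+) > 0$ under exactly the posited dichotomy: when $p \equiv 0$ near $0$ an explicit power-type ansatz suffices, and when $p$ is not identically zero the hypothesis $\int_0^\epsilon \ud s / L^{-1}(P(s)) = \infty$ is precisely what prevents $\psi$ from degenerating at the origin.

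With $v$ in hand, rescale so that $v \leq u$ on the inner sphere $|z-y| = R/2$ (possible by compactness, since $u > 0$ there). The weak comparison principle for $\ttt{div}(A(|\nabla\cdot|)\nabla\cdot)$, available via the strict monotonicity of $s \mapsto sA(s)$ assumed on $A$, then yields $v \leq u$ throughout $\mathcal{A}$. Since $u(z_0) = v(z_0) = 0$ and both functions are $C^1$, comparison of outward normal derivatives gives $\partial_n u(z_0) \leq -\psi'(0^+) < 0$; but $u \geq 0$ attains an interior minimum at $z_0$, so $\nabla u(z_0) = 0$, a contradiction. The main obstacle is the ODE step: the $\Phi$-structure of the operator must be matched against the sign-indefinite term $B$ so that the integrability hypothesis on $1/L^{-1}(P(s))$ emerges as the sharp condition for existence of the barrier $\psi$, and classical counterexamples with sublinear reactions show no weaker condition suffices.
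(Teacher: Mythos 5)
The paper offers no proof of this statement: it is quoted verbatim as Theorem 5.3.1 of Pucci--Serrin \cite{pserr2007} and used as a black box, so the only meaningful comparison is with the proof in that monograph, whose strategy (interior tangent ball, radial barrier on an annulus, Hopf-type contradiction at the touching point) is exactly the one you outline. At the level of architecture your sketch is therefore the canonical route. But as a proof it has genuine gaps, and they sit precisely at the two places where the theorem's hypotheses do their work. First, the construction of the barrier $\psi$ with $\psi(0)=0$ and $\psi'(0^+)>0$ is only asserted: you write down a differential inequality of the form $\frac{\mathrm{d}}{\mathrm{d}t}\bigl(L(\psi'(t))-P(\psi(t))\bigr)\geq -K\,L(\psi'(t))$ and then say that the dichotomy (either $p\equiv 0$ near $0$ or $\int_0^{\epsilon}\mathrm{d}s/L^{-1}(P(s))=\infty$) ``is precisely what prevents $\psi$ from degenerating,'' which is the entire analytic content of the theorem. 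One must actually solve (or super-solve) the two-point problem, invert $L$ (which requires knowing $L$ is a strictly increasing bijection onto its range near $0$, a consequence of the structure assumptions on $A$ that you never verify), and carry out the first-integral estimate in which the divergence term $(N-1)/r$ and the $\kappa\Phi$ term are absorbed; deferring this as ``the main obstacle'' leaves the proof unfinished.

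Second, the comparison step is not covered by what you invoke. Strict monotonicity of $s\mapsto sA(s)$ gives a comparison principle for the pure operator $\mathrm{div}(A(|\nabla\cdot|)\nabla\cdot)$, but after using \eqref{eq:B2} the inequality satisfied by $u$ contains the gradient term $\kappa\Phi(|\nabla u|)$ and the zeroth-order term $p(u)$, which is only non-decreasing on a small interval $(0,\delta)$ and need not be monotone globally; comparison for such quasilinear inequalities is itself a nontrivial result (Pucci--Serrin devote separate comparison theorems to it, and must also ensure both $u$ and the barrier stay in the range where \eqref{eq:B2} and the monotonicity of $p$ apply, i.e.\ below $\min\{1,\delta\}$ after rescaling). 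Finally, note that the conclusion requires $u\geq 0$ (this hypothesis is present in Pucci--Serrin but dropped in the transcription above); your phrase ``restricting to a neighbourhood on which $u\geq 0$'' is not justified for a sign-changing $u$, since the touching point $z_0$ could lie on the common boundary of $\{u>0\}$ and $\{u<0\}$, and then the final step ``$u$ attains an interior minimum at $z_0$, so $\nabla u(z_0)=0$'' fails. You should either state the nonnegativity hypothesis explicitly or explain why it can be assumed, and then supply the barrier construction and the comparison argument in full.
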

In what follows $\|\cdot \|$ denotes the norm in Sobolev space
$W^{1,p}(\Omega)$. We will assume that $1<p<\infty$.

In the analysis of problem \eqref{Problem} we will use the positive cone
$$C_+=\{u\in C^1(\bar{\Omega}) \mid u(z)\geq 0 \tto{for all} z\in\bar{\Omega} \tto{and} \pd{u}{n_a}=0 \tto{on} \partial\Omega\}$$
and its interior given by
$$
\ttt{int } C_+=\{u\in C_+ \mid u(x)>0\tto{for all} z\in\bar{\Omega}\}.
$$
To deal with the boundary condition in problem \eqref{Problem}, we introduce the following function space framework, due to Casas-Fern\'andez \cite{CF1989}:
for $p' \in(1, \infty)$ such that $\frac{1}{p}+\frac{1}{p'}=1$ we introduce a separable Banach space
$$
W^{p'}(\mt{div}, \Omega)=\{v\in L^{p'}(\Omega, \rzecz^N)\mid \ttt{div } v\in L^1(\Omega)\},
$$
endowed with the norm
$$
\|v\|_{W^{p'}(\mt{div}, \Omega)}=\|v\|_{L^{p'}(\Omega, \rzecz^N)}+\|\ttt{div } v\|_{L^1(\Omega)}.
$$
If $\Omega$ has a Lipschitz boundary $\partial\Omega$, we have that the space $C^{\infty}(\bar{\Omega}, \rzecz^N)$ is dense in $W^{p'}(\mt{div}, \Omega)$ (see Lemma 1. in Casas-Fern\'andez \cite{CF1989}). We denote the space of
traces on $\partial \Omega$ by $W^{1/p', p}(\partial\Omega)$, endowed with the usual norm, and denote the trace of $u\in\wpo$ on $\partial\Omega$ by
$\gamma_0(u)$. Let us also consider the space
$$
T^p(\partial\Omega)=W^{1/p',p}(\partial\Omega)\cap L^{\infty}(\Omega)
$$
endowed with the norm $\|h\|_{T^p(\partial\Omega)}=\|h\|_{W^{1/p',p}(\partial\Omega)}+\|h\|_{ L^{\infty}(\Omega)}$. We denote the dual space of
$T^p(\partial\Omega)$ by $T^{-p'}(\partial\Omega)$ and the duality brackets by $\langle \cdot,\cdot \rangle_T$. We have
$$T^p(\partial\Omega)=\{\gamma_0(u)\mid u\in \wpo\cap L^{\infty}(\Omega)\}.$$
Also there exists a unique linear continuous map
$$
\gamma_n\colon W^{p'}(\mt{div}, \Omega)\to T^{-p'}(\partial\Omega),
$$
such that
$$
\gamma_n(v)=\left(v, n\right)_{\rzecz^N}, \ \forall v \in C^{\infty}(\bar{\Omega}, \rzecz^N).
$$
From this result one can obtain the following Green's formula.
\begin{twr}[1 in Casas-Fern\'andez \cite{CF1989}]\label{twr:green}
Let $a=(a_i)_{i=1}^N\colon\Omega\times(\rzecz\times\rzecz^N)\to\rzecz^N$ be a Carath\'eodory map, which satisfies
$$
  |a_i(z, s, \xi)|\leq k_1(|s|^{p-1}+|\xi|^{p-1})+k_2(z), \ i=1,\ldots, N
$$
with some constant $k_1>0$ and  a function $k_2\in L^{p'}(\Omega)$. Then if $u\in\wpo$ and $-\ttt{div } a(\cdot, u, \nabla u)\in L^1(\Omega)$, then there exists
a~unique element of $T^{-p'}(\partial\Omega)$, which by extension we denote $\partial u/\partial n_a$, satisfying the Green's formula:
$$
  \sum_{i=1}^N\comz{a_i(z, u(z), \nabla u(z))\frac{\partial v}{\partial z_i}}
  =\comz{-\ttt{div }a(z, u(z), \nabla u(z))v(z)}+\left< \frac{\partial u}{\partial n_a}, \gamma_0(v)\right>_T
$$
for all $v\in \wpo\cap L^{\infty}(\Omega) $.
\end{twr}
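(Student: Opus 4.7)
The plan is to reduce the claim to the abstract trace operator $\gamma_n\colon W^{p'}(\mt{div},\Omega)\to T^{-p'}(\partial\Omega)$ introduced just above. Setting $w(z):=a(z,u(z),\nabla u(z))$, I would define $\partial u/\partial n_a:=\gamma_n(w)$. For this to make sense, $w$ must lie in $W^{p'}(\mt{div},\Omega)$: the growth condition $|a_i(z,s,\xi)|\leq k_1(|s|^{p-1}+|\xi|^{p-1})+k_2(z)$ together with $u\in\wpo$ gives $w\in L^{p'}(\Omega,\rzecz^N)$, since $|u|^{p-1},|\nabla u|^{p-1}\in L^{p'}(\Omega)$ by the choice of $p'$ and the Sobolev embedding, while $k_2\in L^{p'}(\Omega)$ by hypothesis; the assumption $-\mt{div}\,w\in L^1(\Omega)$ supplies the second requirement. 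Uniqueness of $\partial u/\partial n_a$ in $T^{-p'}(\partial\Omega)$ is then automatic.

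For the Green identity, I would exploit the density of $C^\infty(\bar\Omega,\rzecz^N)$ in $W^{p'}(\mt{div},\Omega)$ recalled before the theorem: pick $\phi_n\in C^\infty(\bar\Omega,\rzecz^N)$ with $\phi_n\to w$ in $W^{p'}(\mt{div},\Omega)$. For smooth test functions $v\in C^1(\bar\Omega)$ the classical divergence theorem gives
$$
\sum_{i=1}^N\comz{\phi_{n,i}\pd{v}{z_i}}=-\comz{(\mt{div}\,\phi_n)\,v}+\langle\gamma_n(\phi_n),\gamma_0(v)\rangle_T,
$$
because on smooth vector fields $\gamma_n$ coincides with the classical normal trace $(\phi_n,n)_{\rzecz^N}$. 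Passing to the limit in $n$ is routine: the left-hand side converges by H\"older's inequality ($\phi_n\to w$ in $L^{p'}$ and $\nabla v\in L^p$), the first right-hand term converges because $\mt{div}\,\phi_n\to\mt{div}\,w$ in $L^1(\Omega)$ and $v\in L^\infty(\Omega)$, and the boundary pairing converges by continuity of $\gamma_n$ combined with $\gamma_0(v)\in T^p(\partial\Omega)$. This yields the identity for every smooth $v$.

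The remaining step, and what I expect to be the main obstacle, is to extend the formula from $v\in C^\infty(\bar\Omega)$ to arbitrary $v\in\wpo\cap L^\infty(\Omega)$. Plain Sobolev density would let $\|v_k\|_\infty$ blow up, which destroys the integrability of the term $(\mt{div}\,w)\,v_k$ since $\mt{div}\,w$ is only in $L^1(\Omega)$. One must therefore approximate by a truncated mollification that keeps the $L^\infty$-bound fixed, so that convergence in $\wpo$, convergence of traces in $T^p(\partial\Omega)$, and a uniform $L^\infty$-control hold simultaneously. With such a sequence, dominated convergence handles the $L^1$--$L^\infty$ pairing, H\"older's inequality handles the $L^{p'}$--$L^p$ pairing, and the continuity of $\gamma_0\colon\wpo\cap L^\infty(\Omega)\to T^p(\partial\Omega)$ recorded in the preliminaries closes the argument.
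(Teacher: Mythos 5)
The paper does not prove this statement at all: it is quoted verbatim as Theorem 1 of Casas--Fern\'andez \cite{CF1989}, with the trace operator $\gamma_n$ recalled beforehand precisely so that the result can be cited. So the comparison can only be with the standard argument your first two paragraphs reconstruct, and those paragraphs are sound: the growth condition gives $a(\cdot,u,\nabla u)\in L^{p'}(\Omega,\rzecz^N)$ (indeed $(p-1)p'=p$, no Sobolev embedding needed), the hypothesis $\ttt{div}\,a(\cdot,u,\nabla u)\in L^1(\Omega)$ puts the field in $W^{p'}(\mt{div},\Omega)$, one sets $\partial u/\partial n_a:=\gamma_n\big(a(\cdot,u,\nabla u)\big)$, and uniqueness follows from the stated surjectivity $T^p(\partial\Omega)=\{\gamma_0(v)\mid v\in\wpo\cap L^{\infty}(\Omega)\}$ (worth saying explicitly rather than calling it automatic).

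The genuine problem is your third paragraph. The ``main obstacle'' you describe is self-inflicted by the order of approximation, and the fix you propose does not work: to pass from smooth $v$ to general $v\in\wpo\cap L^{\infty}(\Omega)$ through the boundary pairing you would need $\gamma_0(v_k)\to\gamma_0(v)$ in the norm of $T^p(\partial\Omega)$, which contains the $L^{\infty}(\partial\Omega)$ norm; a uniform limit of traces of smooth functions is continuous on $\partial\Omega$, so no such approximation exists when $\gamma_0(v)$ is discontinuous, and weak convergence of the traces is exactly what you are trying to prove. The repair is to never approximate $v$: fix $v\in\wpo\cap L^{\infty}(\Omega)$ from the start and apply the divergence theorem to the $W^{1,1}$ vector field $v\,\phi_n$ with $\phi_n\in C^{\infty}(\bar\Omega,\rzecz^N)$, $\phi_n\to a(\cdot,u,\nabla u)$ in $W^{p'}(\mt{div},\Omega)$; this classical integration by parts is valid for Sobolev $v$ against smooth fields, and your own limit passage in the second paragraph (H\"older for the $L^{p'}$--$L^p$ term, $L^1$--$L^{\infty}$ for the divergence term, continuity of $\gamma_n$ against the \emph{fixed} element $\gamma_0(v)\in T^p(\partial\Omega)$ for the boundary term) then closes the proof with no further density argument in $v$.
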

To obtain the third, nodal solution of problem \eqref{Problem}, we will make use of the notion of a critical point of mountain pass type and a result known in the literature as \emph{mountain pass theorem}  (see for example  Gasi\'nski-Papageorgiou \cite[p. 649]{GP_NA}). To formulate this result, we introduce the following  definition of compactness-type condition.
\begin{defi}
Let $X$ be a Banach space and $\phi\in C^1(X)$. We say that $\phi$ satisfies the Cerami condition at level $c \in R$, if any sequence $\{x_n\}_{n\geq 1}\subseteq X$, such that
$$
\phi(x_n) \ \to \ c 
\quad \tto{and} \quad
(1+\|x_n\|_X)\phi'(x)_n \ \rightarrow \ 0 \ \tto{in} \ X^*,
$$
has a strongly convergent subsequence. If this is true at every level $c\in\rzecz$,
then we simply say that $\phi$ satisfies the Cerami condition.
\end{defi}
 For $\phi\in C^1(X)$ and $c\in\mathbb{R}$  we define the following sets:
\begin{gather*}
  \phi^c    \  := \ \big\{ x\in X:\ \phi(x)\leq c\big\},\\
  K_{\phi}  \  := \ \big\{ x\in X:\ \phi'(x)=0\big\},\\
  K_{\phi}^c  \  :=  \  \big\{ x\in K_{\phi}:\ \phi(x)=c\big\}.
\end{gather*}
\begin{twr}\label{twr:mpt}
Let $X$ be a Banach space. If $\phi\in C^1(X)$, $x_0, x_1\in X$ with $\|x_0-x_1\|_X>r>0$,
$$
\max\{\phi(x_0), \phi(x_1)\}\leq \inf\limits_{\|x-x_0\|_X=r}\phi(x)
$$
and $\phi$ satisfies the Cerami condition at level $c$, where
$$
c:=\inf\limits_{\gamma\in\Gamma}\max\limits_{t\in[0,1]}\phi(\gamma(t))
$$
and
$$
\Gamma:=\{\gamma\in C([0,1],X)\mid \gamma(0)=x_0, \gamma(1)=x_1\},
$$
then $c\geq \inf\limits_{\|x-x_0\|_X=r}\phi(x)$ and $c$ is a critical value of $\phi$ ($K^c_{\phi}\neq\emptyset$).
\end{twr}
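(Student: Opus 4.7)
The plan is to run the classical mountain pass argument in its Cerami variant. The statement splits into two essentially independent parts: the geometric inequality $c \geq \inf_{\|x-x_0\|_X = r} \phi(x)$, and the existence of a critical point at level $c$, proved by a deformation argument.

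For the inequality, I would fix an arbitrary $\gamma \in \Gamma$ and observe that the real-valued function $t \mapsto \|\gamma(t) - x_0\|_X$ is continuous on $[0,1]$, takes the value $0$ at $t = 0$, and the value $\|x_1 - x_0\|_X > r$ at $t = 1$. By the intermediate value theorem it attains the value $r$ at some $t_\ast \in (0,1)$, hence $\max_{t \in [0,1]} \phi(\gamma(t)) \geq \phi(\gamma(t_\ast)) \geq \inf_{\|x-x_0\|_X = r} \phi(x)$. Taking the infimum over $\gamma \in \Gamma$ gives the desired bound $c \geq \inf_{\|x-x_0\|_X = r} \phi(x)$.

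For the critical point I would argue by contradiction: suppose $K_\phi^c = \emptyset$. By a standard quantitative deformation lemma adapted to the Cerami condition (see, e.g., Gasi\'nski-Papageorgiou \cite{GP_NA}), there exist $\varepsilon_0 > 0$ and a continuous homotopy $\eta\colon [0,1] \times X \to X$ with $\eta(0,\cdot) = \mathrm{id}_X$, $\eta(s,x) = x$ whenever $|\phi(x)-c| \geq 2\varepsilon_0$, and $\eta(1, \phi^{c+\varepsilon_0}) \subseteq \phi^{c-\varepsilon_0}$. In the case $\max\{\phi(x_0), \phi(x_1)\} < c$ one shrinks $\varepsilon_0$ so that $\phi(x_i) < c - 2\varepsilon_0$ for $i = 0,1$, ensuring that $\eta(1,\cdot)$ fixes both endpoints. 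Picking $\gamma_0 \in \Gamma$ with $\max_t \phi(\gamma_0(t)) \leq c + \varepsilon_0$ from the definition of $c$, the curve $\tilde\gamma(t) := \eta(1, \gamma_0(t))$ then lies in $\Gamma$ and satisfies $\max_t \phi(\tilde\gamma(t)) \leq c - \varepsilon_0 < c$, contradicting the definition of $c$ as an infimum.

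The main obstacle is the deformation lemma itself. For Cerami sequences the classical Palais-Smale pseudo-gradient construction must be modified by weighting the pseudo-gradient vector field by $(1+\|x\|_X)^{-1}$, so that the induced flow exists on the whole interval $[0,1]$ despite the weaker compactness information in $(1+\|x_n\|_X)\phi'(x_n) \to 0$; one then uses the Cerami condition at $c$ to check that this weighted flow strictly decreases $\phi$ uniformly in a neighbourhood of the level $c$. A secondary subtlety is the borderline case $c = \max\{\phi(x_0), \phi(x_1)\}$, in which the endpoints cannot be strictly separated from $c$ and the homotopy cannot be assumed to fix them; here one refines the argument to exhibit a critical point on the sphere $\{x : \|x-x_0\|_X = r\}$, on which $\phi$ is forced (by the geometric step combined with the hypothesis) to be bounded below by $c$.
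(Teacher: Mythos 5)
Your outline is correct and follows exactly the route the paper itself relies on: the paper does not prove Theorem~\ref{twr:mpt} but quotes it from Gasi\'nski--Papageorgiou \cite{GP_NA}, where the argument is precisely your two steps --- the intermediate value theorem for the geometric inequality, and a deformation theorem built from a pseudo-gradient field weighted by $(1+\|x\|_X)^{-1}$ so that the Cerami condition at level $c$ suffices, with the borderline case $c=\max\{\phi(x_0),\phi(x_1)\}$ handled by locating the critical point on the sphere $\{\|x-x_0\|_X=r\}$. The only caveat is that the Cerami deformation lemma and that limit-case refinement are left as cited black boxes in your sketch, but this matches the level of detail at which the paper itself invokes the result.
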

\begin{defi}[6.98 in Motreanu-Motreanu-Papageorgiou \cite{MotreanuMotreanuPapageorgiou2014}]
Let $X$ be a Banach space, $\phi\in C^1(X)$ and let $x\in X$ be a critical point of $\phi$ ($x\in K_{\phi}$). We say that $x$ is of \emph{mountain pass type}, if, for any open neighbourhood $U$ of $x$, the set $\{y\in U \mid \phi(y)<\phi(x)\}$ is nonempty and not path-connected.
\end{defi}
Critical groups of $\phi\in C^1(X)$ at an isolated critical point $u\in K^c_{\phi}$ are defined by
$$
C_k(\phi,u)=H_k(U\cap \phi^c,U\cap \phi^c\backslash \{u\}), \quad \forall k\geq 0,
$$
where $c:=\phi(u)$ and $U\subseteq X$ is an open neighbourhood of $u$ such that $K_{\phi}\cap \phi^c \cap U=\{u\}$.  The excision property of the singular homology implies that the preceding
defnition of critical groups is independent of the particular choice of the
neighborhood $U$ (see Definition 6.43 and Remark 6.44 in Motreanu-Motreanu-Papageorgiou \cite{MotreanuMotreanuPapageorgiou2014}).
\begin{stwr}[6.100 in Motreanu-Motreanu-Papageorgiou \cite{MotreanuMotreanuPapageorgiou2014}]\label{stwr:cg1_mpt}
Let $X$ be a reflexive Banach space,
$\phi\in C^1(X)$, and $u\in K_{\phi}$
isolated critical point with $c:=\phi(u)$ isolated critical value in
$\phi(K_{\phi})$. If $u$ is of mountain pass type, then
$C_1(\phi,u)\neq 0$.
\end{stwr}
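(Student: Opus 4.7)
The approach is to compute $C_1(\phi,u)=H_1(W, W\setminus\{u\})$, where $W:=U\cap\phi^c$ and $U$ is a suitably small open neighbourhood of $u$, and to extract a nonzero element from the long exact sequence of the pair. Since $u$ is isolated in $K_{\phi}$ and $c:=\phi(u)$ is isolated in $\phi(K_{\phi})$, one can pick $U$ so that $K_{\phi}\cap U=\{u\}$ and no critical value other than $c$ lies in $[c-\varepsilon,c+\varepsilon]$ for some $\varepsilon>0$.

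Consider the tail of the long exact sequence of the pair $(W,W\setminus\{u\})$ in singular homology:
$$
H_1(W)\longrightarrow H_1(W, W\setminus\{u\})\longrightarrow H_0(W\setminus\{u\})\xrightarrow{j_*}H_0(W)\longrightarrow 0.
$$
By exactness, $C_1(\phi,u)\neq 0$ whenever $\ker j_*\neq 0$. Thus the target reduces to two topological claims: \emph{(a)} $W$ is path-connected, so $H_0(W)\cong\mathbb{Z}$; and \emph{(b)} $W\setminus\{u\}$ has at least two path-components, which forces $\ker j_*\neq 0$.

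For \emph{(b)} I would invoke the mountain pass type hypothesis applied to $U$: the set $U\cap\{\phi<c\}$ is nonempty and not path-connected. Because $u$ is the unique critical point in $U$, every $y\in U\cap\phi^{-1}(c)\setminus\{u\}$ is a regular point, so a localised negative pseudo-gradient vector field generates a flow that decreases $\phi$ and pushes such $y$ into $\{\phi<c\}$ in arbitrarily short time while remaining inside $U$. This yields a strong deformation retraction of $W\setminus\{u\}$ onto $U\cap\{\phi<c\}$, so the two spaces have the same number of path-components. For \emph{(a)} I would use the Second Deformation Theorem, applied locally in $U$ (using that $u$ is the only critical point at level $c$ and that $c$ is an isolated critical value), to produce a strong deformation retraction of $W$ onto $\{u\}$; this makes $W$ contractible, in particular path-connected.

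The main obstacle is the local implementation of the flow arguments in \emph{(a)} and \emph{(b)}: the pseudo-gradient construction and the Second Deformation Theorem are classically stated on the whole space $X$, so to restrict them to $U$ one must work with cut-off pseudo-gradient vector fields on $U\setminus\{u\}$ and choose $\varepsilon$ and $U$ carefully so that trajectories do not escape $U$ on the relevant time scale. Once this technical point is settled, the exact sequence immediately yields $C_1(\phi,u)\neq 0$.
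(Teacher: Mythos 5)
First, note that the paper does not prove this statement at all: it is quoted, with attribution, as Proposition 6.100 of Motreanu--Motreanu--Papageorgiou \cite{MotreanuMotreanuPapageorgiou2014} (Hofer's theorem on critical points of mountain pass type), so your attempt can only be compared with the proof given there. Your homological skeleton is the standard one and is correct as far as it goes: with $W=U\cap\phi^c$, exactness of $H_1(W,W\setminus\{u\})\to H_0(W\setminus\{u\})\xrightarrow{j_*}H_0(W)$ reduces the claim to $\ker j_*\neq 0$. The genuine gaps are in your two topological claims (a) and (b), which is where the entire content of the theorem sits.

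Claim (a) is both unjustified and stronger than what is true or needed. The Second Deformation Theorem compares global sublevel sets $\phi^a$ and $\phi^b\setminus K_\phi^b$ and requires a Palais--Smale/Cerami-type compactness condition; it does not localize to give a strong deformation retraction of $U\cap\phi^c$ onto $\{u\}$, and contractibility (or even path-connectedness) of $U\cap\phi^c$ for small $U$ cannot be taken for granted: besides the component through $u$, the set $U\cap\phi^c$ may have components entering $U$ through $\partial U$ at every scale, and no critical point of $\phi$ is available inside $U$ to rule them out. Observe that $\ker j_*\neq 0$ is equivalent to the statement that the path component of $u$ in $W$, punctured at $u$, is disconnected; so what must actually be proved is that \emph{two} distinct path components of $U\cap\{\phi<c\}$ can be joined to $u$ inside $U\cap\phi^c$ while remaining in distinct path components of $U\cap\phi^c\setminus\{u\}$. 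Establishing this -- by exploiting that the mountain pass property holds for \emph{every} neighbourhood of $u$, working with nested neighbourhoods and truncated pseudo-gradient flows whose trajectories are shown not to escape -- is the heart of the proof in the cited reference. The same applies to (b): pushing level-$c$ points strictly below $c$ without leaving $U$, which you yourself defer as ``the main obstacle,'' is exactly the delicate point (a cut-off vanishing near $\partial U$ destroys the descent there, while without a cut-off trajectories may exit $U$), and the flow/deformation machinery you invoke needs a (PS) or (C) condition that is not among the hypotheses as stated here but is part of the framework in which the book proves the proposition. So your proposal is a correct outline of the known strategy, but it is incomplete precisely at the steps that make the result nontrivial.
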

For a Carath\'eodory function $f\colon\Omega\times\rzecz\to\rzecz$, let
$N_f\colon\wpo\to\mathcal{M}(\Omega,\rzecz)$, where $\mathcal{M}(\Omega,\rzecz)$ is the set of all measurable functions on $\Omega$, be defined by
$$
N_f(u)\colon \Omega \ni x\mapsto N_f(u)(x)=f(x,u(x))\in \rzecz \ \tto{for} u\in\wpo.
$$
% We will also denote the duality brackets for
%$(W^{1,p}_0(\Omega)^*, W^{1,p}_0(\Omega))$ by $\left<\cdot,\cdot\right>_0$. 
%%%%%%%%%%%%%%%%%%%%%%%%%%%%%%%%%%%%%%%%%%%%%%%%%%%%%%%%%%%%%%%%%%%%%%%%%%%%%%%%%%%%%%%%%%%%%%%%%%%%%%%%%%%%%%%%%%%%%%%%%555
%%%%%%%%%%%%%%%%%%%%%%%%%%%%%%%%%%%%%%%%%%%%%%%%%%%%%%%%%%%%%%%%%%%%%%%%%
%%%%%%%%%%%%%%%%%%%%%%%%%%555
\section{Problem setting}\label{sec:setting}
In this section we formulate our assumptions and provide some examples. We end the section with the main result of this paper.
 We  will consider the following hypotheses on the map $a$:
\begin{assum}{H(a)}{ass:ha1:zero} There exists a function $a_0\colon[0,\infty)\to[0,\infty)$ with  $a_0\in C^1(0,\infty)$,  $a_0\in C([0,\infty))$  and $a_0(t)>0$ for $t>0$ such that
\begin{gather}
a(y)=a_0(|y|)y, \quad \tto{for all}y\in\rzecz^N\label{eq:ass:ha1:zero}.
\end{gather}
\end{assum}
\begin{assum}{H(a)}{ass:ha1:nabla} There exist some constants $\delta, c_0, c_1, c_2, c_3>0$,  $q\in(1,p)$ and a~function $h\in C^1(0,\infty)$  satisfying
\begin{gather}
\delta<\frac{th'(t)}{h(t)}\leq c_0\tto{ for all }t>0,\label{eq:ass:ha1:h1}\\
c_1 t^{p-1} \leq h(t) \leq c_2(t^{q-1}+t^{p-1}) \tto{for all }t>0\label{eq:ass:ha1:h2},
\end{gather}
such that
\begin{equation}
\label{eq:ass:ha1:nabla} |\nabla a(y)|\leq c_3\frac{h(|y|)}{|y|}\tto{for all}y\in\rzecz^N\backslash \{0\}.
\end{equation}
\end{assum}
\begin{assum}{H(a)}{ass:ha1:nabla_sc} For all $y, \xi \in\rzecz^N$ such that $y\neq 0$ we have
\begin{equation}
 \left(\nabla a(y)\xi,\xi \right)_{\rzecz^N}\geq \frac{h(|y|)}{|y|}|\xi|^2, \label{eq:ass:ha1:nabla_sc}
\end{equation}
where $h\in C^1(0,\infty)$ is as in \emph{$\fullref{ass:ha1:nabla}$}. \end{assum}
\begin{assum}{H(a)}{ass:ha1:mu} There exists some constants   $\tau \in(1, p]$ and $\mu \in(1,q]$ such that 
\begin{equation}
\label{eq:ass:ha1:tau1}
\tto{the map} t\to G_0(t^{1/\tau}) \tto{is convex on} (0,\infty)  
\end{equation}
and
\begin{equation}
\label{eq:ass:ha1:mu1}
\lim\limits_{t\to 0^{+}}\frac{G_0(t)}{t^{\mu}}=0,
\end{equation}
where
$$G_0(t)=\int\limits_0^ta_0(s)s\ud s.$$
\end{assum}
\begin{assum}{H(a)}{ass:ha1:apG}
For any $y\in\rzecz^N$, we have that
\begin{equation}\label{eq:ass:ha1:eq:apG}
\left(a(y), y\right)_{\rzecz^N}\leq p\ G(y).
\end{equation}
\end{assum}
\begin{stwr}\label{stwr:propG-a}
Let
$$
G(y):=G_0(|y|), \quad y\in\rzecz^N.
$$ If assumptions \emph{$\fullref{ass:ha1:zero}$}--\emph{$\fullref{ass:ha1:mu}$} hold, 
then  $G$ is strictly convex, $G(0)=0$ and $\nabla G(y)=a(y)$ for $y\in\rzecz^N\backslash \{0\}$, thus $a$ is strictly monotone. Moreover,  there exists $c_4>0$ such that
    \begin{equation}\label{eq:w1_a1b}
    |a(y)|\leq c_4(|y|^{q-1}+|y|^{p-1})\quad \tto{for all}y\in\rzecz^N;
    \end{equation}
   also
    \begin{equation}\label{eq:w1_a1c}
    \left(a(y), y \right)_{\rzecz^N}\geq \frac{c_1}{p-1}|y|^p\quad \tto{for all}y\in\rzecz^N.
    \end{equation}
Inequalities \eqref{eq:w1_a1b} and \eqref{eq:w1_a1c}, together with the Cauchy-Schwarz inequality, imply that
\begin{equation}\label{eq:osz}
\frac{c_1}{p(p-1)}|y|^p\leq G(y)\leq c_4(|y|^q+|y|^p)\quad \tto{for all}y\in\rzecz^N.
\end{equation}
Also, the nonlinear map
  $A\colon \wpo\longrightarrow \wpo^*$ defined by
\begin{equation}\label{gwiazdka}
\left<A(u), v\right>=\int_{\Omega}\left(a(\nabla u(x)),\nabla v(x))\right)_{\rzecz^N}\ud x, \quad u, v\in \wpo,
\end{equation}  
  is bounded, continuous
  and of type $(S)_+$,
  i.e., if
$$
  u_n
  \ \longrightarrow\ u
  \quad\textrm{weakly in} \ \wpo
$$
  and
$$
  \limsup_{n\rightarrow +\infty} \big\langle A(u_n),\ u_n-u\big\rangle
  \ \leq\ 0,
$$
  then $u_n\longrightarrow u$ in $\wpo$.
\end{stwr}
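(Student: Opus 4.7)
The plan is to verify the pointwise facts about $G$ and $a$ first, then derive the size estimates by integrating along rays, and finally read off the operator properties of $A$.

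The elementary facts come first. Since $G(y)=G_0(|y|)=\int_0^{|y|}a_0(s)s\,\mathrm{d}s$ and $a_0$ is continuous at $0$, $G(0)=0$ at once. For $y\neq 0$ the chain rule gives $\nabla G(y)=G_0'(|y|)\,y/|y|=a_0(|y|)y=a(y)$, which extends continuously to $y=0$. Hypothesis \eqref{eq:ass:ha1:nabla_sc} says that $\nabla a$ is positive definite on $\mathbb{R}^N\setminus\{0\}$, so $a=\nabla G$ is strictly monotone there, and continuity at the origin promotes this to strict convexity of $G$ and strict monotonicity of $a$ on all of $\mathbb{R}^N$.

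Next I would derive \eqref{eq:w1_a1b} and \eqref{eq:w1_a1c} by writing $a(y)=\int_0^1\nabla a(ty)\,y\,\mathrm{d}t$ for $y\neq 0$. Pairing with $y$ and applying \eqref{eq:ass:ha1:nabla_sc} together with the lower bound $h(t)\geq c_1 t^{p-1}$ in \eqref{eq:ass:ha1:h2} gives
\begin{equation*}
(a(y),y)_{\mathbb{R}^N}\geq \int_0^1\frac{h(t|y|)}{t|y|}|y|^2\,\mathrm{d}t\geq c_1|y|^p\int_0^1 t^{p-2}\,\mathrm{d}t=\frac{c_1}{p-1}|y|^p,
\end{equation*}
which is \eqref{eq:w1_a1c}. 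Taking Euclidean norms and using \eqref{eq:ass:ha1:nabla} with the upper bound on $h$ gives \eqref{eq:w1_a1b} with a suitable $c_4$. For \eqref{eq:osz}, specialising \eqref{eq:w1_a1c} to $y=se$ with $|e|=1$ yields $a_0(s)s\geq \frac{c_1}{p-1}s^{p-1}$, and integrating on $[0,|y|]$ produces the lower bound. For the upper bound, Cauchy--Schwarz gives $a_0(s)s^2=(a(se),se)_{\mathbb{R}^N}\leq|a(se)|\,s\leq c_4(s^q+s^p)$, and integrating $a_0(s)s\leq c_4(s^{q-1}+s^{p-1})$ yields the upper bound after a harmless adjustment of the constant.

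Finally, for the operator $A$: boundedness is immediate from \eqref{eq:w1_a1b} and H\"older's inequality, using the continuous embedding $W^{1,p}(\Omega)\hookrightarrow W^{1,q}(\Omega)$ on the bounded domain $\Omega$. Continuity of $A$ reduces, via the same growth bound, to continuity of the Nemytskii operator $\xi\mapsto a\circ\xi$ from $L^p(\Omega,\mathbb{R}^N)$ into $L^{p'}(\Omega,\mathbb{R}^N)$, a consequence of continuity of $a$. The $(S)_+$ property is the substantive step. Given $u_n\rightharpoonup u$ in $W^{1,p}(\Omega)$ with $\limsup\langle A(u_n),u_n-u\rangle\leq 0$, monotonicity of $a$ together with the fact that the cross term $\int_\Omega(a(\nabla u),\nabla u_n-\nabla u)_{\mathbb{R}^N}\,\mathrm{d}x$ tends to $0$ (by weak $L^p$-convergence against $a(\nabla u)\in L^{p'}$) yields
\begin{equation*}
0\leq\int_\Omega(a(\nabla u_n)-a(\nabla u),\nabla u_n-\nabla u)_{\mathbb{R}^N}\,\mathrm{d}x\longrightarrow 0.
\end{equation*}
Passing to a subsequence, the non-negative integrands tend to $0$ a.e., and strict monotonicity forces $\nabla u_n\to\nabla u$ a.e.\ on $\Omega$. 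Upgrading this to strong convergence in $L^p(\Omega,\mathbb{R}^N)$ --- and hence of $u_n$ in $W^{1,p}(\Omega)$ --- then proceeds by a Brezis--Lieb-type argument: the Taylor form of the convex inequality for $G$ combined with $\int(a(\nabla u_n),\nabla u_n-\nabla u)_{\mathbb{R}^N}\,\mathrm{d}x\to 0$ first yields $\int G(\nabla u_n)\to\int G(\nabla u)$, a generalised Brezis--Lieb lemma then gives $\int G(\nabla u_n-\nabla u)\to 0$, and the lower bound of \eqref{eq:osz} converts this into $\nabla u_n\to\nabla u$ in $L^p$. I expect this last step, essentially the same $(S)_+$ argument worked out for the closely related operator in Gasi\'nski--Papageorgiou \cite{cGasinskiPapageorgiou2008a}, to require the most care.
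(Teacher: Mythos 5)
Your proof is correct, and it follows the same route as the argument the paper points to (the proposition is not proved in this paper; it is deferred to \cite{Klimczak2015a} and to Proposition 3.1 of Gasi\'nski--Papageorgiou \cite{cGasinskiPapageorgiou2008a}): the ray representation $a(y)=\int_0^1\nabla a(ty)\,y\,\ud t$ together with \eqref{eq:ass:ha1:nabla}, \eqref{eq:ass:ha1:nabla_sc} and \eqref{eq:ass:ha1:h2} gives \eqref{eq:w1_a1b}--\eqref{eq:w1_a1c} (the integrals at $t=0$ converge because $q>1$), integration along rays gives \eqref{eq:osz}, and the $(S)_+$ property rests on monotonicity plus a.e.\ convergence of the gradients. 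The one place where you genuinely diverge is the final upgrade from a.e.\ convergence of $\nabla u_n$ to strong $L^p$ convergence: the cited proofs extract uniform integrability of $\{|\nabla u_n|^p\}$ from $\int_\Omega\left(a(\nabla u_n),\nabla u_n-\nabla u\right)_{\rzecz^N}\ud x\to 0$ and the coercivity estimate \eqref{eq:w1_a1c}, and then conclude with Vitali's theorem, whereas you pass through $\int_\Omega G(\nabla u_n)\ud x\to\int_\Omega G(\nabla u)\ud x$ (convexity plus weak lower semicontinuity of the convex integral functional) and a generalized Brezis--Lieb lemma; both endings are valid and of comparable length, yours trading the measure-theoretic uniform-integrability argument for the Brezis--Lieb machinery. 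Two small points you should make explicit in a full write-up: deducing $\nabla u_n\to\nabla u$ a.e.\ from $\left(a(\nabla u_n)-a(\nabla u),\nabla u_n-\nabla u\right)_{\rzecz^N}\to 0$ a.e.\ first requires observing that \eqref{eq:w1_a1c} and \eqref{eq:w1_a1b} force $\{\nabla u_n(z)\}_n$ to be bounded at such points before strict monotonicity identifies the limit; and strong convergence in $\wpo$ also uses $u_n\to u$ in $L^p(\Omega)$, which comes from the compactness of the embedding $\wpo\subseteq L^p(\Omega)$ on the bounded smooth domain.
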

The proof of Proposition \ref{stwr:propG-a} can be found in \cite{Klimczak2015a} (see also  Gasi\'nski-Papageorgiou \cite{cGasinskiPapageorgiou2008a}, Proposition 3.1).

\begin{przykl}\label{exampl:a}
  Here we present some examples of maps satisfying hypotheses $H(a)$:
\begin{enumerate}[(a)]
\item  $a(y)=|y|^{p-2}y$ with $1<p<\infty$. This map corresponds to the $p$-Laplacian operator defined by
$$
\Delta_pu \ = \ \mt{div}(|\nabla u|^{p-2}\nabla u), \quad u\in \wpo.
$$
\item $a(y)=|y|^{p-2}y+|y|^{q-2}y$ with $1<q<p<\infty$. This map corresponds to the $(p,q)$-differential operator defined by
$$
\Delta_pu+\Delta_qu, \quad u\in \wpo.
$$
\item $a(y)=(1+|y|^2)^{(p-2)/2}y$ with $1<p<\infty$. This map corresponds to the generalized $p$-mean curvature differential operator defined by
$$
\mt{div}((1+|\nabla u |^2)^{(p-2)/2}\nabla u), \quad u\in \wpo.
$$
\end{enumerate}
  \end{przykl}
  \begin{uw}
The main aim of the hypotheses $H(a)$ is to unify several operators operators, which are widely examined due to their applications in physics (see for example \cite{aDrabek2007a} for the $p$-Laplacian or \cite{aBenciAvenia2000a} for the $(p,q)$-differential operator). This kind of the hypotheses comes from the regularity theorem of Lieberman (Theorem \ref{twr:liebe}) - in the case $A(z, \xi, y)=a(y)$,  the assumptions \eqref{eq:liebea} -- \eqref{eq:liebec} simplify to the following two conditions:
\begin{subequations}
\begin{gather}
 \left(\nabla a(y)x, x)\right)_{\rzecz^N}
 \ \geq \
 \frac{h(|y|)}{|y|}|x|^2, \quad  y\neq 0_N; \label{eq:liebea_s}\\
 |\frac{\partial}{\partial y_j} a_i(y)|
 \ \leq\ 
 \Lambda\frac{h(|y|)}{|y|},  \quad  y\neq 0_N. \label{eq:liebeb_s}
\end{gather}
\end{subequations}
Thus, assuming on the map $a$ hypotheses \emph{$\fullref{ass:ha1:nabla}$} -- \emph{$\fullref{ass:ha1:nabla_sc}$}, for a~suitable reaction term $f$ we can easily obtain that all bounded weak solutions of problem \eqref{Problem} actually have locally H\"older continuous first derivative.
\end{uw}	
Let $f_0\colon\Omega\times\rzecz\to\rzecz$ be a Carath\'eodory function with subcritical growth in the second variable, i.e. there exisits $\alpha\in L^{\infty}(\Omega)_+$ and $\beta>0$ such that
$$
|f_0(z, \xi)|
\ \leq \ 
\alpha(z)+ \beta |\xi|^{r-1}, \ \tto{for a.e.} z\in \Omega, \tto{all} \xi\in\rzecz
$$ 
with $r\in[1,p^*)$, where
$$
p^*
  \ =\
\left\{
  \begin{array}{lll}
  \displaystyle \frac{Np}{N-p} & \textrm{if} & p<N,\\
    +\infty & \textrm{if} & p\geq N
  \end{array}
\right.
$$  
is the Sobolev critical exponent. Let $F_0(z, \xi)=\int_0^{\xi}f_0(z, s)\ud s$ and define the $C^1$-functional $\phi_0\colon\wpo\to\rzecz$ by
$$ 
\phi_0(u)=\comz{G(\nabla u(z))}-\comz{F_0(z, u(z))}, \ u\in\wpo.
$$
With minor modifications, the proof of Theorem 3.1 in Motreanu-Papageorgiou \cite{MotreanuPapageorgiou2011} can be adapted to obtain the following result.
\begin{twr}\label{twr:minimizers} If hypotheses \emph{$\fullref{ass:ha1:zero}$}--\emph{$\fullref{ass:ha1:mu}$} hold and 
if $u_0\in\wpo$ is a local $C^1(\bar{\Omega})$-minimizer of $\phi_0$, i.e. there exists $r_0>0$ such that
$$
\phi_0(u_0)\leq \phi_0(u_0+h) \ \tto{for all} \ h\in C^1(\bar{\Omega}), \|h\|_{C^1(\bar{\Omega})}\leq r_0,
$$
then $u_0\in C^1(\bar{\Omega})$ and it is a $\wpo$-minimizer of $\phi_0$, i.e. there exists $r_1>0$ such that
$$
\phi_0(u_0)\leq \phi_0(u_0+h) \ \tto{for all} \ h\in \wpo, \|h\|\leq r_1.
$$
\end{twr}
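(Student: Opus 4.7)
The plan is to proceed by contradiction, following the strategy of Motreanu--Papageorgiou cited in the statement. Suppose $u_0$ is a local $C^1(\bar{\Omega})$-minimizer of $\phi_0$ but not a local $\wpo$-minimizer. Then there is a sequence $\{w_n\}_n \subseteq \wpo$ with $w_n \to u_0$ in $\wpo$ and $\phi_0(w_n) < \phi_0(u_0)$ for every $n$.

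For each $n$ large enough that $\|w_n - u_0\| \leq 1/n$, I would consider the closed, convex, bounded (hence weakly closed) set
\[
\bar{B}_n := \bigl\{u \in \wpo : \|u - u_0\| \leq 1/n\bigr\}.
\]
By the convexity of $G$ from Proposition \ref{stwr:propG-a}, the functional $u \mapsto \comz{G(\nabla u)}$ is sequentially weakly lower semicontinuous on $\wpo$; by the compact embedding $\wpo \hookrightarrow L^r(\Omega)$ for $r<p^*$ and the subcritical growth of $f_0$, the map $u \mapsto \comz{F_0(z,u)}$ is sequentially weakly continuous. Hence $\phi_0$ restricted to $\bar{B}_n$ is weakly lower semicontinuous, and Theorem \ref{twr:minicoer} applies (the boundedness of $\bar{B}_n$ replaces weak coercivity) to yield a minimizer $v_n \in \bar{B}_n$. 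By construction $\phi_0(v_n) \leq \phi_0(w_n) < \phi_0(u_0)$, so $v_n \neq u_0$, and $v_n \to u_0$ in $\wpo$.

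Next I would derive the Euler--Lagrange equation for $v_n$. If $v_n$ lies in the interior of $\bar{B}_n$, then $A(v_n)=N_{f_0}(v_n)$ in $\wpo^*$. If $\|v_n - u_0\| = 1/n$, the Lagrange multiplier rule furnishes $\lambda_n \geq 0$ with $A(v_n) - N_{f_0}(v_n) + \lambda_n\,J'(v_n - u_0) = 0$, where $J(u) = \tfrac{1}{p}\|u\|^p$. Combining with Green's formula (Theorem \ref{twr:green}), $v_n$ is in either case a weak solution of a Neumann problem
\[
-\ttt{div }a(\nabla v_n) = \tilde f_n(z, v_n) \tto{in}\Omega, \qquad \pd{v_n}{n_a} = 0 \tto{on} \partial\Omega,
\]
with $\tilde f_n$ of controlled (subcritical) growth, since the extra Lagrange term is dominated by $|v_n - u_0|^{p-1}$ and $\lambda_n$ stays bounded as $v_n \to u_0$ in $\wpo$.

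The core of the argument is a uniform a~priori bound. A Moser iteration based on \eqref{eq:w1_a1c} and the subcritical growth of $\tilde f_n$ yields $M_0>0$ independent of $n$ with $\|v_n\|_{L^\infty(\Omega)} \leq M_0$. With this $L^\infty$-bound in hand, the structural hypotheses $\fullref{ass:ha1:nabla}$ and $\fullref{ass:ha1:nabla_sc}$ verify conditions \eqref{eq:liebea}--\eqref{eq:liebed} of Theorem \ref{twr:liebe} with constants independent of $n$, so $v_n \in C^{1,\beta}(\bar{\Omega})$ and $\|v_n\|_{C^{1,\beta}(\bar{\Omega})} \leq C$ for some $\beta \in (0,1)$ and $C>0$ independent of $n$. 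By the compact embedding $C^{1,\beta}(\bar{\Omega}) \hookrightarrow C^1(\bar{\Omega})$ and uniqueness of the $\wpo$-limit, $v_n \to u_0$ in $C^1(\bar{\Omega})$. Taking $n$ so large that $\|v_n - u_0\|_{C^1(\bar{\Omega})} \leq r_0$, the local $C^1$-minimality of $u_0$ gives $\phi_0(u_0) \leq \phi_0(v_n)$, contradicting $\phi_0(v_n) < \phi_0(u_0)$. The hard part is to secure the uniform $L^\infty$ and $C^{1,\beta}$ bounds: the Moser iteration has to absorb the Lagrange multiplier term uniformly in $n$, and Lieberman's constants must depend only on the structural data of $a$ and on $M_0$, not on $n$.
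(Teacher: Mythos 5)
Your strategy is the intended one: the paper does not prove this theorem itself but adapts Theorem 3.1 of Motreanu--Papageorgiou \cite{MotreanuPapageorgiou2011}, whose proof is exactly the scheme you outline (minimize on small balls $\bar{B}_n$ via Theorem \ref{twr:minicoer}, Lagrange multiplier rule on the sphere, uniform Moser and Lieberman estimates, convergence $v_n\to u_0$ in $C^1(\bar{\Omega})$, contradiction with the $C^1$-minimality). However, there is a genuine gap at the start: the conclusion $u_0\in C^1(\bar{\Omega})$ is never established. In the adapted argument this is Step 1 -- since $u_0$ is a local $C^1(\bar{\Omega})$-minimizer, $\frac{\ud}{\ud t}\phi_0(u_0+th)\big|_{t=0}=0$ for every $h\in C^1(\bar{\Omega})$, and density of $C^1(\bar{\Omega})$ in $\wpo$ gives $\phi_0'(u_0)=0$; Moser iteration and Theorem \ref{twr:liebe} then yield $u_0\in C^{1,\beta}(\bar{\Omega})$. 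This step is not optional even inside your contradiction argument: when $v_n$ sits on the sphere, the constrained Euler--Lagrange equation has principal part $y\mapsto a(y)+\lambda_n|y-\nabla u_0(z)|^{p-2}\left(y-\nabla u_0(z)\right)$, which depends on $z$ through $\nabla u_0(z)$, and Lieberman's hypothesis \eqref{eq:liebec} requires this coefficient to be H\"older continuous in $z$; without knowing beforehand that $\nabla u_0$ is H\"older, the uniform $C^{1,\beta}$ bound for $v_n$ -- the crux of the proof -- cannot be run. (The final comparison $\phi_0(u_0)\leq\phi_0(v_n)$ also needs $v_n-u_0\in C^1(\bar{\Omega})$, so both regularities are used.)

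The second weak point is the assertion that ``$\lambda_n$ stays bounded as $v_n\to u_0$ in $\wpo$'': this is stated without proof and is not automatic. Testing the constrained equation with $v_n-u_0$ only gives $\lambda_n\|v_n-u_0\|^{p}\leq c\,\|v_n-u_0\|$, i.e. an estimate of order $n^{p-1}$, which does not prevent blow-up. In the proofs this is handled either by a case distinction (a subsequence with $\lambda_n\to 0$ versus $\lambda_n$ bounded away from zero) or by normalizing the equation (dividing by $1+\lambda_n$) so that the structure conditions \eqref{eq:liebea}--\eqref{eq:liebed} hold with constants independent of $n$ whether or not $\lambda_n$ is bounded. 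Until these two points -- the preliminary regularity of $u_0$ and the treatment of the multipliers -- are supplied, the uniform $L^{\infty}$ and $C^{1,\beta}$ estimates you correctly identify as the hard part are not secured; the remaining steps of your outline are sound and coincide with the cited argument.
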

Our assumptions on the Carath\'eodory map $f\colon \Omega\times\rzecz\to\rzecz$ are the following
\setcounter{Ass}{0}
\begin{assum}{H(f)}{ass:hf1:ainf} For a.e. $z\in\Omega$, $f(z,0)=0$ and for every $\rho>0$ there exists $a_{\rho}\in L^{\infty}(\Omega)_+$ such that for a.e.~$z\in\Omega$ and every $\xi\in\rzecz$ we have
\begin{equation}  |\xi|\leq \rho \ \Rightarrow \ |f(z,\xi)| \leq a_{\rho}(z); \label{eq:ass:hf1:ainf}
\end{equation}
\end{assum}
\begin{assum}{H(f)}{ass:hf1:wc}
There exist functions $w_{\pm}\in\wpo\cap C(\bar{\Omega})$ and constants $c_-$, $c_+$ such that
%\begin{subequations}\label{eq:ass:hf1:wc}
\begin{gather}
w_-(z)\leq c_-<0<c_+\leq w_+(z) \tto{for all} z\in\bar{\Omega};\label{eq:ass:hf1:wc-c+}\\
f(z,w_+(z) )\leq 0\leq f(z,w_-(z) ) \tto{for a.e.} z\in\Omega;\\
A(w_-)\leq 0 \leq A(w_+) \tto{in}W^{1,p}(\Omega)^* \label{eq:Aw-0Aw+},
\end{gather}
%\end{subequations}
where $A$ is defined by \eqref{gwiazdka} and by \eqref{eq:Aw-0Aw+} we mean that for any $u\in\wpo$ with $u\geq 0$, the following inequalities hold:
$$
\left<A(w_-), u\right> \leq 0 \ \tto{and} \ \left<A(w_+), u\right> \geq 0.
$$
\end{assum}
\begin{assum}{H(f)}{ass:hf1:eq:flF}
There exists $\delta_0>0$, such that for a.e. $z\in\Omega$  and for all $\xi\in\rzecz$ such that $0<|\xi|\leq \delta_0$, we have
\begin{equation}\label{eq:ass:hf1:eq:flF}
0< f(z, \xi)\xi\leq \mu F(z, \xi) \quad \tto{and} \quad \essinf\limits_{\Omega} F(\cdot, \delta_0)>0
\end{equation}
with $F(z,\xi)=\int\limits_0^\xi f(z,t)\ud t$ and $\mu$ is as in \emph{$\fullref{ass:ha1:mu}$};
\end{assum}
\begin{assum}{H(f)}{ass:hf1:eq:fgeqx}
There exist $\widehat{c}_0, \widehat{c}_1>0$, and $s, r\in\rzecz$ with $s\neq r$ and $s<\mu$, $s\leq\tau\leq p\leq r < p^*$ (where $\tau$ and $\mu$ are the
same as in \emph{$\fullref{ass:ha1:mu}$}) such that
\begin{equation}\label{eq:ass:hf1:eq:fgeqx}
f(z,\xi)\xi\geq \widehat{c}_0|\xi|^s-\widehat{c}_1|\xi|^r \quad \tto{for all}\xi\in\rzecz \tto{and for a.e.} z\in\Omega.
\end{equation}
\end{assum}
\begin{assum}{H(f)}{ass:hf1:bdd}
There exist $\widehat{c}_3>0$ such that
\begin{equation}\label{eq:ass:hf1:eq:bdd}
|f(z,\xi)|\leq  \widehat{c}_3(1+|\xi|^{q-1})
\end{equation}
for a.e. $z\in\Omega$ and for all $\xi\in\rzecz$, where $q\in(1,p)$ is the same as in $\fullref{ass:ha1:nabla}$.
\end{assum}

\begin{uw}\label{rem:Fconc}
Hypothesis \emph{$\fullref{ass:hf1:eq:flF}$} implies that
\begin{equation}\label{eq:Fconc}
\widehat{c}_2|\xi|^{\mu}\leq F(z, \xi) \quad \tto{for a.e.} z\in\Omega \tto{and for } |\xi|\leq \delta_0,
\end{equation}
with some $\widehat{c}_2>0$.
\end{uw}

\begin{uw}\label{rem:psi}
Let us consider
\begin{equation}
\psi(\xi)=\widehat{c}_0 |\xi|^{s-2} \xi-\widehat{c}_1 |\xi|^{r-2}\xi, \quad \xi\in\rzecz.\label{eq:defpsisr}
\end{equation}
Then inequality \eqref{eq:ass:hf1:eq:fgeqx} becomes
$ f(z,\xi)\xi\geq\psi(\xi)\xi$ for $\xi\in\rzecz$ and a.e. $z\in\Omega$, so for  a.e. $z\in\Omega$ we have
\begin{equation}\label{eq:osz_psi}
\begin{array}{ll}
f(z,\xi)\geq \psi(\xi),& \tto{ if }\xi \geq 0,\\
f(z,\xi)\leq \psi(\xi),& \tto{ if }\xi < 0.\\
\end{array}
\end{equation}
Let $\xi_0=\left(\frac{\widehat{c}_0}{\widehat{c}_1}\right)^{\frac{1}{r-s}}$. We can observe that  $\psi>0$ on $(-\infty, -\xi_0)\cup(0, \xi_0)$ and $\psi<0$
on $(-\xi_0, 0)\cup(\xi_0, \infty)$. Also $\psi$ is strictly increasing on $(\infty,-\xi_0)$ and strictly decreasing on $(\xi_0, \infty)$. Thus, from \emph{$\fullref{ass:hf1:wc}$} and
\eqref{eq:osz_psi} we infer that for a.e. $z\in\Omega$ we have $\psi(w_+(z))<0$ and $\psi(w_-(z))>0$.
\end{uw}
\begin{lt}\label{lt:srp}  Let $1<s\leq p\leq r$ with $s<r$. For any constants $\alpha, \beta, \gamma$, given, if $\alpha, \beta>0$, then we can find $M_1, M_2>0$ such that for any $\xi>0$ we have
\begin{equation}\label{eq:srp}
\alpha \xi^s-\beta\xi^r+\gamma \xi\leq M_1 - M_2 \xi^p.
\end{equation}
\end{lt}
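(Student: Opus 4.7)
The plan is to rearrange the desired inequality as
$$
g(\xi) \ := \ \alpha \xi^s - \beta \xi^r + \gamma \xi + M_2 \xi^p \ \leq \ M_1 \quad \tto{for all} \xi>0,
$$
and reduce matters to choosing $M_2>0$ so that $g$ is bounded above on $(0,\infty)$. Since $g$ is continuous on $[0,\infty)$ with $g(0)=0$, it suffices to arrange that $g(\xi)\to -\infty$ as $\xi\to\infty$; then $M_0:=\sup_{\xi\geq 0}g(\xi)$ is finite and nonnegative, and we may take $M_1:=M_0+1>0$.

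I would split into two sub-cases according to whether $p<r$ or $p=r$. If $p<r$, I would pick any $M_2>0$, say $M_2=1$; then among the terms of $g$, the dominant one at infinity is $-\beta\xi^r$, since $r>s$, $r>1$, and $r>p$, so $g(\xi)\to-\infty$. If instead $p=r$, then the hypothesis $s<r$ forces $s<p$, and I would pick any $M_2\in(0,\beta)$; the $\xi^r$-terms then combine as $-(\beta-M_2)\xi^p$ with $\beta-M_2>0$, and since this growth rate $p$ strictly dominates the exponents $s<p$ and $1<p$ appearing in the remaining terms, again $g(\xi)\to-\infty$.

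The comparison of growth rates can be made quantitative, if desired, by a Young-type estimate: for $1<a<b$ and $\varepsilon>0$ there exists $C=C(\varepsilon,a,b)$ with $t^a\leq \varepsilon t^b + C$ for all $t\geq 0$. Applied with $b=r$ in the first sub-case (and $b=p$ in the second), one absorbs $\alpha\xi^s$, $|\gamma|\xi$, and $M_2\xi^p$ into a small fraction of $\beta\xi^r$ (resp.\ of $(\beta-M_2)\xi^p$) at the cost of an additive constant, and $M_1$ can be read off explicitly. But the soft argument via the continuous function attaining its supremum is shorter.

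The only place where care is needed is the sub-case $p=r$: here $M_2$ is not free but must be chosen strictly below $\beta$, and one must invoke the strict inequality $s<r$ (given in the hypothesis as $s\neq r$ together with $s\leq p\leq r$) to control the $\alpha\xi^s$ term. Apart from that, the lemma is essentially a routine polynomial growth comparison.
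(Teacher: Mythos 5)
Your argument is correct: the case split $p<r$ versus $p=r$, the choice $M_2<\beta$ in the latter case (where $s<r$ indeed forces $s<p$), and the soft boundedness argument for the continuous function $g(\xi)=\alpha\xi^s-\beta\xi^r+\gamma\xi+M_2\xi^p$ tending to $-\infty$ together give exactly the claimed $M_1, M_2>0$. The paper states this lemma without proof, treating it as a routine growth comparison, and your proof (or the quantitative Young-type variant you sketch) is precisely the standard argument that fills this in; nothing is missing.
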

\begin{przykl}\label{exampl:f} Function
$f(z, \xi)=\psi(\xi)=\widehat{c}_0 |\xi|^{s-2} \xi-\widehat{c}_1 |\xi|^{r-2}\xi$ satisfies conditions
\emph{$\fullref{ass:hf1:ainf}$}--\emph{$\fullref{ass:hf1:eq:fgeqx}$} (see \cite{Klimczak2015a}). 
\end{przykl}
\begin{uw} We use the hypotheses   \emph{$\fullref{ass:ha1:apG}$} and  \emph{$\fullref{ass:hf1:bdd}$} only in the proof of Proposition \ref{stwr:nodal}, which is needed only in the proof of the existence of the nontrivial nodal solution. Thus the result concerning existence of the two constant sign solutions and two extremal solutions remains valid with only assuming hypotheses \emph{$\fullref{ass:ha1:zero}$}--\emph{$\fullref{ass:ha1:mu}$} and \emph{$\fullref{ass:hf1:ainf}$}--\emph{$\fullref{ass:hf1:eq:fgeqx}$}. In particular, in these results on the reaction therm we impose growth conditions  with respect to the second variable  only near $0$, without any control in $\pm\infty$. Examples \ref{exampl:a} and \ref{exampl:f} satisfy all the hypotheses $H(a)$ and $H(f)$.
\end{uw}
Now we can state the main result of this paper.
\begin{twr}\label{twr:main_theorem}
If hypotheses \emph{$\fullref{ass:ha1:zero}$}--\emph{$\fullref{ass:ha1:mu}$} and \emph{$\fullref{ass:hf1:ainf}$}--\emph{$\fullref{ass:hf1:eq:fgeqx}$} hold,
then problem \eqref{Problem} has a smallest positive smooth solution
$$
u_+\in \textrm{int } C_+ 
$$
and a biggest negative smooth solution
$$
v_-\in -\textrm{int }C_+.
$$
If we additionally assume that hypotheses \emph{$\fullref{ass:ha1:apG}$} and \emph{$\fullref{ass:hf1:bdd}$} hold, then the problem \eqref{Problem} has a third nontrivial and nodal solution $y_0\in[v_-, u_+]\cap C^1(\Omega)$.
\end{twr}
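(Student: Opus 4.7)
The plan breaks into two pieces that together yield the three solutions claimed.

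First, for the extremal constant sign solutions, I would invoke the existence result from \cite{Klimczak2015a} (valid under $\fullref{ass:ha1:zero}$--$\fullref{ass:ha1:mu}$ and $\fullref{ass:hf1:ainf}$--$\fullref{ass:hf1:eq:fgeqx}$) to obtain at least one positive solution in the order interval $[0, w_+]$, making
$$
S_+ = \{ u \in [0, w_+] : u \text{ is a nontrivial solution of }\eqref{Problem}\}
$$
nonempty, and similarly $S_-$. The decisive step is to show that $S_+$ is downward directed: given $u_1, u_2 \in S_+$, I would truncate $f$ between $0$ and $\min\{u_1, u_2\}$, minimize the resulting coercive functional, and use the strict monotonicity and $(S)_+$ property of $A$ (Proposition \ref{stwr:propG-a}) together with a weak comparison argument to produce a nontrivial $u \in S_+$ with $u \leq u_i$. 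Downward directedness then yields a decreasing sequence $\{u_n\} \subseteq S_+$ whose pointwise infimum equals $\inf S_+$; uniform $L^{\infty}$ bounds combined with the $C^{1,\beta}$-regularity from Theorem \ref{twr:liebe} give $C^1(\bar{\Omega})$-convergence to a limit $u_+ \in S_+$, necessarily the smallest positive solution, and the strong maximum principle (Theorem \ref{twr:pserr}) places $u_+ \in \text{int } C_+$. The construction of $v_- \in -\text{int } C_+$ is symmetric.

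For the nodal solution, with $u_+$ and $v_-$ at hand, I would perform a double truncation $\hat f(z, \cdot)$ equal to $f(z, \cdot)$ on $[v_-(z), u_+(z)]$ and extended constantly outside this interval. Thanks to $\fullref{ass:ha1:apG}$ and $\fullref{ass:hf1:bdd}$, the associated functional
$$
\hat{\phi}(u) = \comz{G(\nabla u)} - \comz{\hat F(z, u)}
$$
is coercive, sequentially weakly lower semicontinuous and therefore satisfies the Cerami condition; testing against $(u - u_+)^+$ and $(v_- - u)^+$ confines any critical point of $\hat\phi$ to $[v_-, u_+]$, so that each such critical point solves \eqref{Problem}. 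Introducing one-sided truncations $\hat\phi_\pm$ and exploiting the extremality of $u_+, v_-$, one checks that they are global minimizers of the respective $\hat\phi_\pm$, hence local $C^1(\bar{\Omega})$-minimizers of $\hat\phi$ because they belong to the interior of the corresponding cones; Theorem \ref{twr:minimizers} then upgrades them to local $W^{1,p}(\Omega)$-minimizers of $\hat\phi$.

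With two distinct local minimizers of the $C^1$ functional $\hat\phi$, the usual mountain pass geometry applies along paths from $v_-$ to $u_+$, and Theorem \ref{twr:mpt} produces a third critical point $y_0 \in [v_-, u_+] \cap C^1(\bar{\Omega})$ of mountain pass type; Proposition \ref{stwr:cg1_mpt} then gives $C_1(\hat\phi, y_0) \neq 0$. The main obstacle is to exclude $y_0 = 0$, and this is where the concave-type behavior at the origin enters crucially. Using $\fullref{ass:hf1:eq:flF}$ together with Remark \ref{rem:Fconc}, I have $F(z, \xi) \geq \hat c_2 |\xi|^{\mu}$ near zero, while $\fullref{ass:ha1:mu}$ forces $G_0(t)/t^{\mu} \to 0$ as $t \to 0^+$; evaluating $\hat\phi$ on a sequence of small constant functions $u_n \equiv \epsilon_n \to 0^+$ already gives $\hat\phi(u_n) < 0$ with $\|u_n\| \to 0$, so $0$ is not a local minimizer, and a deformation argument based on the $\mu$-concave structure yields $C_k(\hat\phi, 0) = 0$ for every $k \geq 0$. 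In particular $C_1(\hat\phi, 0) = 0 \neq C_1(\hat\phi, y_0)$, hence $y_0 \neq 0$. Since $u_+$ and $v_-$ are the \emph{extremal} constant sign solutions, the nontrivial $y_0 \in [v_-, u_+]$ cannot be of constant sign and must therefore be nodal.
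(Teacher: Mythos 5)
Your overall architecture (downward/upward directedness plus a limiting argument for the extremal solutions; double truncation, two local minimizers, mountain pass, and vanishing critical groups at $0$ for the nodal solution) is the same as the paper's, but two steps as you state them have genuine gaps. First, in the extremal part you never rule out that the decreasing sequence realizing $\inf S_+$ converges to $0$. A uniform $L^\infty$ bound and $C^{1,\beta}$ estimates give convergence of $u_n$ to some limit, but nothing in your argument prevents that limit from being the trivial function, and the strong maximum principle cannot help until nontriviality is known. This is exactly why the paper introduces the auxiliary problem \eqref{AuxProblem}, proves in Proposition \ref{stwr:aux_u_s} that its unique positive solution is the constant $u_*\equiv\xi_0\in\ttt{int }C_+$, and then shows (Claim \ref{lt:minorantaa} in the proof of Proposition \ref{stwr:extremality}) that every $y\in\mathcal{Y}_+$ satisfies $y\geq\xi_0$; this uniform positive lower bound is what keeps the limit in $\mathcal{Y}_+$ (and is also used, via $c_+\leq\xi_0\leq\bar u$ and Remark \ref{rem:Fconc}, to get nontriviality of the minimizer in the downward-directedness step). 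Without some substitute for this lower bound your conclusion ``a limit $u_+\in S_+$'' is unjustified.

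Second, in the nodal part your truncation ``extended constantly outside $[v_-(z),u_+(z)]$'' does not yield a coercive functional in this Neumann setting, so the assertion that $\hat\phi$ is coercive (and hence Cerami) fails. Indeed, testing the equation for $u_+$ with $v\equiv 1$ gives $\int_\Omega f(z,u_+(z))\,\ud z=0$, so along constant functions $u\equiv t$ one has $\nabla u=0$ and $\hat\phi(t)=-\int_\Omega \hat F(z,t)\,\ud z$ tending to a finite constant as $t\to\pm\infty$: the functional is not coercive, and there is no Poincar\'e-type inequality to save you. The paper's truncation \eqref{eq:ftr3} deliberately adds the terms $|\xi|^{p-2}\xi$ (balanced by $\frac{1}{p}\|u\|_p^p$ in $\widetilde{\varphi}$) and $\psi(\xi)-\psi(u_+(z))$, whose $-\widehat{c}_1|\xi|^{r-2}\xi$ behaviour at infinity produces genuine coercivity of $\widetilde{\varphi}$ and $\widetilde{\varphi}^{\pm}$, and also gives the strict inequalities needed to confine $K_{\widetilde\varphi}$ to $[v_-,u_+]$ via the strict monotonicity of $a$ (Proposition \ref{stwr:propG-a}); note also that coercivity by itself does not give Cerami -- the paper still needs the $(S)_+$ argument of Lemma \ref{lt:cerami}. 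The remainder of your sketch (the two one-sided minimizers upgraded by Theorem \ref{twr:minimizers}, $C_1\neq 0$ at the mountain pass point versus $C_k(\varphi,0)=0$ from the $\mu$-concavity at the origin, and extremality forcing $y_0$ to be nodal) agrees with the paper's proof.
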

%\begin{twr}
%$C^1_n(\Omega)$-minimizer vs $\wpno$-minimizer (dowód z \ref{wn:osz})
%\end{twr}
%%%%%%%%%%%%%%%%%%%%%%%%%%%%%%%%%%%%%%%%%%%%%%%%%%%%%%%%%%%%%%%%%%%%%%%%%%%%%%%%%%%%%%%%%%%%%%%%%%%%%%%%%%%%%%%%%%%%%%%%%555
%%%%%%%%%%%%%%%%%%%%%%%%%%%%%%%%%%%%%%%%%%%%%%%%%%%%%%%%%%%%%%%%%%%%%%%%%
%%%%%%%%%%%%%%%%%%%%%%%%%%555
\section{Existence of two extremal constant sign solutions}\label{sec:extremal}
In this section we prove that problem \eqref{Problem} has two extremal solutions of constant sign. First we recall an existence theorem, which can be found in \cite{Klimczak2015a}. For the convenience of the reader, we will remind main steps of the proof. For a detailed proof we refer to \cite{Klimczak2015a} (see also the proof of Proposition \ref{stwr:extremality}).
\begin{stwr}\label{stwr:2cs_sol}
If hypotheses \emph{$\fullref{ass:ha1:zero}$}--\emph{$\fullref{ass:ha1:mu}$} and \emph{$\fullref{ass:hf1:ainf}$}--\emph{$\fullref{ass:hf1:eq:fgeqx}$} hold,
then problem \eqref{Problem} has at least two nontrivial constant sign smooth solutions
$$
u_0\in \textrm{int } C_+\tto{and} v_0\in -\textrm{int }C_+.
$$
\end{stwr}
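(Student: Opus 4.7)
\medskip
My plan is to construct $u_0$ and $v_0$ by the standard truncation plus direct method strategy, using the barriers $w_\pm$ from \fullref{ass:hf1:wc} to obtain bounded truncated nonlinearities whose associated functionals are coercive, minimizing them to obtain a solution in the order interval $[0,w_+]$ (respectively $[w_-,0]$), and then using hypothesis \fullref{ass:hf1:eq:flF} together with the regularity theory (Theorems \ref{twr:liebe} and \ref{twr:pserr}) to push the solution into $\textrm{int}\,C_+$ (respectively $-\textrm{int}\,C_+$).

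Concretely, for the positive solution I would truncate $f$ at the barriers by setting
\[
\widehat{f}_+(z,\xi) = \begin{cases} 0, & \xi<0,\\ f(z,\xi), & 0\leq \xi\leq w_+(z),\\ f(z,w_+(z)), & \xi>w_+(z), \end{cases}
\]
let $\widehat{F}_+(z,\xi)=\int_0^\xi \widehat{f}_+(z,t)\ud t$ and define $\widehat{\phi}_+\in C^1(\wpo)$ by $\widehat{\phi}_+(u)=\comz{G(\nabla u)}-\comz{\widehat{F}_+(z,u)}$. Since $\widehat{f}_+$ is bounded in the second variable (via \fullref{ass:hf1:ainf} with $\rho=\|w_+\|_\infty$) and \eqref{eq:osz} gives $G(y)\geq \frac{c_1}{p(p-1)}|y|^p$, the functional $\widehat{\phi}_+$ is coercive and weakly sequentially lower semicontinuous, so Theorem \ref{twr:minicoer} yields a global minimizer $u_0\in \wpo$. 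To see $u_0\neq 0$, I evaluate $\widehat{\phi}_+$ at a small constant $t\in(0,\delta_0)$: by \eqref{eq:Fconc} in Remark \ref{rem:Fconc} we get $\widehat{\phi}_+(t)\leq -\widehat{c}_2 t^\mu |\Omega|$, and since $\mu\leq q< p$ this forces $\widehat{\phi}_+(u_0)<0=\widehat{\phi}_+(0)$.

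Next, testing $\widehat{\phi}_+'(u_0)=0$ against $(-u_0)^+$ and $(u_0-w_+)^+$ and using the $(S)_+$/monotonicity properties of $A$ together with \eqref{eq:Aw-0Aw+} and the sign condition $f(z,w_+(z))\leq 0$ yields $0\leq u_0\leq w_+$. Consequently $\widehat{f}_+(\cdot,u_0)=f(\cdot,u_0)$, so $u_0$ solves \eqref{Problem} weakly. Because $u_0\in L^\infty(\Omega)$, Theorem \ref{twr:liebe} (applied with $A(z,\xi,y)=a(y)$, the simplification \eqref{eq:liebea_s}--\eqref{eq:liebeb_s} being exactly \fullref{ass:ha1:nabla} and \fullref{ass:ha1:nabla_sc}) gives $u_0\in C^{1,\beta}(\bar\Omega)\subseteq C_+$. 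Finally, to show $u_0\in\textrm{int}\,C_+$, I invoke Theorem \ref{twr:pserr}: from Remark \ref{rem:psi} and hypothesis \fullref{ass:hf1:eq:fgeqx} we have $f(z,\xi)\geq \psi(\xi)\geq -\widehat{c}_1 \xi^{r-1}$ for small $\xi\geq 0$, which gives a differential inequality of the form required by Pucci--Serrin with $A(s)=a_0(s)$, so $u_0$ cannot vanish at any interior point. A symmetric truncation between $w_-$ and $0$, with the analogous argument using $-\xi_0\in(w_-,0)$ and $F(z,-\delta_0)>0$, produces $v_0\in -\textrm{int}\,C_+$.

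The main obstacle, which I expect to mirror the delicate point in \cite{Klimczak2015a}, is the nontriviality of the minimizer: because the hypotheses H(a) allow $G$ to grow like $|y|^q$ for small gradients (only the upper bound $G(y)\leq c_4(|y|^q+|y|^p)$ is available near $0$), matching $\mu\leq q$ in \fullref{ass:ha1:mu} with the concave-type lower bound on $F$ in \fullref{ass:hf1:eq:flF} is precisely what is needed to make $\widehat{\phi}_+(t)<0$ for small $t$; verifying this carefully, along with the test-function computations that confine $u_0$ to $[0,w_+]$ without using monotonicity of $\xi\mapsto f(z,\xi)$, is the technical heart of the proof.
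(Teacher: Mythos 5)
Your overall strategy (truncate at the barriers $w_\pm$, minimize, confine to the order interval, then apply Lieberman regularity and the Pucci--Serrin maximum principle) is the same as the paper's, but the specific truncation you chose creates a genuine gap. Because this is a Neumann problem, $W^{1,p}(\Omega)$ contains the constant functions and there is no Poincar\'e inequality, so the bound $G(y)\geq \frac{c_1}{p(p-1)}|y|^p$ from \eqref{eq:osz} together with boundedness of your $\widehat f_+$ does \emph{not} give weak coercivity of $\widehat\phi_+$: for negative constants $u\equiv c<0$ your functional is identically $0$, and for large positive constants it stays bounded whenever $f(\cdot,w_+(\cdot))\equiv 0$ (which the hypotheses allow, since only $f(z,w_+(z))\leq 0$ is assumed). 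Hence Theorem \ref{twr:minicoer} cannot be invoked as you do. This is precisely why the paper's truncation \eqref{eq:ftr1} adds the term $\xi^{p-1}$ (with the compensating $\frac1p\|u\|_p^p$ in $\widehat\varphi_+$), which yields full control of $\|u^-\|_p$, and replaces $f$ above $w_+$ by $f(z,w_+(z))+\xi^{p-1}+\psi(\xi)-\psi(w_+(z))$ rather than by the constant $f(z,w_+(z))$: the $-\widehat c_1\xi^{r-1}$ tail of $\psi$, via hypothesis $\fullref{ass:hf1:eq:fgeqx}$ and Lemma \ref{lt:srp}, is what produces the $\widehat c_4\|u^+\|_p^p$ term and hence coercivity in the full norm.

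The same omission also breaks your confinement step. Testing your equation with $-u_0^-$ gives only $\int_{\{u_0<0\}}(a(\nabla u_0),\nabla u_0)_{\rzecz^N}\,\mathrm{d}z=0$, i.e. $\nabla u_0^-=0$, which leaves open the possibility that $u_0^-$ is a nonzero constant; likewise, testing with $(u_0-w_+)^+$ yields only $\langle A(u_0)-A(w_+),(u_0-w_+)^+\rangle\leq 0$, and monotonicity then forces equality and $\nabla(u_0-w_+)^+=0$, not $(u_0-w_+)^+=0$ -- again a constant shift survives. In the paper's truncation the $|\xi|^{p-2}\xi$ term eliminates the first degeneracy ($\|u_0^-\|_p^p\leq 0$), and the correction $\psi(\xi)-\psi(w_+(z))$, which is strictly negative for $\xi>w_+(z)>\xi_0$ by Remark \ref{rem:psi}, turns the second comparison into a strict inequality contradicting the strict monotonicity of $a$. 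So you should replace your truncation by one of the form \eqref{eq:ftr1}; the remaining ingredients of your outline (nontriviality via small constants and Remark \ref{rem:Fconc}, passage to \eqref{Problem} on $[0,w_+]$, regularity and the strong maximum principle) then go through as in the paper.
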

\begin{proof}
We will describe the proof of the existence of a nontrivial positive smooth solution $u_0$. The proof for the negative smooth solution is similar. We introduce the following truncation of the reaction term:
\begin{equation}\label{eq:ftr1}
\widehat{f}_+(z,\xi)=\left\{\begin{array}{ll}
0 & \tto{if} \xi<0,\\
f(z,\xi)+\xi^{p-1} & \tto{if} 0\leq \xi \leq w_+(z),\\
f(z, w_+(z))+ \xi^{p-1}+\psi(\xi) -\psi( w_+(z)) & \tto{if} \xi >w_+(z)
\end{array}\right.
\end{equation}
where $\psi$ is given by \eqref{eq:defpsisr}. This is a Carath\'eodory function. Let$$
\widehat{F}_+(z,\xi)=\int_0^{\xi}\widehat{f}_+(z,t)\textrm{d}t
$$
and consider the $C^1-$functional $\widehat{\varphi}_+\colon\wpo\to\rzecz$, defined by
\begin{equation}
\widehat{\varphi}_+(u)=\int_{\Omega}G(\nabla u(z))\ud z+\frac{1}{p}\|u\|^p_p-\int_{\Omega}\widehat{F}_+(z,u(z))\ud z, \quad u\in
W^{1,p}(\Omega).
\end{equation}

Functional $\widehat{\varphi}_+$ is weakly sequentially lower semicontinuous by the strict convexity of $G$. Using $\fullref{ass:hf1:ainf}$, $\fullref{ass:hf1:wc}$ and $\fullref{ass:hf1:eq:fgeqx}$ together with Remark \ref{rem:psi}, we prove that there exist some constants $\widehat{c}_4, \widehat{c}_6>0 $ and $\widehat{c}_5, \widehat{c}_7\in \rzecz$  such that
\begin{equation*}\begin{split}
\widehat{\varphi}_+(u)&=\int_{\Omega}G(\nabla u(z))\ud z+\frac{1}{p} \|u\|_p^{p}-\comz{\widehat{F}_+(z,u(z))}\\
 &\geq \frac{c_1}{p(p-1)}\comz{\|\nabla u(z)\|^p}+ \frac{1}{p} \|u^+\|_p^{p}+\frac{1}{p} \|u^-\|_p^{p}+\widehat{c}_4\|u^+\|_p^p-\frac{1}{p} \|u^+\|_p^{p}+\widehat{c}_5 \\
 & \geq \widehat{c}_6\|u\|^p+\widehat{c}_7.
\end{split}\end{equation*}
Hence  $\widehat{\varphi}_+$ is also weakly coercive, so by Theorem \ref{twr:minicoer}  there
 exists $u_0\in W^{1,p}(\Omega)$ such that
$$
\widehat{\varphi}_+(u_0)=\min\limits_{u\in W^{1,p}(\Omega)}\widehat{\varphi}_+(u).
$$
Thus we have that $(\widehat{\varphi}_+)'(u_0)=0$ (see Zeidler \cite{ziib}, Proposition 25.11, p. 510). This implies
\begin{equation}\label{eq:arnf+}
A(u_0)+ |u_0|^{p-2}u_0=N_{\widehat{f}_+}(u_0).
\end{equation}
Using  $\fullref{ass:ha1:mu}$ and  $\fullref{ass:hf1:eq:flF}$ with Remark \ref{rem:Fconc} we show that $\widehat{\varphi}_+(u_0)<0$, so $u_0\neq 0$. Acting on \eqref{eq:arnf+} first with $-u_0^-\in \wpo$ and next with $(u_0-w_+)^+\in \wpo$, by $\fullref{ass:hf1:wc}$,  Remark \ref{rem:psi} and the strict monotonicity of $a$ (see Proposition \ref{stwr:propG-a}), we obtain that
$
u_0\in[0, w_+],
$
where $[0, w_+]=\{u\in \wpo \mid 0 \leq u(z) \leq w_+(z) \tto{for a.e.} z\in\Omega \}$.
Then, by \eqref{eq:ftr1} and \eqref{eq:arnf+} we obtain
\begin{equation}\label{eq:arf1}
A(u_0)=N_{f}(u_0).
\end{equation}
 Also $u_0\in L^{\infty}(\Omega)$, as for a.e. $z\in \Omega$ we have $|u(z)|\leq w_+(z)\leq \|w_+\|_{C(\bar{\Omega})}$.
Next we prove that
\begin{equation}\label{eq:solu0_ae}
-\ttt{div }a\left(\nabla u_0 (z)\right)=f\left(z, u_0 (z)\right) \tto{a.e. in}\Omega
\end{equation}
and, using the Green's formula for operator $a$ (see Theorem \ref{twr:green}), that
$$
\frac{\partial u}{\partial n_a}=0 \tto{in} T^{-p'}(\partial\Omega)
$$
(see Gasi\'nski-Papageorgiou \cite{cGasinskiPapageorgiou2008a}).
Then, by the regularity result of Lieberman (Theorem \ref{twr:liebe}) and the strong maximum principle (see Theorem
\ref{twr:pserr}), we obtain that $u_0\in\ttt{int }C_+$ solves \eqref{Problem}.

\medskip
To prove the existence of the nontrivial negative smooth solution, we use the  truncation:
\begin{equation}\label{eq:ftr2}
\widehat{f}_-(z,\xi)=\left\{\begin{array}{ll}
0 & \tto{if} \xi>0,\\
f(z,\xi)+\xi^{p-1} & \tto{if}  w_-(z)\leq \xi\leq 0 ,\\
f(z, w_-(z))+ \xi^{p-1}+\psi(\xi) -\psi( w_-(z)) & \tto{if} \xi <w_-(z)
\end{array}\right.
\end{equation}
and proceed analogously.
\end{proof}

To show that problem \eqref{Problem} has extremal solutions of constant sign, we consider the following auxiliary problem
\begin{equation}\label{AuxProblem}
\left\{
\begin{array}{l}
 -\ttt{div }a\left(\nabla u (z)\right)=\psi(u(z)) \tto{ in}\Omega, \\
 \pd{u}{n_a}=0 \tto{on} \partial\Omega,
\end{array}
\right.
\end{equation}
(see \eqref{eq:defpsisr}). Recall that $1<s\leq \tau\leq p\leq r <p^*$ with $s\neq r$ and $s<\mu$.

We also introduce the following integral functional $\sigma_+\colon L^1(\Omega)\to \rzecz$, defined by
$$
\sigma_+(u)=\left\{
\begin{array}{l}
\comz{G(\nabla u^{\frac{1}{\tau}})} \tto{ if } u\geq 0,\ u^{\frac{1}{\tau}}\in\wpo  \\
 \infty \quad  \tto{otherwise}.
\end{array}
\right.
$$
\begin{lt}\label{lt:s0cone}
$\sigma_+$ is an element of the cone of proper (i.e. $\ttt{dom } \sigma_0:=\{u\in L^1(\Omega)\mid \sigma_+(u)<\infty\}\neq\emptyset $) convex and lower semicontinuous functions, denoted by $\Gamma_0( L^1(\Omega))$.
\end{lt}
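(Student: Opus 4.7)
The plan is to verify, in order, that $\sigma_+$ is proper, convex, and lower semicontinuous on $L^1(\Omega)$. Properness is immediate: $\sigma_+(0)=0$ since $G(0)=0$ by Proposition~\ref{stwr:propG-a}.

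For convexity, the crux is to exploit hypothesis~\eqref{eq:ass:ha1:tau1}. Setting $\Phi(s):=G_0(s^{1/\tau})$ (convex by \eqref{eq:ass:ha1:tau1} and nondecreasing, since $G_0$ is increasing), one checks on $\{u>0\}$ that
$$
G(\nabla u^{1/\tau})=\Phi\!\left(\frac{|\nabla u|^\tau}{\tau^\tau\,u^{\tau-1}}\right).
$$
The map $(u,p)\mapsto u^{1-\tau}|p|^{\tau}$ is the perspective function of the convex function $p\mapsto|p|^\tau$, hence jointly convex on $(0,\infty)\times\mathbb{R}^N$; composition with the convex, nondecreasing $\Phi$ preserves joint convexity. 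Translating back to the variable $v=u^{1/\tau}$, for $u_1,u_2\in\mathrm{dom}\,\sigma_+$ with $v_i=u_i^{1/\tau}$, $\lambda\in(0,1)$, and $v=(\lambda v_1^\tau+(1-\lambda)v_2^\tau)^{1/\tau}$, this yields the pointwise a.e.\ inequality
$$
|\nabla v|^\tau\leq\lambda|\nabla v_1|^\tau+(1-\lambda)|\nabla v_2|^\tau,
$$
which holds trivially on $\{v=0\}$ since all relevant gradients vanish a.e.\ there. Applying $\Phi$ (using its monotonicity and convexity) upgrades this to $G(\nabla v)\leq\lambda G(\nabla v_1)+(1-\lambda)G(\nabla v_2)$, and integration over $\Omega$ gives the convexity inequality for $\sigma_+$; if either $u_i\notin\mathrm{dom}\,\sigma_+$ the inequality is trivial.

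For lower semicontinuity, consider $u_n\to u$ in $L^1(\Omega)$ with $L:=\liminf_n\sigma_+(u_n)<\infty$, and pass to a subsequence realising the liminf and converging a.e., so that $u\geq 0$ and each $u_n^{1/\tau}\in W^{1,p}(\Omega)$. The left estimate in \eqref{eq:osz} bounds $\|\nabla u_n^{1/\tau}\|_p$ uniformly; since $\|u_n^{1/\tau}\|_\tau^\tau=\|u_n\|_1$ is bounded, the Poincar\'e inequality (applied to $u_n^{1/\tau}-\overline{u_n^{1/\tau}}$) provides a uniform $W^{1,p}(\Omega)$ bound. By reflexivity and the compact embedding $W^{1,p}(\Omega)\hookrightarrow L^r(\Omega)$ for any $r<p^*$, a further subsequence satisfies $u_n^{1/\tau}\rightharpoonup u^{1/\tau}$ in $W^{1,p}(\Omega)$, identifying the weak limit through the a.e.\ convergence $u_n\to u$. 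Hence $u\in\mathrm{dom}\,\sigma_+$, and the weak lower semicontinuity of $w\mapsto\int_\Omega G(\nabla w)\,dz$ on $W^{1,p}(\Omega)$, which follows from the convexity of $G$ established in Proposition~\ref{stwr:propG-a}, yields $\sigma_+(u)\leq L$.

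The principal obstacle is the pointwise convexity identity: justifying the chain rule $\nabla u^{1/\tau}=\tau^{-1}u^{1/\tau-1}\nabla u$ and the resulting inequality when the $v_i$ may vanish on sets of positive measure, so that the formal perspective-function calculation is not directly valid. This is handled by an $\varepsilon$-regularisation (replacing $v_i$ by $v_i+\varepsilon$), which places every quantity in the regime $u>0$ where the joint convexity of $(u,p)\mapsto u^{1-\tau}|p|^\tau$ applies in its classical form, followed by a monotone passage to the limit $\varepsilon\to 0^+$ combined with the lower semicontinuity step above.
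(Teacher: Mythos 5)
Your proposal is correct and follows essentially the same route as the paper: the paper's proof is a one-line appeal to $\fullref{ass:ha1:mu}$, Proposition \ref{stwr:propG-a} and Lemma 1 of Diaz--Sa\'a (proceeding as in Proposition 7 of Gasi\'nski--Papageorgiou), and your perspective-function convexity computation plus the growth-bound/weak lower semicontinuity argument is exactly the content of those citations, written out in full. The $\varepsilon$-regularisation you flag to justify the chain rule on $\{v=0\}$ is the standard device used in those references as well, so there is no genuine divergence.
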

\begin{proof}
Using $\fullref{ass:ha1:mu}$, Proposition \ref{stwr:propG-a} and Lemma 1 from Diaz-Saa \cite{DiazSaa1987}, we proceed like in the proof of Proposition 7 in Gasi\'nski-Papageorgiou \cite{GasinskiPapageorgiou2014}.
\end{proof}
\begin{stwr}\label{stwr:aux_u_s}
If hypotheses \emph{$\fullref{ass:ha1:zero}$} -- \emph{$\fullref{ass:ha1:mu}$} hold, then problem \eqref{AuxProblem} has a unique nontrivial positive solution 
$u_*\in \ttt{int }C_+$ and due to the oddness of\eqref{AuxProblem}, $v_*=-u_*\in-\ttt{int }C_+$ is the unique nontrivial negative solution.
\end{stwr}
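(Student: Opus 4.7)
The plan is to invoke Proposition~\ref{stwr:2cs_sol} with reaction $f(z,\xi):=\psi(\xi)$. By Example~\ref{exampl:f}, $\psi$ verifies all four hypotheses \fullref{ass:hf1:ainf}--\fullref{ass:hf1:eq:fgeqx}; the barriers required by \fullref{ass:hf1:wc} can be taken as the constants $w_{\pm}(z)\equiv\pm\xi_0$ with $\xi_0=(\widehat{c}_0/\widehat{c}_1)^{1/(r-s)}$, since then $\psi(\pm\xi_0)=0$ and $A$ vanishes on constants. Applying Proposition~\ref{stwr:2cs_sol} yields a positive smooth solution $u_*\in\textrm{int }C_+$ and, by the oddness of both $a$ and $\psi$, its reflection $v_*:=-u_*\in-\textrm{int }C_+$ solves \eqref{AuxProblem} as well (alternatively, the analogous truncation \eqref{eq:ftr2} of the proof of Proposition~\ref{stwr:2cs_sol} produces $v_*$ directly).

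\textbf{Uniqueness via a Diaz-Saa type argument.} Suppose $u_1,u_2\in\textrm{int }C_+$ are two positive solutions. Set $w_i:=u_i^{\tau}$; since each $u_i$ is $C^1(\bar{\Omega})$ and bounded below by a positive constant, $w_i^{1/\tau}=u_i\in W^{1,p}(\Omega)$, so $w_i\in\textrm{dom}(\sigma_+)$. A formal differentiation of $t\mapsto\sigma_+(w_i+t\varphi)$ at $t=0$, together with Green's formula (Theorem~\ref{twr:green}) and the Neumann condition $\partial u_i/\partial n_a=0$, suggests that
$$
\frac{-\textrm{div }a(\nabla u_i)}{\tau\,u_i^{\tau-1}}\;=\;\frac{\psi(u_i)}{\tau\,u_i^{\tau-1}}\;\in\;\partial\sigma_+(w_i).
$$
The monotonicity of the convex subdifferential of $\sigma_+$ (available thanks to Lemma~\ref{lt:s0cone}) then yields
$$
\int_{\Omega}\Bigl(\frac{\psi(u_1)}{u_1^{\tau-1}}-\frac{\psi(u_2)}{u_2^{\tau-1}}\Bigr)\bigl(u_1^{\tau}-u_2^{\tau}\bigr)\,\ud z\;\geq\;0.
$$

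\textbf{Monotonicity clash.} A direct computation gives $\psi(u)/u^{\tau-1}=\widehat{c}_0\,u^{s-\tau}-\widehat{c}_1\,u^{r-\tau}$, and since $s\leq\tau\leq r$ with $s<r$ by \fullref{ass:hf1:eq:fgeqx}, the map $t\mapsto\psi(t)/t^{\tau-1}$ is strictly decreasing on $(0,\infty)$. Hence the integrand above is nonpositive everywhere and strictly negative on $\{z:u_1(z)\neq u_2(z)\}$. Combined with the lower bound $0$ from the previous display, this forces $u_1=u_2$ a.e.\ in $\Omega$, so the positive solution is unique: $u_*:=u_1=u_2$. The uniqueness of $v_*=-u_*$ as a negative solution then follows from the odd symmetry of \eqref{AuxProblem}.

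\textbf{Anticipated obstacle.} The principal subtlety is the rigorous identification of $\partial\sigma_+(w_i)$ at $w_i=u_i^{\tau}$, because $\sigma_+$ is defined via the nonsmooth change of variables $w\mapsto w^{1/\tau}$ on $L^1(\Omega)$ and the formal chain rule introduces a potentially singular factor $1/u_i^{\tau-1}$. Two facts save the argument: (i) $u_i\in\textrm{int }C_+$ is bounded away from $0$ on $\bar{\Omega}$, so $1/u_i^{\tau-1}$ is smooth and bounded; and (ii) the convexity of $t\mapsto G_0(t^{1/\tau})$ granted by \fullref{ass:ha1:mu} is exactly the ingredient needed to run the Diaz-Saa machinery, following the template of Proposition~7 in Gasi\'nski-Papageorgiou~\cite{GasinskiPapageorgiou2014} and Lemma~1 in Diaz-Saa~\cite{DiazSaa1987}.
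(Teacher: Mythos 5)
Your core uniqueness mechanism is the same one the paper uses: the Diaz--Saa device built on $\sigma_+$, pitting the monotonicity of its derivative against the strict decrease of $t\mapsto\widehat{c}_0t^{s-\tau}-\widehat{c}_1t^{r-\tau}$. Your existence step is an admissible but heavier alternative: the paper simply observes that the constant function $u_*\equiv\xi_0$ solves \eqref{AuxProblem}, since $\psi(\xi_0)=0$ and $a(0)=0$; this explicit identification $u_*\equiv\xi_0$ is moreover what gets used later (Claim 1 in the proof of Proposition \ref{stwr:extremality}), so the detour through Proposition \ref{stwr:2cs_sol} buys you nothing here.

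There is, however, a genuine gap in the uniqueness part: you assume from the outset that the two positive solutions $u_1,u_2$ belong to $\ttt{int }C_+$. The proposition asserts uniqueness among \emph{all} nontrivial positive solutions $u\in\wpo$ of \eqref{AuxProblem}, and such a solution is a priori neither bounded nor bounded away from zero, so the Diaz--Saa computation (which needs $0<\xi_0\leq u_i\leq\|w_+\|_{C(\bar{\Omega})}$ and $C^1$ regularity to make sense of $-\ttt{div }a(\nabla u_i)/u_i^{\tau-1}$ and of $u_i^{\tau}+\lambda x\in\ttt{dom }\sigma_+$) cannot be started. The paper closes exactly this gap before invoking $\sigma_+$: acting on the equation with $(\xi_0-u)^+$ and using $\psi>0$ on $(0,\xi_0)$ together with \eqref{eq:w1_a1c} gives $u\geq\xi_0$; acting with $(u-w_+)^+$ and using Remark \ref{rem:psi}, \eqref{eq:Aw-0Aw+} and the monotonicity of $a$ gives $u\leq w_+$; hence $u\in L^{\infty}(\Omega)$, and Lieberman's regularity theory (Theorem \ref{twr:liebe}) then places $u$ in $\ttt{int }C_+$. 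As written, your argument proves only uniqueness within $\ttt{int }C_+$, which is weaker than the statement; supplying these a priori bounds is also precisely what resolves the ``anticipated obstacle'' you flag, since boundedness away from $0$ and the $C^1(\bar{\Omega})$ bound are what legitimize the directional differentiation of $\sigma_+$ at $u^{\tau}$ along $C^1(\bar{\Omega})$ directions.
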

\begin{proof}
Observe that $u_*\equiv \xi_0\in \ttt{int } C_+$ solves \eqref{AuxProblem} (see Remark \ref{rem:psi}).
Let us check the uniqueness of this positive solution.

Let $u, y \in \wpo$ be two nontrivial positive solutions of \eqref{AuxProblem}. Then, by Theorem \ref{twr:green}, we have
\begin{gather}
A(u)=N_{\psi}(u) \quad  \tto{on}  \wpo\cap L^{\infty}(\Omega);\label{eq:AuNpsi}\\
A(y)=N_{\psi}(y) \quad  \tto{on}  \wpo\cap L^{\infty}(\Omega).\label{eq:AyNpsi}
\end{gather}
 We will prove that $u, y$ are bounded above by $w_+$. To this end, first we show that $u, y\geq \xi_0$. Suppose, to derive a contradiction, that $|\{u<\xi_0\}|_N:=|\{z\in\Omega\mid u(z)<\xi_0\}|_N>0$. Then we act on \eqref{eq:AuNpsi} with $(\xi_0-u)^+\in\wpo$ and obtain
\begin{displaymath}\begin{split}
0&<\int\limits_{\{u<\xi_0\}}\psi(u(z))(\xi_0-u)(z)\ud z=\comz{\psi(u(z))(\xi_0-u)^+(z)}=\comz{\left(a(\nabla u, \nabla (\xi_0-u)^+)\right)_{\rzecz^N}}\\
&=-\int\limits_{\{u<\xi_0\}}\left(a(\nabla u, \nabla u)\right)_{\rzecz^N}\ud z \leq -\int\limits_{\{u<\xi_0\}}\frac{c_1}{p(p-1)}|\nabla u(z)|^p \ud z\leq 0
\end{split}\end{displaymath}
(see Remark \ref{rem:psi} and \eqref{eq:w1_a1c}), which is a contradiction. So we have that $u\geq \xi_0$. In the same manner we show that $y\geq \xi_0$. Next we suppose that $|\{u>w_+\}|_N:=|\{z\in\Omega\mid u(z)>w_+(z)\}|_N>0$ and we act on \eqref{AuxProblem}  with $(u-w_+)^+\in \wpo$. We obtain
\begin{displaymath}\begin{split}
\int_{\Omega}\left(a(\nabla u(z)),\nabla (u-w_+)^+(z))\right)_{\rzecz^N}\ud z &=\int_{\Omega}\psi(u(z))(u-w_+)^+(z)\ud z <0\\
&\leq\int_{\Omega}\left(a(\nabla w_+(z)),\nabla (u-w_+)^+(z))\right)_{\rzecz^N}\ud z
\end{split}\end{displaymath}
(see Remark \ref{rem:psi} and $\fullref{ass:hf1:wc}$). Hence 
$$
\int_{\Omega}\left(a(\nabla u(z))-a(\nabla w_+(z)),\nabla (u-w_+)^+(z))\right)_{\rzecz^N}\ud z<0,
$$
a contradiction with the monotonicity of $a$ (see Proposition \ref{stwr:propG-a}). So we have $u\in[\xi_0, w_+]$ and in the same way we obtain $y\in [\xi_0, w_+]$. Thus $u, y\in L^{\infty}(\Omega)$. Then the nonlinear regularity theory (\cite{liebe} p. 320) implies that $u, y \in \ttt{int } C_+$. Also $u, y \in \ttt{dom }\sigma_+$. Let $x\in C^1(\bar{\Omega})$. Then there exists $\lambda_0\in(0,1)$ such that for any $|\lambda|<\lambda_0$ we have $\left(u^\tau+\lambda  x\right)\in\ttt{int }C_+$ and also $\left(u^\tau+\lambda  x\right)\in\ttt{dom }\sigma_+$. Hence the G\^{a}teaux derivative of $\sigma_+$ at $u^{\tau}$ in the direction $x$ exists and, via the chain rule, we have
$$
\sigma_+'(u^{\tau})(x)=\comz{\frac{-\ttt{div }a(\nabla u)}{u^{\tau-1}}x} \ \tto{and} \ \sigma_+'(y^{\tau})(x)=\comz{\frac{-\ttt{div }a(\nabla y)}{y^{\tau-1}}x}.
$$
The convexity of $\sigma_+$ implies the monotonicity of $\sigma_+'$. Hence
\begin{displaymath}
\begin{split}
0\leq & \left<\sigma_+'(u^{\tau})-\sigma_+'(y^{\tau}),u^{\tau}-y^{\tau} \right>=\\
&=\comz{\left(\frac{-\ttt{div }a(\nabla u)}{u^{\tau-1}}+\frac{\ttt{div }a(\nabla y)}{y^{\tau-1}}\right)(u^{\tau}-y^{\tau})}\\
&=\comz{\left(\frac{\hat{c}_0 u ^{s-1}-\hat{c}_1 u^{r-1}}{u^{\tau-1}}-\frac{\hat{c}_0 y ^{s-1}-\hat{c}_1 y ^{r-1}}{y^{\tau-1}}\right)(u^{\tau}-y^{\tau})}\\
&=\comz{\left[\left(\hat{c}_0\frac{1}{u^{\tau -s}}-\hat{c}_1 u^{r-\tau}\right)-\left(\hat{c}_0\frac{1}{y^{\tau -s}}-\hat{c}_1 y^{r-\tau} \right)\right](u^{\tau}-y^{\tau})}\leq 0
\end{split}
\end{displaymath}
(see \ref{AuxProblem}). The last inequality holds because, as $s\leq \tau \leq r$ with $s<r$, the map $\zeta\to\hat{c}_0\frac{1}{\zeta^{\tau-s}}-\hat{c}_1\zeta^{r-\tau}$ is strictly decreasing on $(0, \infty)$ and $\zeta\to\zeta^{\tau}$ is strictly increasing. Therefore $u=y$ and this proves the uniqueness of the solution $u_*\in\ttt{int }C_+$.

The oddness of \eqref{AuxProblem} implies that $v_*:=-u_*\equiv\xi_0\in-\ttt{int }C_+$ is the unique nontrivial negative solution of \eqref{AuxProblem}. 
\end{proof}

Using this proposition, one can establish the existence of extremal nontrivial constant sign solutions for problem \eqref{Problem}.
\begin{stwr}\label{stwr:extremality}
If hypotheses \emph{$\fullref{ass:ha1:zero}$} -- \emph{$\fullref{ass:ha1:mu}$} and \emph{$\fullref{ass:hf1:ainf}$} -- \emph{$\fullref{ass:hf1:eq:fgeqx}$} hold, then problem \eqref{Problem} has the smallest nontrivial positive solution $u_+\in\ttt{int }C_+$ and the biggest nontrivial negative solution $v_-\in-\ttt{int }C_+$.
\end{stwr}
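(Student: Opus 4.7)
My plan is to focus on constructing the smallest positive solution $u_+\in\ttt{int }C_+$; the construction of $v_-\in-\ttt{int }C_+$ is symmetric, using the oddness of the auxiliary problem \eqref{AuxProblem} and the negative-side truncation \eqref{eq:ftr2}. Let $S_+$ denote the set of all nontrivial positive solutions of \eqref{Problem} lying in the order interval $[0,w_+]$. Proposition \ref{stwr:2cs_sol} guarantees that $S_+\neq\emptyset$, and combining Theorems \ref{twr:liebe} and \ref{twr:pserr} yields $S_+\subseteq\ttt{int }C_+$.

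The central technical step is the a priori lower bound
\[
u_*\leq u\qquad\text{for every }u\in S_+,
\]
where $u_*\in\ttt{int }C_+$ is the unique positive solution of \eqref{AuxProblem} from Proposition \ref{stwr:aux_u_s}. For this I fix $u\in S_+$ and mimic the proof of Proposition \ref{stwr:2cs_sol}: introduce a Carath\'eodory truncation of the auxiliary reaction $\psi$ that equals $\psi(\xi)+\xi^{p-1}$ for $0\leq\xi\leq u(z)$, vanishes for $\xi<0$, and is frozen at $\psi(u(z))+u(z)^{p-1}$ for $\xi>u(z)$. The corresponding functional $\hat{\sigma}_+\in C^1(\wpo,\rzecz)$ is coercive and sequentially weakly lower semicontinuous by \eqref{eq:osz}, so it attains a global minimizer $\tilde{u}$. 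Testing $\hat{\sigma}_+'(\tilde u)=0$ with $-\tilde u^-$ and with $(\tilde u-u)^+$, using strict monotonicity of $A$ from Proposition \ref{stwr:propG-a}, the fact that $u$ solves \eqref{Problem}, and the inequality $\psi(\xi)\leq f(z,\xi)$ for $\xi\geq 0$ from Remark \ref{rem:psi}, yields $0\leq\tilde u\leq u$; consequently $A(\tilde u)=N_\psi(\tilde u)$, so $\tilde u$ solves \eqref{AuxProblem}. The delicate point is to verify $\tilde u\neq 0$: since $s<\mu\leq q<p$, the primitive $\Psi(\xi)=\tfrac{\hat c_0}{s}|\xi|^s-\tfrac{\hat c_1}{r}|\xi|^r$ dominates $G(\nabla(\cdot))$ near zero along a fixed direction $v_1\in\ttt{int }C_+$ with $v_1\leq u$ (available because $u\in\ttt{int }C_+$), so $\hat{\sigma}_+(tv_1)<0$ for small $t>0$. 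Hence $\tilde u\neq 0$, and Proposition \ref{stwr:aux_u_s} forces $\tilde u=u_*$.

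With this lower bound in hand, I would next show that $S_+$ is downward directed, i.e.\ for any $u_1,u_2\in S_+$ there is $u\in S_+$ with $u\leq\min\{u_1,u_2\}$. This is a parallel truncation: use $f(z,\cdot)$ truncated at $0$ below and at $\min\{u_1,u_2\}(z)$ above (with a $\psi$-type tail outside, as in \eqref{eq:ftr1}), minimize the resulting coercive functional, and repeat the test-function analysis to locate the minimizer in $[0,\min\{u_1,u_2\}]$; its nontriviality follows from the same $s<p$ comparison combined with \eqref{eq:Fconc}. Then I select a pointwise decreasing sequence $\{u_n\}\subseteq S_+$ with $\inf_n u_n(z)=\inf_{u\in S_+}u(z)$ almost everywhere (a standard chain/lattice argument, cf.\ Hu--Papageorgiou, \emph{Handbook of Multivalued Analysis}, Vol.\ I, Lemma 3.10). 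Since $u_*\leq u_n\leq w_+$, the sequence is bounded in $\wpo$; a weak limit $u_+$ exists along a subsequence, and applying the $(S)_+$ property of $A$ from Proposition \ref{stwr:propG-a} to $A(u_n)=N_f(u_n)$ upgrades convergence to strong convergence in $\wpo$. Passing to the limit gives $A(u_+)=N_f(u_+)$, and $u_+\in\ttt{int }C_+$ by Theorems \ref{twr:liebe} and \ref{twr:pserr}; the bound $u_+\geq u_*>0$ forces $u_+\neq 0$, so $u_+=\inf S_+\in S_+$ is the smallest positive solution.

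The step I expect to be hardest is the nontriviality of $\tilde u$ in the lower-bound argument: one must carefully exploit both the subcritical behaviour of $\Psi$ near zero (via $s<\mu\leq q<p$) and the availability of a small positive test function sitting beneath $u\in\ttt{int }C_+$, so that the truncated functional takes strictly negative values and the minimizer is pushed away from the trivial critical point at the origin. Everything else is a relatively standard combination of order-theoretic arguments, the $(S)_+$ property, and the regularity machinery already assembled in Section~\ref{sec:math_back}.
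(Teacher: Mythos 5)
Your proposal is correct and shares the paper's overall skeleton (auxiliary solution $u_*$ as a lower barrier, downward directedness of the solution set via a truncation at $\min\{u_1,u_2\}$, extraction of a bounded monotone sequence, strong convergence through the $(S)_+$ property of $A$ from Proposition \ref{stwr:propG-a}, and regularity to land in $\ttt{int }C_+$), but it deviates at two points. First, for the key lower bound $u_*\leq u$ for every $u\in\mathcal{Y}_+$, the paper exploits the explicit form $u_*\equiv\xi_0$ from Remark \ref{rem:psi} and gets the bound in one stroke by acting on $A(u)=N_f(u)$ with $(\xi_0-u)^+$ and using $f(z,\xi)\geq\psi(\xi)$ for $\xi\geq 0$ together with \eqref{eq:w1_a1c}; you instead minimize a truncated auxiliary functional to produce a nontrivial solution $\tilde u\in[0,u]$ of \eqref{AuxProblem} and then invoke the uniqueness in Proposition \ref{stwr:aux_u_s}. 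Your route is longer but more robust (it never uses that $u_*$ is constant, so it would survive a non-autonomous auxiliary problem), while the paper's is shorter and more elementary; note also that your nontriviality step simplifies further by taking the test direction to be a small positive constant (possible since $u\in\ttt{int }C_+$), which kills the gradient term outright, and that before quoting uniqueness you should pass $\tilde u$ through the regularity and strong maximum principle machinery (Theorems \ref{twr:liebe}, \ref{twr:pserr}) to know it is a \emph{positive} solution, exactly as the paper does elsewhere. Second, for producing the smallest element you bypass the Kuratowski--Zorn argument: the paper takes a chain, uses the Dunford--Schwartz lemma to reduce to a sequence, obtains a minimal element by Zorn and only then uses downward directedness to upgrade minimality to smallestness, whereas you use directedness up front to build a decreasing sequence realizing $\inf\mathcal{Y}_+$ and identify its limit as $u_+$; both are standard and correct, your version being marginally more direct once directedness is in hand. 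The remaining steps (coercivity and weak lower semicontinuity of the truncated functionals, locating minimizers in the order interval by testing with $-\check u^-$ and $(\check u-\bar u)^+$, nontriviality via \eqref{eq:ass:ha1:mu1} and \eqref{eq:Fconc}) coincide with the paper's, and the symmetric construction of $v_-$ is handled the same way in both.
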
 
\begin{proof}
Let $\mathcal{Y}_+$ be the set of nontrivial postive solutions of problem \eqref{Problem} in the ordered interval $[0, w_+]$. From Proposition \ref{stwr:2cs_sol}, we know that $\mathcal{Y}_+\neq \emptyset$.

We are going to use the Kuratowski-Zorn lemma to prove the existence of a~minimal element of $\mathcal{Y}_+$. We will need the following auxiliary results:
\begin{claim_no}\label{lt:minorantaa}
If $y\in\mathcal{Y}_+$, then $u_*\leq y$ with $u_*\in\ttt{int }C_+$ as in Proposition \ref{stwr:aux_u_s}.
\end{claim_no}
\begin{proofclaim}{lt:minoranta}
 Let $y\in\mathcal{Y}_+$. Then
 \begin{equation}\label{eq:arfy}
 A(y)=N_f(y) \quad \tto{on}  \wpo\cap L^{\infty}(\Omega).
 \end{equation}
  Recall that $u_*\equiv \xi_0$ (see Remark \ref{rem:psi}). Define $\Omega_1:=\{z\in\Omega  \mid y(z)<\xi_0$\}.
Suppose, to derive a contradiction, that $|\Omega_1|_{N}>0$. We act on \eqref{eq:arfy} with $(\xi_0-y)^+\in  \wpo\cap L^{\infty}(\Omega)$ and obtain
\begin{equation*}\begin{split}
0&<\int_{\Omega_1}\psi(y(z))(\xi_0-y)\ud z\leq \int_{\Omega_1}f(z, y(z))(\xi_0-y)\ud z=\int_{\Omega_1}(a(\nabla y(z)),\nabla(\xi_0-y(z)))_{\rzecz^N}\ud z\\
&=-\int_{\Omega_1}(a(\nabla y(z)),\nabla y(z))_{\rzecz^N}\ud z\leq - \int_{\Omega_1}\frac{c_1}{p-1}|\nabla y(z)|^p\ud z\leq 0
\end{split}\end{equation*}
(see Remark \ref{rem:psi} and \eqref{eq:w1_a1c} in Proposition \ref{stwr:propG-a}), which is a contradiction.
\end{proofclaim}
\begin{claim_no}\label{lt:minorantab} 
 $\mathcal{Y}_+$ is downward directed, i.e. for any $u_1, u_2 \in \mathcal{Y}_+$ one can find $u\in\mathcal{Y}_+$, such that
$$
u\leq u_1 \ \tto{and} \ u\leq u_2.
$$
\end{claim_no}
\begin{proofclaim}{lt:minorantab}
The idea of this proof follows from Gasi\'nski-Papageorgiou \cite[p. 208]{GasinskiPapageorgiou2010} and uses the notion of upper and lower solutions. We say that $x\in\wpo$ is a \emph{weak upper solution} (respectively, \emph{weak lower solution}) of problem \eqref{Problem}, if
\begin{gather*}
\langle A(x),h\rangle \ \geq \ \langle N_f(x),h\rangle \quad  \tto{for all} h\in\wpo, \  h\geq 0 \\
 (\langle A(x),h\rangle) \ \leq \ \langle N_f(x),h\rangle \quad   \tto{for all} h\in\wpo, \ h\geq 0 \ \ttt{ respectively}).
\end{gather*}
Using the monotonicity of the map $a$ (see Proposition \ref{stwr:propG-a}), we can adapt the proof of Lemma 4.2. in Gasi\'nski-Papageorgiou \cite{GasinskiPapageorgiou2009} to see that the set of upper solutions of \eqref{Problem} is downward directed and in fact, if $x_1, x_2\in\wpo$ are two upper solutions, then $\bar{x}:=\min\{u_1, u_2\}$ is an upper solution. Also the set of lower solutions of \eqref{Problem} is upward directed and if $x_1, x_2\in\wpo$ are two lower solutions, then $\underbar{x}:=\max\{u_1, u_2\}$ is a lower solution.

Now let $u_1, u_2 \in \mathcal{Y}_+$. We have that $\bar{u}:=\min\{u_1, u_2\}\in\wpo\cap L^{\infty}(\Omega)$ is an upper solution of \eqref{Problem}, as $u_1, u_2$ are clearly upper solutions, too.
% From hypothesis $\fullref{ass:hf1:wc}$ we infer that $w_+$ is an upper solution of \eqref{Problem}. 
We use the truncation
\begin{equation}\label{eq:tr:check_f}
\check{f}(z,\xi)=\left\{\begin{array}{ll}
0 & \tto{if} \xi<0,\\
f(z,\xi)+\xi^{p-1}  & \tto{if} 0\leq \xi \leq \bar{u}(z),\\
f(z, \bar{u}(z))+ \xi^{p-1}+\psi(\xi) -\psi( \bar{u}(z)) & \tto{if} \xi >\bar{u}(z)
\end{array}\right.
\end{equation}
and  we define a $C^1$-functional  $\check{\varphi}\colon\wpo\to\rzecz$ by %can find $\check{u}\in \wpo$ which is a nontrivial critical point of
$$
\check{\varphi}_+(u)=\int_{\Omega}G(\nabla u(z))\ud z+\frac{1}{p}\|u\|^p_p-\int_{\Omega}\check{F}(z,u(z))\ud z, \quad u\in
W^{1,p}(\Omega),
$$
with $\check{F}(z, \xi)=\int_0^{\xi}\check{f}(z, s)\ud s$.  Then we proceed as in the proof of Proposition \ref{stwr:2cs_sol}:
it is clear that $\check{\varphi}_+$ is weakly sequentially lower semicontinuous and we want to prove that it is also weakly coercive. We have
\begin{equation}\label{eq:tr:check_F}
\check{F}_+(z,\xi)=\left\{\begin{array}{lll}
0 & &\tto{if} \xi<0,\\
\int\limits_0^{\xi}f(z,t)\ud t+\frac{1}{p} \xi^{p} && \tto{if} 0\leq \xi \leq \bar{u}(z),\\
\int\limits_0^{\bar{u}(z)}f(z,t)\ud t \ +&\hspace{-0.3cm} (\xi-\bar{u}(z))f(z, \bar{u}(z)) \ + & \\
\hspace{0.5cm}+\int\limits_{\bar{u}(z)}^{\xi}\psi(t) \ud t& - \  (\xi-\bar{u}(z))\psi( \bar{u}(z))+\frac{1}{p} \xi^{p} & \tto{if} \xi >\bar{u}(z),
\end{array}\right.
\end{equation}
so
\begin{equation}\label{eq:coerF}\begin{split}
\comz{\check{F}_+(z, u(z))}
&=\comz{\check{F}_+(z, u^+(z))}
=\int\limits_{\{0<u< \bar{u} \}} \int\limits_0^{u(z)}f(z,t)\ud t \ud z
+ \int\limits_{\{u> \bar{u} \}} \int\limits_0^{\bar{u}(z)}f(z,t)\ud t \ud z\\
&+\int\limits_{\{u> \bar{u} \}}\big((u(z)-\bar{u}(z))f(z, \bar{u}(z))-   (u(z)-\bar{u}(z))\psi( \bar{u}(z)) \big)\ud z\\
&+\int\limits_{\{u> \bar{u} \}}\big(\frac{\widehat{c}_0}{s}u(z)^s-\frac{\widehat{c}_1}{r}u(z)^r-\frac{\widehat{c}_0}{s}\bar{u}(z)^s+\frac{\widehat{c}_1}{r}\bar{u}(z)^r\big)\ud z +\frac{1}{p}\|u^+\|_p^p.
\end{split}\end{equation}
By $\fullref{ass:hf1:ainf}$, as $\bar{u}\in  L^{\infty}(\Omega)$, we have that
\begin{equation}\label{eq:coerF1}\begin{split}
\int\limits_{\{0<u< \bar{u} \}}\!\!
\int\limits_0^{u(z)}f(z,t)\ud t \ud z
+ \!\!\int\limits_{\{u> \bar{u} \}}
\!\!\int\limits_0^{\bar{u}(z)}f(z,t)\ud t \ud z
&\leq  \!\!\!\!\!\! \int\limits_{\{0<u< \bar{u} \}}
\!\!\int\limits_0^{\|\bar{u}\|_{\infty}}f^+(z,t)\ud t \ud z
+  \!\!\int\limits_{\{u> \bar{u} \}}
\!\!\int\limits_0^{\|\bar{u}\|_{\infty}}f^+(z,t)\ud t \ud z\\
&\leq \int\limits_{\{0<u \}}\!\!
\!\!\int\limits_0^{\|\bar{u}\|_{\infty}} a_{\|\bar{u}\|_{\infty}}(z)\ud t \ud z \leq \check{c}_1, 
\end{split}\end{equation}
where $\check{c}_1\geq 0$ is a constant. As $\bar{u}\geq 0$ and for $\xi\geq 0$ we have $f(z, \xi)\geq \psi(\xi)$ for a.e. $z\in\Omega$ (see Remark \ref{rem:psi}), we obtain
\begin{equation}\label{eq:coerF2}
\int\limits_{\{u> \bar{u} \}}\bar{u}(z)(\psi( \bar{u}(z))-f(z, \bar{u}(z))) \ud z \leq 0.
\end{equation}
Since $\bar{u}\leq w_+$ and $w_+$ is a continuous function defined on a bounded subset of $\rzecz^N$, there exists some constant $\check{c}_2 \in \rzecz$, such that
\begin{equation}\label{eq:coerF3}
\int\limits_{\{u> \bar{u} \}}\frac{\widehat{c}_1}{r}\bar{u}(z)^r -\frac{\widehat{c}_0}{s}\bar{u}(z)^s \ud z \leq \check{c}_2.
\end{equation}
Using $\fullref{ass:hf1:ainf}$, Remark \ref{rem:psi} and Lemma \ref{lt:srp}, we can find constants $\check{c}_3, \check{c}_4, \check{c}_5>0$ and $\check{c}_6\in\rzecz$, such that
\begin{equation}\label{eq:coerF4}\begin{split}
&\int\limits_{\{u> \bar{u} \}}\big(\frac{\widehat{c}_0}{s}u(z)^s-\frac{\widehat{c}_1}{r}u(z)^r+u(z)(f(z, \bar{u}(z))-   \psi( \bar{u}(z)))\big)\ud z\\
\leq &\int\limits_{\{u> \bar{u} \}}\big(\frac{\widehat{c}_0}{s}u(z)^s-\frac{\widehat{c}_1}{r}u(z)^r+u(z)(a_{\|\bar{u}\|_{\infty}}-   \psi( \min\limits_{z\in\Omega} w_+(z)))\big)\ud z\\
\leq &\int\limits_{\{u> \bar{u} \}}(\check{c}_3- \check{c}_4 u(z)^p) \ud z\leq \check{c}_6 -\check{c}_5\|u^+\|^p_p.
\end{split}\end{equation}
In the last inequality we have used the fact that $w_+$ is a continuous function (see $\fullref{ass:hf1:wc}$) and thus
$$
\int\limits_{\Omega\backslash{\{u> \bar{u} \}}}u^+(z)^p \ud z=\int\limits_{\{0<u\leq  \bar{u} \}}u^+(z)^p \ud z\leq \int\limits_{\{0<u\leq  \bar{u} \}} w_+(z)^p\leq \check{M},
$$
where $\check{M}\geq 0$ is a constant. Combining   \eqref{eq:coerF1}, \eqref{eq:coerF2}, \eqref{eq:coerF3}, \eqref{eq:coerF4} with \eqref{eq:coerF} and
using \eqref{eq:osz}, we obtain
\begin{equation*}\begin{split}
\check{\varphi}_+(u)&=\int_{\Omega}G(\nabla u(z))\ud z+\frac{1}{p} \|u\|_p^{p}-\comz{\check{F}_+(z,u(z))}\\
 &\geq \frac{c_1}{p(p-1)}\comz{\|\nabla u(z)\|^p}
 + \frac{1}{p} \|u^+\|_p^{p}
 +\frac{1}{p} \|u^-\|_p^{p}+\check{c}_5\|u^+\|_p^p-\frac{1}{p} \|u^+\|_p^{p}+\check{c}_7 \\
 &\geq \check{c}_8 +\check{c}_9\|u\|^p,
\end{split}\end{equation*}
 with some constants
$\check{c}_7, \check{c}_8\in\rzecz$, $\check{c}_9>0$. Hence $\check{\varphi}_+$ is also weakly coercive, so we can apply Theorem \ref{twr:minicoer} and
obtain that there exists $\check{u}\in W^{1,p}(\Omega)$ such that
$$\check{\varphi}_+(\check{u})=\min\limits_{u\in\wpo}\check{\varphi}_+(u).$$
Hence
 $(\check{\varphi}_+)'(u_0)=0$ and thus
\begin{equation}\label{eq:arnf+-check}
A(\check{u})+ |\check{u}|^{p-2}\check{u}=N_{\check{f}_+}(\check{u}).
\end{equation}
To prove the nontriviality of $\check{u}$, we choose an arbitrary $\check{v}_+\in\ttt{int }C_+$ and we will find $\check{t}\in(0,1)$ such that
\begin{equation}\label{eq:phi_check_u}
\check{\varphi}(\check{u})\leq \check{\varphi}(t\check{v}_+)<0=\check{\varphi}(0).
\end{equation}
Using \eqref{eq:ass:ha1:mu1} (hypothesis  $\fullref{ass:ha1:mu}$), for a~given $\varepsilon >0$ we can find
$\delta_{1,\varepsilon}\in(0,\delta_0]$ such that
$$
G_0(t)\leq \varepsilon t^{\mu}\quad\forall t\in(0, \delta_{{1,\varepsilon}}].
$$
So, choosing $\check{t}\in(0,1)$ such that $\check{t}\check{v}_+(z)\leq \delta_{{1,\varepsilon}}$ and $|\nabla
(\check{t}\check{v}_+)(z)|\leq\min\{\delta_{0}, c_+\}$ for a.e. $z\in\Omega$, we have
$$
\check{\varphi}_+(\check{t}\check{v}_+)\leq \check{t}^{\mu}\varepsilon \|\nabla (\check{v}_+)\|^{\mu}_{\mu}- \widehat{c}_2 \check{t}^{\mu}\|\check{v}_+\|^{\mu}_{\mu}
$$
(see \eqref{eq:osz}, Remark \ref{rem:Fconc}, \eqref{eq:tr:check_f} and recall that by \eqref{eq:ass:hf1:wc-c+}, Claim \ref{lt:minorantaa} and Remark \ref{rem:psi} we have that $c_+\leq \xi_0 \leq \bar{u}$).
Choosing $\varepsilon< \frac{\widehat{c}_2\|\check{v}_+\|^{\mu}_{\mu}}{\|\nabla (\check{v}_+)\|^{\mu}_{\mu}}$, we obtain that
 \eqref{eq:phi_check_u} holds and thus $\check{u}\neq 0$. Acting with $-\check{u}^-\in\wpo$ on \eqref{eq:arnf+-check}, we get
\begin{equation*}\begin{split}
\frac{c_1}{p-1}\|\nabla(-\check{u}^-))\|_p^p+\|\!(-\check{u}^-)\|_p^p 
&\leq \comz{\left(a(\nabla \check{u}(z)),\nabla(-\check{u}^-(z)))\right)_{\rzecz^N}}
+\comz{|\check{u}(z)|^{p-2}\check{u}(z)(-\check{u}^-)(z)}\\
&=\int_{\Omega}\check{f}_+(z, \check{u}(z))(-\check{u})^-(z)\ud z =0
\end{split}\end{equation*}
(see \eqref{eq:w1_a1c} and \eqref{eq:tr:check_f}), so $\check{u}\geq 0$.
Suppose, to derive a contradiction, that $|\{\check{u}>\bar{u}\}|_{\rzecz^N}>0$. We have
\begin{equation*}\begin{split}
\langle A(\check{u}), (\check{u}-\bar{u})^+\rangle 
&+\comz{(\check{u}(z))^{p-1}(\check{u}-\bar{u})^+(z)}
=\comz{\check{f}(z,\check{u})(\check{u}-\bar{u})^+(z)}\\
&=\comz{\left(f(z,\bar{u}(z))+(\check{u}(z))^{p-1}+\psi(\check{u}(z))-\psi(\bar{u}(z))\right)(\check{u}-\bar{u})^+(z)}\\
&>\comz{(\check{u}(z))^{p-1}(\check{u}-\bar{u})^+(z)}+ \langle A(\bar{u}), (\check{u}-\bar{u})^+\rangle
\end{split}\end{equation*}
(here we have used the properties of $\psi$, see Remark \ref{rem:psi}, and the fact that $\bar{u}$ is an upper solution of \eqref{Problem}). Hence
$$
\langle A(\check{u})-A(\bar{u}),  (\check{u}-\bar{u})^+\rangle < 0,
$$
a contradiction with the strict monotonicity of $a$ (see Proposition \ref{stwr:propG-a}). So $\check{u}\in[0, \bar{u}]$ and thus $\check{u}\in\mathcal{Y}_+$ with $ \check{u} \leq u_1$ and $\check{u}\leq u_2$.
\end{proofclaim}
Let $C\subseteq \mathcal{Y}_+$ be a chain (i.e. totally ordered subset of $\mathcal{Y}_+$). From Dunford-Schwartz \cite{DunfordSchwartz1988} (p. 336) we know that we can find a sequence $\{u_n\}_{n\geq 1}\subseteq C$ such that
$$
\ttt{inf } C=\inf\limits_{n\geq 1}u_n.
$$
We have 
\begin{equation}\label{eq:aunnf}
A(u_n)=N_f(u_n)  , \quad n\geq 1.
\end{equation}
It follows from  Claim \ref{lt:minorantaa} that $u_*\leq u_n\leq w_+$, so the sequence $\{u_n\}_{n\geq 1}\subseteq \wpo$ is bounded, and consequently, it possesses a weakly convergent subsequence (see Leoni \cite{Leoni}, p. 302). Also
\begin{displaymath}
\begin{split}
&u_n \to u \tto{in} L^p(\Omega)\\
&\nabla u_n \rightharpoonup \nabla u \tto{in} L^p(\Omega;\rzecz^N)
\end{split}
\end{displaymath}
with some $u\in\wpo$ (see Leoni \cite{Leoni} pp. 302, 322). 

Now we act on \eqref{eq:aunnf} with $(u_n-u)\in \wpo$:
$$
\left<A(u_n),u_n-u\right>=\comz{f(z, u_n(z))(u_n-u) }.
$$

We have 
\begin{displaymath}\begin{split}
|\comz{f(z, u_n(z))(u_n-u) }|
& \leq
 \left(\comz{|f(z, u_n(z))|^{p'}}\right)^{1/p'}\|u_n-u \|_p\\
 &\leq \left(|\Omega|_N \|a_{w_+(z_0)}\|_{L^{\infty}}^p \right)^{1/p'}\|u_n-u \|_p,
\end{split}\end{displaymath}
so passing to the limit  as $n\to \infty$ we obtain
$$
\lim\limits_{n\to\infty}\left<A(u_n),u_n-u\right>=0,
$$
Thus, by Proposition \ref{stwr:propG-a},
$$
u_n\to u \tto{in} \wpo.
$$
Hence, passing to the limit as $n\to\infty$ in \eqref{eq:aunnf}, we have
$$
A(u)=N_f(u)\ \tto{and} \ u_*\leq u\leq w_+,
$$
so by Claim \ref{lt:minorantaa} we obtain that $u\in \mathcal{Y}_+$ is the lower bound for $C$. Since $C$ was an arbitrary chain in $\mathcal{Y}_+$, from the Kuratowski-Zorn lemma, we infer that $\mathcal{Y}_+$ has a minimal element $u_+\in\mathcal{Y}_+$.
From Claim \ref{lt:minorantab} we infer that $u_+$ is the smallest nontrivial positive solution of \eqref{Problem}.  The nonlinear regularity theory (\cite{liebe} p. 320) implies that $u_+\in\ttt{int }C_+$.

To prove the existence of the biggest nontrivial negative solution $v_-\in-\ttt{int }C_+$ we proceed analogously.
\end{proof}
%%%%%%%%%%%%%%%%%%%%%%%%%%%%%%%%%%%%%%%%%%%%%%%%5
%%%%%%%%%%%%%%%%%%%%NODALSOLUTION %%%%%%%%%%%%%%%%%%%5555
%%%%%%%%%%%%%%%%%%%%%%%%%%%%%%%%%%%%%%%%%%%%%%%%%%%5
%%%%%%%%%%%%%%%%%%%%%%%%%5
%%%%%%%%%%
\section{Nodal solution}\label{sec:nodal}
In this section we prove the existence of a third, nontrivial and nodal solution. In what follows, let $u_+\in\ttt{int }C_+$ be the smallest positive solution and  $v_-\in-\ttt{int }C_+$ - the biggest negative solution. We introduce the following truncation
\begin{equation}\label{eq:ftr3}
\widetilde{f}(z,\xi)=\left\{\begin{array}{ll}
f(x, v_-(z)) + |\xi|^{p-2}\xi +\psi(\xi)-\psi(v_-(z))& \tto{if} \xi<v_-(z),\\
f(z,\xi)+ |\xi|^{p-2}\xi & \tto{if} v_-(z)\leq \xi \leq u_+(z),\\
f(z, u_+(z))+ |\xi|^{p-2}\xi+\psi(\xi) -\psi( u_+(z)) & \tto{if} \xi >u_+(z)
\end{array}\right.
\end{equation}
If we define
$$
\widetilde{F}(z,\xi)=\int_0^{\xi}\widetilde{f}(z,t)\textrm{d}t,$$
then
\begin{equation}\label{eq:Ftr3}
\widetilde{F}(z,\xi)=\left\{\begin{array}{lll}
\int\limits_0^{v_-(z)}f(z,t)\ud t \ +&\hspace{-0.3cm} (\xi-v_-(z))f(z, v_-(z)) \ + & \\
\hspace{0.5cm}+\int\limits_{v_-(z)}^{\xi}\psi(t) \ud t& - \  (\xi-v_-(z))\psi( v_-(z))+\frac{1}{p} |\xi|^{p} &\tto{if} \xi<v_-(z),\\
\int\limits_0^{\xi}f(z,t)\ud t+\frac{1}{p} |\xi|^{p} && \tto{if} v_-(z)\leq \xi \leq u_+(z),\\
\int\limits_0^{u_+(z)}f(z,t)\ud t \ +&\hspace{-0.3cm} (\xi-u_+(z))f(z, u_+(z)) \ + & \\
\hspace{0.5cm}+\int\limits_{u_+(z)}^{\xi}\psi(t) \ud t& - \  (\xi-u_+(z))\psi( u_+(z))+\frac{1}{p} |\xi|^{p} & \tto{if} \xi >u_+(z)
\end{array}\right.
\end{equation}
and
\begin{displaymath}
\comz{\widetilde{F}(z,u(z))}=\sum_{i=1}^5\mathrm{I}_i +\frac{1}{p} \|u\|_p^{p},
\end{displaymath}
where
\begin{displaymath}\begin{split}
&I_1=\int\limits_{u<v_-}\left(\int\limits_0^{v_-(z)}f(z,t)\textrm{d}t-v_-(z)f(z,v_-(z))-\frac{\widehat{c}_0}{s} |v_- (z)|^s+\frac{\widehat{c}_1}{r} |v_- (z)|^{r}-v_-(z)\psi(v_-(z)))\right)\ud z,\\
&I_2=\int\limits_{v_-\leq u \leq u_+}\int\limits_0^{u(z)}f(z,t)\textrm{d}t\ud z,\\
&I_3=\int\limits_{u>u_+}\left(\int\limits_0^{u_+(z)}f(z,t)\textrm{d}t-u_+(z)f(z,u_+(z))-\frac{\widehat{c}_0}{s} u_+ (z)^s+\frac{\widehat{c}_1}{r} u_+(z)^{r}-u_+(z)\psi(u_+(z)))\right)\ud z,\\
&I_4=\int\limits_{u<v_-}\left(\frac{\widehat{c}_0}{s} |u(z)|^s-\frac{\widehat{c}_1}{r} |u(z)|^{r}  +u(z)(f(z, v_-(z)) -\psi( v_-(z)))\right)\ud z.\\
&I_5=\int\limits_{u>u_+}\left(\frac{\widehat{c}_0}{s} u(z)^s-\frac{\widehat{c}_1}{r} u(z)^{r}  +u(z)(f(z, u_+(z)) -\psi( u_+(z)))\right)\ud z.\\
\end{split}\end{displaymath}
Consider the $C^1-$functional
\begin{equation}\label{eq:tildephi}
\widetilde{\varphi}(u)=\int_{\Omega}G(\nabla u(z))\ud z+\frac{1}{p}\|u\|^p_p-\int_{\Omega}\widetilde{F}(z,u(z))\ud z, \quad u\in
W^{1,p}(\Omega).
\end{equation}
 Similarly as in the proof of Proposition \ref{stwr:extremality}, we show that $I_1+I_2 +I_3$ is bounded above and thus there exist some constants $\widetilde{c}_{1}>0$, $\widetilde{c}_{2}\in\rzecz$ such that
\begin{equation*}
\widetilde{\varphi}(u)\geq  \frac{c_1}{p(p-1)}\comz{\|\nabla u(z)\|^p} +\widetilde{c}_1\comz{|u(z)|^p}+\widetilde{c}_2, \ u\in\wpo
\end{equation*}
so $\widetilde{\varphi}$ is coercive. Also, let
\begin{equation*}
\widetilde{f}^{\pm}(z,\xi)=\widetilde{f}(z,\pm\xi^{\pm})
\ \tto{and} \
 \widetilde{F}^{\pm}(z,\xi)=\widetilde{F}(z,\pm\xi^{\pm})
\end{equation*}
and consider
\begin{equation*}
\widetilde{\varphi}^{\pm}(u)=\int_{\Omega}G(\nabla u(z))\ud z+\frac{1}{p}\|u\|^p_p-\int_{\Omega}\widetilde{F}^{\pm}(z,u(z))\ud z, \quad u\in
W^{1,p}(\Omega).
\end{equation*}

Reasoning as in the proof of Proposition \ref{stwr:extremality}, we obtain that $\widetilde{\varphi}^{\pm}(u)$ are also coercive.
\begin{lt}\label{lt:cerami}
The functional $\widetilde{\varphi}$ defined by \eqref{eq:tildephi} satisfies the Cerami condition.
\end{lt}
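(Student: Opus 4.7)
The plan is to combine the coercivity of $\widetilde\varphi$, which is already established above, with the $(S)_+$ property of the operator $A$ from Proposition~\ref{stwr:propG-a}. Once a Cerami sequence is shown to be bounded, the Cerami condition reduces to the Palais--Smale condition, and the remaining work is to upgrade weak convergence to strong convergence along the standard $(S)_+$ route.

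First I would take a sequence $\{u_n\}_{n\geq 1}\subseteq\wpo$ with $\widetilde\varphi(u_n)\to c$ and $(1+\|u_n\|)\widetilde\varphi'(u_n)\to 0$ in $\wpo^*$. The bound $\widetilde\varphi(u_n)\to c$ combined with coercivity (shown just before this lemma) gives that $\{u_n\}$ is bounded in $\wpo$. Passing to a subsequence, I may assume $u_n\rightharpoonup u$ weakly in $\wpo$ for some $u\in\wpo$, and by the compact Sobolev embedding $\wpo\hookrightarrow L^r(\Omega)$ (valid since $r<p^*$ by $\fullref{ass:hf1:eq:fgeqx}$) also $u_n\to u$ strongly in $L^r(\Omega)$ and in $L^p(\Omega)$.

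Since $\{u_n\}$ is bounded, the Cerami condition on derivatives collapses to $\widetilde\varphi'(u_n)\to 0$ in $\wpo^*$. Testing against $u_n-u\in\wpo$, which itself is bounded, yields
\[
\langle A(u_n),u_n-u\rangle+\comz{|u_n(z)|^{p-2}u_n(z)(u_n-u)(z)}-\comz{\widetilde f(z,u_n(z))(u_n-u)(z)}\to 0.
\]
The $|u_n|^{p-2}u_n$ term tends to zero by H\"older and the strong $L^p$ convergence. For the last integral I would read off from \eqref{eq:ftr3}, using the continuity of $v_-,u_+\in C(\bar\Omega)$ and the explicit form of $\psi$ in \eqref{eq:defpsisr}, a growth estimate of the form $|\widetilde f(z,\xi)|\leq\widetilde C(1+|\xi|^{r-1})$ uniformly in $z\in\Omega$; this makes $N_{\widetilde f}(u_n)$ bounded in $L^{r'}(\Omega)$, so strong $L^r$ convergence of $u_n-u$ to $0$ closes the argument via H\"older.

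Consequently $\langle A(u_n),u_n-u\rangle\to 0$, and the $(S)_+$ property from Proposition~\ref{stwr:propG-a} upgrades $u_n\rightharpoonup u$ to $u_n\to u$ in $\wpo$, establishing the Cerami condition at every level. The only point requiring care is the growth bound on $\widetilde f$: the truncations above $u_+$ and below $v_-$ add the lower-order terms $|\xi|^{p-2}\xi$ and $\psi(\xi)$, whose worst power is exactly $r-1$, and inside the interval $[v_-,u_+]$ hypothesis $\fullref{ass:hf1:ainf}$ bounds $f$ by an $L^\infty(\Omega)_+$ function. Hence the $r<p^*$ assumption is precisely what couples the Sobolev compactness with the Nemytskii boundedness needed here.
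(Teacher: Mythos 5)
Your proof is correct and follows essentially the same route as the paper: coercivity gives boundedness of the Cerami sequence, one tests $\widetilde{\varphi}'(u_n)$ against $u_n-u$, shows $\langle A(u_n),u_n-u\rangle\to 0$, and concludes by the $(S)_+$ property from Proposition \ref{stwr:propG-a}. The only (minor) difference is in the treatment of the truncated reaction term: the paper pairs the $L^\infty$ bound from $\fullref{ass:hf1:ainf}$ directly against the weak convergence $u_n\rightharpoonup u$, whereas you derive the polynomial growth bound $|\widetilde f(z,\xi)|\leq \widetilde C(1+|\xi|^{r-1})$ and use the compact embedding $\wpo\hookrightarrow L^r(\Omega)$ with H\"older; your variant in fact accounts explicitly for the $\psi$-contributions of the truncation, which the paper's displayed estimate glosses over.
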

\begin{proof}
Let $\{x_n\}_{n\geq 1}\subseteq \wpo$ be a sequence such that
\begin{equation}\label{eq:ass-cerami}
\widetilde{\varphi}(x_n) \ \to \ c
\quad \tto{and} \quad
(1+\|x_n\|)\widetilde{\varphi}'(x_n) \ \rightarrow \ 0 \ \tto{in} \ \wpo^*.
\end{equation}
The coercivity of $\widetilde{\varphi}$ implies that  $\{x_n\}_{n\geq 1}$ is bounded in $\wpo$, so passing to a subsequence if necessary, we can find $x\in \wpo$ such that
$$
x_n \ \to \ x \ \tto{weakly in} \ \wpo.
$$
%Thus we have
%\begin{displaymath}
%x_n \ \to \ x \tto{in} L^p(\Omega).
%\end{displaymath}
Using \eqref{eq:ass-cerami}, we can find a sequence $\{\varepsilon_n\}_{n\geq 1}\subseteq (0, \infty)$ such that $\varepsilon_n\to 0$ and
\begin{equation*}
|\left<A(x_n), h\right>+\comz{|x_n(z)|^{p-2}x_n(z)h(z)}-\comz{\widetilde{f}(z,x_n(z))h(z)}|\leq \varepsilon_n \frac{\|h\|}{ (1+\|x_n\|)}, \ h\in\wpo.
\end{equation*}
Set $h=(x_n-x)\in\wpo$.
By hypothesis $\fullref{ass:hf1:ainf}$ we have
\begin{gather*}
\comz{|x_n(z)|^{p-2}x_n(z)(x_n-x)(z)}-\comz{\widetilde{f}(z,x_n(z))(x_n-x)(z)}\\
= \ \int\limits_{x_n-x<v_-}\!f(z,v_-(z))(x_n-x)(z)\ud z
+\!\!\int\limits_{v_-\leq x_n-x \leq u_+}\!\!f(z,x_n(z))(x_n-x)(z)\ud z\\
+ \!\int\limits_{x_n-x>u_+}\!f(z,u_+(z))(x_n-x)(z)\ud z
 \ \leq \  \comz{\|a_{w_+(z_0)}\|_{\infty}(x_n-x)(z)} \ \to \ 0,
\end{gather*}
as $x_n\to x$ weakly in $\wpo$. Also $\{\frac{\|x_n-x\|}{ (1+\|x_n\|)}\}_{n\geq 1}$ is bounded, by boundedness of $\{x_n\}_{n\geq 1}$ in $\wpo$. Thus
 \begin{equation*}
\lim\limits_{n\to\infty}\left<A(x_n), x_n-x\right> \ = \ 0, 
\end{equation*}
so by Proposition \ref{stwr:propG-a} the sequence $\{x_n\}_{n\geq 1}$ admits a strongly convergent subsequence. 
\end{proof}
\begin{stwr}\label{stwr:ckphi0}
If hypotheses \emph{$\fullref{ass:ha1:zero}$}--\emph{$\fullref{ass:ha1:apG}$} and \emph{$\fullref{ass:hf1:ainf}$}--\emph{$\fullref{ass:hf1:bdd}$} hold and $u=0$ is an isolated critical point of the energy functional for problem \eqref{Problem}, given by
\begin{equation}\label{eq:phi}
\varphi(u)=\comz{G(\nabla u(z))}-\comz{F(z, u(z))}
\end{equation} 
then
$$
C_k(\varphi,0)=0, \quad \forall k\geq 0.
$$
\end{stwr}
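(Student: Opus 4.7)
The plan is to exploit the ``concavity at zero'' structure implicit in hypotheses \fullref{ass:ha1:mu} and \fullref{ass:hf1:eq:flF}, which together force $F$ to dominate $G$ along small radial directions from the origin, and then to argue topologically that the sublevel set $\varphi^0$ and its origin-punctured version are both contractible in a small neighborhood of $0$ in $\wpo$.

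First I would establish the radial estimate: for every $u\in C^1(\bar\Omega)\setminus\{0\}$, $\varphi(tu)<0$ for all sufficiently small $t>0$. The ingredients are $G_0(s)/s^{\mu}\to 0$ from \fullref{ass:ha1:mu} (which for each $\epsilon>0$ yields a $\delta>0$ with $G_0(s)\leq\epsilon s^\mu$ for $s\in(0,\delta]$) and the bound $F(z,\xi)\geq\widehat{c}_2|\xi|^{\mu}$ for $|\xi|\leq\delta_0$ from Remark~\ref{rem:Fconc}. For $t$ small enough that $t\|u\|_\infty\leq\delta_0$ and $t\|\nabla u\|_\infty\leq\delta$, these combine to give
\[
\varphi(tu)\leq t^{\mu}\left(\epsilon\int_\Omega|\nabla u(z)|^{\mu}\,dz-\widehat{c}_2\int_\Omega|u(z)|^{\mu}\,dz\right),
\]
which is negative once $\epsilon$ is chosen small enough, because $\int_\Omega|u|^{\mu}\,dz>0$. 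The additional hypotheses \fullref{ass:ha1:apG} (yielding the Euler-type inequality $\langle A(u),u\rangle\leq p\int_\Omega G(\nabla u)\,dz$ and hence radial control of $\varphi'$) and \fullref{ass:hf1:bdd} (giving the global growth $|f(z,\xi)|\leq\widehat{c}_3(1+|\xi|^{q-1})$, which makes $\varphi\in C^1(\wpo)$ with well-behaved derivative) will be used to promote this pointwise estimate to one that is uniform over a small $\wpo$-neighborhood of $0$.

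Second, I would use the radial estimate to build a continuous deformation. For each sufficiently small $r>0$ and each $u\in\overline{B}_r\setminus\{0\}$, select continuously a parameter $\tau(u)\in(0,1]$ with $\varphi(\tau(u)u)<0$, and build a homotopy $h\colon[0,1]\times(\overline{B}_r\cap\varphi^0)\to\overline{B}_r\cap\varphi^0$ that shrinks each point radially into the strict negative sublevel region and ultimately to the origin while staying inside $\varphi^0$. Since $\wpo$ is infinite-dimensional its unit sphere is contractible (Kakutani), so both $\overline{B}_r\cap\varphi^0$ (contractible to $\{0\}$ via $h$) and $(\overline{B}_r\cap\varphi^0)\setminus\{0\}$ (whose image under the restricted $h$ lands in a subset of a contractible sphere) have trivial singular homology in all degrees. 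The long exact sequence of the pair then yields $C_k(\varphi,0)=H_k(\overline{B}_r\cap\varphi^0,(\overline{B}_r\cap\varphi^0)\setminus\{0\})=0$ for every $k\geq 0$.

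The hardest part will be the second step, namely constructing $\tau$ and $h$ continuously and staying in $\varphi^0$ throughout. The first-step estimate is purely pointwise, and the threshold $t^{*}(u)$ below which $\varphi(tu)<0$ depends on $u$ in an a~priori discontinuous manner (through both $\|u\|_{\infty}$ and $\int_\Omega|u|^{\mu}\,dz$). Making this selection continuous will require careful use of \fullref{ass:ha1:apG} to control $\frac{d}{dt}\varphi(tu)$ and of \fullref{ass:hf1:bdd} to obtain uniform bounds on a $\wpo$-ball, probably via a pseudogradient or negative-flow construction in the spirit of standard Morse-theoretic arguments for concave-at-zero functionals.
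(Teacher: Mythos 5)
Your first step (the radial estimate $\varphi(tu)<0$ for small $t>0$, obtained from \emph{$\fullref{ass:ha1:mu}$} and Remark \ref{rem:Fconc}) is exactly the paper's opening move, but the proposal stops short of the two things that actually make the proof work. The decisive analytic ingredient, which you flag as ``the hardest part'' but do not supply, is the transversality claim proved in the paper: there exists $\varrho_1>0$ such that every $u$ with $\varphi(u)=0$ and $0<\|u\|\leq\varrho_1$ satisfies $\dpd{}{t}\varphi(tu)\big|_{t=1}>0$. This is the only place where \emph{$\fullref{ass:ha1:apG}$} and \emph{$\fullref{ass:hf1:bdd}$} enter (through \eqref{eq:osz}, \eqref{eq:ass:hf1:eq:flF} and the bound $F(z,\xi)\geq\widetilde{c}_3|\xi|^p-\widetilde{c}_4|\xi|^{r+1}$), and it is what forces each ray $t\mapsto\varphi(tu)$, $u\in B_{\varrho_1}$, to cross the level $0$ at most once and transversally. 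From it the paper gets \eqref{eq:phi_tu}, the existence of a unique $t_*\in(0,1)$ with $\varphi(t_*u)=0$ when $\varphi(u)>0$, and the continuity of the resulting map $T$. Your substitute ideas do not fill this hole: no uniform-over-a-ball smallness of the threshold $t^{*}(u)$ is available (nor needed, and it is not what \emph{$\fullref{ass:ha1:apG}$}/\emph{$\fullref{ass:hf1:bdd}$} give), and a pseudogradient or negative-flow deformation is not radial, need not stay in $\overline{B}_r$, need not avoid $0$, and stopping it on $\{\varphi=0\}$ requires precisely the missing transversality. Without the claim, your selection $\tau(u)$ is neither canonically single-valued nor continuous, so the homotopy $h$ is not constructed.

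The topological conclusion is also flawed as stated. The assertion that $(\overline{B}_r\cap\varphi^0)\setminus\{0\}$ is acyclic because its image under $h$ ``lands in a subset of a contractible sphere'' is a non sequitur: subsets of contractible spaces can have arbitrary homology, and your radial contraction of $\overline{B}_r\cap\varphi^0$ to $0$ exits the punctured set at its final time, so it does not contract that set. The paper's bookkeeping is what you need: the map $\widetilde{h}(t,u)=(1-t)u+tT(u)u$ with $T(u)\in(0,1]$ never creates the point $0$, hence deforms the pair $(B_{\varrho_1},B_{\varrho_1}\setminus\{0\})$ onto $(B_{\varrho_1}\cap\varphi^0,B_{\varrho_1}\cap\varphi^0\setminus\{0\})$, and then $C_k(\varphi,0)=H_k(B_{\varrho_1}\cap\varphi^0,B_{\varrho_1}\cap\varphi^0\setminus\{0\})=H_k(B_{\varrho_1},B_{\varrho_1}\setminus\{0\})=0$ because both the ball and the punctured ball are contractible in the infinite-dimensional space $\wpo$. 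Your long-exact-sequence route would close only after the punctured sublevel set is shown acyclic, and in practice that acyclicity is obtained from the same pair deformation, i.e.\ from the claim you left unproved.
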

\begin{proof}
The idea of the proof follows the proof of Proposition 2.1 in  Jiu-Su \cite{JiuSu2003} (see also Gasi\'nski-Papageorgiou \cite{GasinskiPapageorgiou2014}). From the definition of the critical groups for any $\varrho>0$ such that $K_{\varphi}\cap \varphi^0\cap B_{\varrho}=\{0\}$  we have
$$
C_k(\varphi,0)=H_k(B_{\varrho}\cap \varphi^0, B_{\varrho}\cap \varphi^0\backslash \{0\}), \quad \forall k\geq 0,
$$
 where $B_{\varrho}:=\{u\in\wpo \mid \|u\|\leq \varrho\}$. We know that
$$
C_k(B_{\varrho}, B_{\varrho}\backslash\{0\})=0,   \quad \forall k\geq 0
$$
for any $\varrho>0$ as $B_{\varrho}$ and $B_{\varrho}\backslash\{0\}$ are contractible, because of the fact that $\wpo$ is infinite dimensional (see Propositions 6.24 and 6.25 in Motreanu-Motreanu-Papageorgiou \cite{MotreanuMotreanuPapageorgiou2014}). So our aim is to construct a deformation mapping for $(B_{\varrho}, B_{\varrho}\backslash\{0\})$ and $(B_{\varrho}\cap \varphi^0, B_{\varrho}\cap \varphi^0\backslash\{0\})$. First we prove 
%in the same manner as in the proof of Theorem \ref{twr:2cs_sol}
 that for a given $u\in\wpo\backslash \{0\}$ we can find $t_0\in(0,1)$
such that 
\begin{equation}\label{eq:phi_t0}
\varphi(tu)<0 \ \tto{for any} \  t\in(0,t_0).
\end{equation}
Choose $u\in\wpo\backslash \{0\}$. Then, by virtue of hypothesis $\fullref{ass:ha1:mu}$, for a given $\varepsilon>0$ we can find $\delta_{\varepsilon}\in(0,\delta_0]$ such that
$$
G_0(t)\leq \varepsilon t^{\mu}, \ \forall t \in (0, \delta_{\varepsilon}].
$$
There exists some $t_0\in(0,1)$ such that for any $t\in(0,t_0)$ we have $|\nabla (tu)(z)|\leq \min\{\delta_{\varepsilon}, c_+\}$ and $|tu(z)|\leq \min\{\delta_{\varepsilon}, c_+\}$ (see $\fullref{ass:hf1:wc}$) for a.e. $z\in\Omega$. Then
\begin{equation*}
\varphi(tu)= \comz{G(|\nabla (tu)(z)|)}-\comz{F(z, u(z))}\leq t^{\mu}\varepsilon\|\nabla u\|_{\mu}^{\mu}-\hat{c}_2 t^{\mu}\|u\|_{\mu}^{\mu},
\end{equation*}
so choosing $\varepsilon<\frac{\hat{c}_2\|u\|_{\mu}^{\mu}}{\|\nabla u\|_{\mu}^{\mu}}$ we see that \eqref{eq:phi_t0} holds.
\begin{claim}
There exists some $\varrho_1>0$, such that for any $u\in\wpo\backslash \{0\}$ with $\varphi(u)=0$ and $0<\|u\|\leq \varrho_1$ we have
\begin{equation}\label{eq:dtphi1}
\dpd{}{ t}\varphi(tu)\big|_{t=1}>0. 
\end{equation}
\end{claim}
\begin{proofclaim}{}
From \eqref{eq:Fconc} we have that there exist some $\widetilde{c}_3>0$, $\delta_1>0$ such that
\begin{equation*}
F(z, \xi)\geq \widetilde{c}_3 |\xi|^p, \ \tto{for. a.e.} z\in\Omega \tto{and for }  |\xi|\leq \delta_1. 
\end{equation*}
Thus, using \eqref{eq:ass:hf1:eq:bdd}, we obtain that
\begin{equation}\label{eq:Fgeqp-r}
F(z, \xi)\geq \widetilde{c}_3 |\xi|^p-\widetilde{c}_4|\xi|^{r+1} \ \tto{for. a.e.} z\in\Omega \tto{and for all} \xi\in\rzecz 
\end{equation}
with some $\widetilde{c}_4>0$. Now let $u\in\wpo$ be such that $\varphi(u)=0$. Then
\begin{equation}\label{eq:dtphi12}\begin{split}
 \dpd{}{ t}\varphi(tu)\big|_{t=1}&=\langle \varphi'(tu), u \rangle\big|_{t=1}=\comz{\left(a(\nabla u(z)), \nabla u(z)\right)_{\rzecz^N}}-\comz{f(z, u(z))u(z)}\\
& \geq (p-\mu)\comz{G(\nabla u)}
+ \!\!\!\int\limits_{\{|u|\leq\delta_0\}}\!\! \!
\left(\mu F(z, u(z))-f(z, u(z)) \right) \ud z\\
&+ \!\!\!\int\limits_{\{|u|>\delta_0\}}\!\!\!
  \mu \widetilde{c}_3|u(z)|^p \ud  z 
  -\!\!\!\int\limits_{\{|u|>\delta_0\}}\!\!\!
   \left( \mu \widetilde{c}_4|u(z)|^{r+1}+\widehat{c}_3 u(z) +\widehat{c}_3|u(z)|^q\right) \ud  z\\
& \geq \frac{c_3}{p(p-1)}\|\nabla u\|_p^p +  \mu \widetilde{c}_3\|u\|_p^p 
- \!\!\!\int\limits_{\{|u|\leq\delta_0\}}\!\!\!
 \mu \widetilde{c}_3|u(z)|^p \ud z \\
 &- \!\!\!\int\limits_{\{|u|>\delta_0\}}\!\!\!
   %\left( \mu \widetilde{c}_4|u(z)|^{r+1}+\widehat{c}_3 |u(z)| +\widehat{c}_3|u(z)|^q\right) \ud  z.
   \left( \mu \widetilde{c}_4|u(z)|^{r+1}+\widehat{c}_3 |u(z)| +\widehat{c}_3|u(z)|^q\right) \ud  z.
\end{split}\end{equation}
Here we have used \eqref{eq:osz}, \eqref{eq:ass:hf1:eq:flF} and \eqref{eq:Fgeqp-r}. Observe that we can find $\widetilde{M}_0>0$ such that for any $\xi>\delta_0$ we have
$$
 \mu \widetilde{c}_4 \xi^{r+1}+\widehat{c}_3 \xi +\widehat{c}_3\xi^q\leq \widetilde{M}_0 \xi^{r+1}
$$
(recall that $1<q<p\leq r$). So, returning to \eqref{eq:dtphi12}, we obtain
\begin{equation*}\begin{split}
 \dpd{}{ t}\varphi(tu)\big|_{t=1}&\geq \frac{c_1}{p(p-1)}\|\nabla u\|_p^p + \mu \widetilde{c}_3\|u\|_p^p 
- \!\!\!\int\limits_{\{|u|\leq\delta_0\}}\!\!\!
 \mu \widetilde{c}_3|u(z)|^p \ud z 
 - \!\!\!\int\limits_{\{|u|>\delta_0\}}\!\!\!
  \widetilde{M}_0 |u(z)|^{r+1}\ud  z\\
  &\geq \widetilde{c}_5 \|u\|^p - \mu \widetilde{c}_1 \|u\|_p^p-  \widetilde{M}_0\|u\|_{r+1}^{r+1}\\
  &\geq \widetilde{c}_5 \|u\|^p - \widetilde{c}_4\|u\|_{r+1}^{r+1}\geq \widetilde{c}_5 \|u\|^p - \widetilde{c}_6\|u\|^{r+1}
\end{split}\end{equation*}
with some constants $\widetilde{c}_5, \widetilde{c}_6>0$. Here we have used the continuity of the embedding $L^p(\Omega)\subseteq L^{r+1}(\Omega)$. As $p<r+1$, we can find $\varrho_1>0$ such that for $0<\|u\|\leq \varrho_1$ we have \eqref{eq:dtphi1}.
 \end{proofclaim}
Observe that we can take $\varrho_1>0$ small enough to have
\begin{equation*}
K_{\varphi}\cap \varphi^0\cap B_{\varrho_1}=\{0\}.
\end{equation*} 
 The Claim implies that for any $u\in\wpo$ with $\varphi(u)<0$ and $\|u\|\leq \varrho_1$ we have
 \begin{equation}\label{eq:phi_tu}
 \varphi(tu)<0, \ t\in(0,1).
 \end{equation}
 Indeed, if we choose $u\in \wpo$ with $\varphi(u)<0$ and $\|u\|\leq\varrho_1$, then from the continuity of $\varphi$ we have that there exists some $t_1\in[0,1)$ such that
 $$
 \varphi(tu)<0 \ \tto{for} \ t\in(t_1, 1) 
 $$
Now suppose that \eqref{eq:phi_tu} is not true. Then  there exists some  $t_2\in(0,t_1)$ such that
$$
\varphi(t_2 u)=0 \ \tto{with} \ \varphi(tu)<0 \ \tto{for} \ t\in(t_2, 1).
$$
Hence
$$
\dpd{}{t}\varphi(t(t_2u))\big|_{t=1}= \dpd{}{ t}\varphi(tu)\big|_{t=t_2}=\lim\limits_{t\searrow t_2}\frac{\varphi(tu)-\varphi(t_2 u)}{t}\leq 0.
$$
On the other hand, as $\|t_2 u\|\in(0, \varrho_1)$, we have from the Claim that
$$
\dpd{}{ t}\varphi(t(t_2u))\big|_{t=1}>0,  
$$
a contradiction.

Observe that by \eqref{eq:phi_t0}, \eqref{eq:dtphi1} and \eqref{eq:phi_tu}, for any $u\in B_{\varrho_1}$ with $\varphi(u)>0$, there exists a unique $t_*\in(0,1)$ such that $\varphi(t_*u)=0$. Thus the following mapping $T\colon B_{\varrho_1}\to (0,1]$ is well defined
\begin{equation*}
T(u)=\left\{\begin{array}{ll}
1, &\tto{if}  \varphi(u)\leq 0\\
t_* & \tto{if} \varphi(u)>0 \tto{with}  \varphi(t_*u)=0.
\end{array}\right.
\end{equation*}
To show that the mapping $T$ is continuous, take $u\in B_{\varrho_1}$ with $\varphi(u)=0$ and let $\{u_n\}_{n\geq 1}\subseteq B_{\varrho_1} $ be a sequence such that 
$\lim\limits_{n\to\infty}u_n=u $ with $\varphi(u_n)>0$ (because if $\varphi(u_n)\leq 0$ then $T(u_n)=1=T(u)$).
To derive a contradiction, let us suppose that there exists a subsequence, still denoted by $\{u_n\}_{n\geq 1}$, such that
$$
\varphi(u_n) = t_{*,n}<1-\epsilon,
$$
with some $\epsilon\in(0,1)$. Then 
$$
\varphi(tu_n)>0 \ \tto{for all} \ t\in(1-\epsilon, 1],
$$
so, by the continuity of $\varphi$, 
$$\varphi(tu)\geq 0  \ \tto{for all} \ t\in(1-\epsilon, 1].$$
Hence $\varphi(tu)= 0$ (see \eqref{eq:phi_tu}) and thus
$$\dpd{}{ t}\varphi(tu)\big|_{t=1}=0,$$
which contradicts \eqref{eq:dtphi1}. This proves that the mapping $T$ is continuous. It is easy to see that the mapping $\widetilde{h}\colon [0,1]\times  B_{\varrho_1}\to  B_{\varrho_1}$, given by
\begin{equation*}
\widetilde{h}(t,u)=(1-t)u+t T(u)u, \ s\in[0,1], u\in B_{\varrho_1},
\end{equation*}
is a continuous deformation from $(B_{\varrho}, B_{\varrho}\backslash\{0\})$ to $(B_{\varrho}\cap \varphi^0, B_{\varrho}\cap \varphi^0\backslash\{0\})$ with $\widetilde{h}(1, \cdot)\big|_{B_{\varrho}\cap \varphi^0}=id\big|_{B_{\varrho}\cap \varphi^0}$, hence
$$
H_k(B_{\varrho_1}\cap \varphi^0, B_{\varrho_1}\cap \varphi^0\backslash \{0\})=H_k(B_{\varrho_1}, B_{\varrho_1}\backslash \{0\})
$$
(see Corollary 6.15 in Motreanu-Motreanu-Papageorgiou \cite{MotreanuMotreanuPapageorgiou2014}). So $C_k(\varphi,0)=0$, $k\geq 0$.
\end{proof}
%%%%%%%%%%%%%%%%%%%%%%%%%%%%%%%%%%%%%%%%%%%%%%%%%%%%%%%%%%%%%%%%%%%
\begin{stwr}\label{stwr:nodal}
If hypotheses \emph{$\fullref{ass:ha1:zero}$}--\emph{$\fullref{ass:ha1:apG}$} and \emph{$\fullref{ass:hf1:ainf}$}--\emph{$\fullref{ass:hf1:bdd}$} hold, then problem \eqref{Problem} admits a nodal solution $y_0\in[v_-, u_+]\cap C^1(\Omega)$.
\end{stwr}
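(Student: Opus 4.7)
My plan is to obtain $y_0$ as a third critical point of the truncated functional $\widetilde{\varphi}$ via a mountain pass construction, after showing that both $u_+$ and $v_-$ are local $\wpo$-minimizers of $\widetilde{\varphi}$; then I will distinguish $y_0$ from $u_+$, $v_-$ and $0$ using critical-group invariants, and extremality of $u_+$ and $v_-$ will force $y_0$ to change sign.

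First I would show that $u_+$ is the global minimizer of $\widetilde{\varphi}^+$. This functional is weakly sequentially lower semicontinuous and coercive (already noted after its definition), and it attains a negative value at some $t\check{v}_+$ with $\check{v}_+\in\ttt{int }C_+$ and $t>0$ small, by combining $\fullref{ass:ha1:mu}$ with the $\mu$-lower bound \eqref{eq:Fconc} on $F$, exactly as in the argument for $\check{\varphi}_+$ in the proof of Proposition \ref{stwr:extremality}. Hence $\widetilde{\varphi}^+$ attains a nontrivial minimum $u^*\in [0,u_+]$, which by the truncation structure \eqref{eq:ftr3} solves \eqref{Problem}; minimality of $u_+$ inside $\mathcal{Y}_+$ forces $u^*=u_+$. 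Since $u_+\in\ttt{int }C_+$, for $h\in C^1(\bar{\Omega})$ with $\|h\|_{C^1(\bar{\Omega})}$ small one has $u_++h\geq 0$, so $\widetilde{\varphi}(u_++h)=\widetilde{\varphi}^+(u_++h)\geq\widetilde{\varphi}^+(u_+)=\widetilde{\varphi}(u_+)$; thus $u_+$ is a local $C^1(\bar{\Omega})$-minimizer of $\widetilde{\varphi}$, promoted to a local $\wpo$-minimizer by Theorem \ref{twr:minimizers}. The symmetric argument applied to $\widetilde{\varphi}^-$ does the same for $v_-$.

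With two local $\wpo$-minimizers in hand and the Cerami condition from Lemma \ref{lt:cerami}, the standard three-critical-points dichotomy (in the spirit of \cite{MotreanuMotreanuPapageorgiou2014}) produces a critical point $y_0$ of $\widetilde{\varphi}$ of mountain pass type: either one of the minimizers is non-strict and a continuum of critical points appears between them, or the mountain pass geometry between $v_-$ and $u_+$ is valid and Theorem \ref{twr:mpt} applies. In either case Proposition \ref{stwr:cg1_mpt} yields $C_1(\widetilde{\varphi},y_0)\neq 0$. To see $y_0\in[v_-,u_+]$ and that it solves \eqref{Problem}, I would test $A(y_0)+|y_0|^{p-2}y_0=N_{\widetilde{f}}(y_0)$ with $(y_0-u_+)^+$ and $-(y_0-v_-)^-$; using the strict monotonicity of $a$ together with Remark \ref{rem:psi} (recall $u_+\geq u_*=\xi_0$ by Claim \ref{lt:minorantaa}, and symmetrically $v_-\leq -\xi_0$), both tests give $y_0\in[v_-,u_+]$. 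On this interval the truncation collapses to $\widetilde{f}(z,y_0)=f(z,y_0)+|y_0|^{p-2}y_0$, so $A(y_0)=N_f(y_0)$ and $y_0$ solves \eqref{Problem}; Theorem \ref{twr:liebe} then places $y_0\in C^1(\Omega)$.

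It remains to exclude $y_0\in\{0,u_+,v_-\}$. Since isolated local minimizers have vanishing first critical group, $C_1(\widetilde{\varphi},y_0)\neq 0$ immediately gives $y_0\neq u_+,v_-$. To rule out $y_0=0$, I would invoke Proposition \ref{stwr:ckphi0}: whenever $\|u\|_\infty<\min\{c_+,-c_-\}$ one has $v_-\leq u\leq u_+$ pointwise, so $\widetilde{F}(z,u)=F(z,u)+\tfrac{1}{p}|u|^p$ and $\widetilde{\varphi}=\varphi$ on this $L^\infty$-neighborhood of $0$; since the deformation argument in the proof of Proposition \ref{stwr:ckphi0} uses only pointwise estimates at small arguments coming from $\fullref{ass:ha1:mu}$, \eqref{eq:Fconc} and \eqref{eq:ass:hf1:eq:bdd}, it transports to $\widetilde{\varphi}$ and yields $C_k(\widetilde{\varphi},0)=0$ for every $k$, in particular $C_1(\widetilde{\varphi},0)=0\neq C_1(\widetilde{\varphi},y_0)$. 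Finally, if $y_0\geq 0$ then $y_0$ would be a nontrivial positive solution of \eqref{Problem} with $y_0\leq u_+$, contradicting minimality of $u_+$ in $\mathcal{Y}_+$; symmetrically $y_0\not\leq 0$, so $y_0\in[v_-,u_+]\cap C^1(\Omega)$ is nodal. I expect the principal technical obstacle to be the passage $C_k(\varphi,0)=0\Rightarrow C_k(\widetilde{\varphi},0)=0$: the two functionals coincide only in a $C^1$/$L^\infty$ neighborhood rather than a $\wpo$-neighborhood of the origin, so one must either rerun the deformation construction of Proposition \ref{stwr:ckphi0} directly for $\widetilde{\varphi}$ or invoke a $C^1$-versus-$\wpo$ critical-groups equivalence in the spirit of Theorem \ref{twr:minimizers}.
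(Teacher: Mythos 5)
Your proposal follows essentially the same route as the paper: the same truncation $\widetilde{f}$ and functionals $\widetilde{\varphi},\widetilde{\varphi}^{\pm}$, local $\wpo$-minimizers at $u_+,v_-$ obtained via global minimization and Theorem \ref{twr:minimizers}, the mountain pass theorem with the Cerami condition of Lemma \ref{lt:cerami}, localization of the critical points in $[v_-,u_+]$ by testing with $(y_0-u_+)^+$ and $(v_--y_0)^+$, Proposition \ref{stwr:ckphi0} with critical groups to exclude $y_0=0$, and extremality of $u_+,v_-$ plus regularity to conclude nodality and smoothness. Your only deviations are minor (separating $y_0$ from $u_+,v_-$ by the vanishing of $C_1$ at isolated local minimizers rather than by the mountain-pass level $m_{\varrho_0}$ as in Claim 3 of the paper), and the transfer issue between $C_k(\varphi,0)$ and $C_k(\widetilde{\varphi},0)$ that you flag is a genuine subtlety which the paper itself passes over silently, so your suggested fixes only strengthen the argument.
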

\begin{proof}\setcounter{claim_no}{0}
\begin{claim_no}\label{claim:nodal:cpoints}
$K_{\widetilde{\varphi}}\subseteq[v_-,u_+]$, $K_{\widetilde{\varphi}^+}=\{0, u_+\}$, $K_{\widetilde{\varphi}^-}=\{0, v_-\}$.
\end{claim_no}
\begin{proofclaim}{}
Let $u\in K_{\widetilde{\varphi}}$ and suppose $|\{u>u_+\}|_N:=|\{z\in\Omega | u(z)>u_+\}|_N>0$. Then
\begin{equation}\label{eq:arnft}
A(u)+|u|^{p-2} u=\widetilde{f}(u)
\end{equation}
and thus
\begin{equation*}\begin{split}
\left<A(u),(u-u_+)^+\right>
&+\comz{u(z)^{p-1}(u-u_+)^+(z)} =\comz{\widetilde{f}(z, u(z))(u-u_+)^+(z)}\\
&=\comz{f(z, u_+(z))(u-u_+)^+(z)}+\comz{\left(u(z)^{p-1}+\psi(u) -\psi( u_+(z)\right)(u-u_+)^+(z)}\\
&<\left<A(u_+),(u-u_+)^+\right>+\comz{(u(z))^{p-1}(u-u_+)^+(z)}
\end{split}\end{equation*}
(see \eqref{eq:ftr3}), because by Remark \ref{rem:psi}, for $u(z)>u_+(z)$  we have that $\psi(u(z))<\psi(u_+(z))$ (recall that $u_+\geq \xi_0$). This implies
$$
\int_{\{u>u_+\}}\left(a(\nabla u(z))-a(\nabla u_+(z)),\nabla((u-u_+)(z))\right)_{\rzecz^N}= \left<A(u)-A(u_+),(u-u_+)^+\right>< 0,
$$
a contradiction with the strict monotonicity of $a$ (see Proposition \ref{stwr:propG-a}). So for a.e. $z\in\Omega$ we have $u(z)\leq u_+(z)$. Acting with $(v_--u)^+\in \wpo$ on \eqref{eq:arnft} we can show that $u(z)\geq v_-(z)$ for a.e. $z\in\Omega$. In a similar fashion we prove that $K_{\widetilde{\varphi}^+}\subseteq[0, u_+]$, $K_{\widetilde{\varphi}^-}\subseteq[0, v_-]$. As 
$$\widetilde{\varphi}^+\big|_{[0, u_+]}=\varphi\big|_{[0, u_+]}
\tto{and}
\widetilde{\varphi}^-\big|_{[v_-,0]}=\varphi\big|_{[v_-,0]}$$
(see \eqref{eq:ftr3}), from the extremality of $u_+$ and $v_-$ (see Claim \ref{lt:minorantab} in the proof of Proposition \ref{stwr:extremality}) we have that 
 $K_{\widetilde{\varphi}^+}=\{0, u_+\}$ and $K_{\widetilde{\varphi}^-}=\{0, v_-\}$.
\end{proofclaim}
\begin{claim_no}\label{claim:nodal:minimizers}
$u_+$ and $v_-$ are local minimizers of $\widetilde{\varphi}$.
\end{claim_no}
\begin{proofclaim}{}
As $\widetilde{\varphi}_+$ is weakly sequentially lower semicontinuous and coercive, by Theorem \ref{twr:minicoer} we can find $\widetilde{u}_+\in\wpo$ such that
\begin{equation*}
\widetilde{\varphi}_+(\widetilde{u}_+) \ = \ \min\limits_{u\in\wpo}\widetilde{\varphi}_+(u).
\end{equation*}
Similarly as in the proof of Proposition \ref{stwr:extremality} we can show that $\widetilde{u}_+\neq 0$, so  $\widetilde{u}_+=u_+\in\ttt{int}C_+$ (see Claim \ref{claim:nodal:cpoints}). We have that $\widetilde{\varphi}^+\big|_{\ttt{int}C_+}=\widetilde{\varphi}\big|_{\ttt{int}C_+}$, hence $u_+$ is a local $C^1$-minimizer of $\widetilde{\varphi}$. Thus by Theorem \ref{twr:minimizers} $u_+$ is also a $\wpo$-minimizer of $\widetilde{\varphi}$. Analogously, using the functional $\widetilde{\varphi}_-$, we prove that $v_-$ is a~$\wpo$-minimizer of $\widetilde{\varphi}$ as well.
\end{proofclaim}
We can clearly assume that $\widetilde{\varphi}(v_-)\leq \widetilde{\varphi}(u_+)$. If the opposite inequality holds, the argumentation is analogous to this below.
\begin{claim_no}\label{claim:nodal:mpt}
There exists $\varrho_0\in(0,1)$ such that 
\begin{equation*}
\widetilde{\varphi}(v_-)
\leq \widetilde{\varphi}(u_+)
<\inf\{\widetilde{\varphi}(u)\mid \|u-u_+\|=\varrho_0\}=:m_{\varrho_0}
 \ \tto{and} \ 
 \|v_--u_+\|>\varrho_0.
\end{equation*}
\end{claim_no}
\begin{proofclaim}{}
As $u_+$ is an isolated critical point of $\widetilde{\varphi}$ (see Claim \ref{claim:nodal:cpoints}), the part of the proof of Proposition 6 in Motreanu-Motreanu-Papageorgiou \cite{MotreanuMotreanuPapageorgiou2007} is applicable (see also the proof of Theorem 3.4 in Gasi\'nski-Papageorgiou \cite{GasinskiPapageorgiou2009}), with the use of Proposition \ref{stwr:propG-a}.
\end{proofclaim}
Using Claim \ref{claim:nodal:mpt}  and Lemma \ref{lt:cerami} we are able to  apply the mountain pass theorem (Theorem \ref{twr:mpt}) and obtain $y_0\in\wpo$ such that
\begin{equation}\label{eq:y0}
y_0\in K_{\widetilde{\varphi}}
\ \tto{and} \
m_{\varrho_0}\leq \widetilde{\varphi}(y_0).
\end{equation}
We have that $y_0\subseteq [v_-, u_+]$ (see Claim \ref{claim:nodal:cpoints}), so, as $\widetilde{\varphi}\big|_{[v_-,u_+]}=\varphi\big|_{[v_-,u_+]}$, $y_0$ solves problem \eqref{Problem}. Also from Claim \ref{claim:nodal:mpt} and \eqref{eq:y0} we infer that $y_0\neq v_-$ and $y_0\neq u_+$. Thus, if $y_0\neq 0$, the extremality of $v_-$ and $u_+$ implies that $y_0$ is a nodal, nontrivial solution of problem \ref{Problem}. We can assume that $y_0$ is an isolted critical point - otherwise we obtain a whole sequence of nontrivial, nodal solutions, which ends the proof.  Hence, as $y_0$ is of mountain pass type,  it follows that
\begin{equation*}
C_1(\varphi,y_0)\neq 0
\end{equation*}
(see Proposition \ref{stwr:cg1_mpt}). This implies $y_0$ is nontrivial (see Proposition \ref{stwr:ckphi0}).  As before we use the regularity theory to prove that $y_0\in C^1(\Omega)$.
\end{proof}
\begin{uw}
We only need hypothesis $\fullref{ass:hf1:bdd}$ to prove the Claim in Proposition \ref{stwr:ckphi0}. It remains true if instead of $\fullref{ass:hf1:bdd}$ we assume
\begin{assum}{H(f)}{ass:hf1:bdd2}
There exist $\widetilde{M}_1>0$ such that
\begin{equation}\label{eq:ass:hf1:eq:bdd2}
f(z, \xi) \xi\leq \mu F(z, \xi), \ |\xi|> \widetilde{M}_1
\end{equation}
for a.e. $z\in\Omega$.
\end{assum}
Then for  $u\in\wpo$ with  $\varphi(u)=0$ we have
\begin{equation}\label{eq:dtphi13}\begin{split}
 \dpd{}{ t}\varphi(tu)\big|_{t=1}&=\langle \varphi'(tu), u \rangle\big|_{t=1}=\comz{\left(a(\nabla u(z)), \nabla u(z)\right)_{\rzecz^N}}-\comz{f(z, u(z))u(z)}\\
& \geq (p-\mu)\comz{G(\nabla u)}
+ \!\!\!\int\limits_{\{\delta_0<u\leq \widetilde{M}_1\}}\!\!\!
\left(\mu F(z, u(z))-f(z, u(z)) \right) \ud z\\
& \geq \frac{c_1}{p(p-1)}\|\nabla u\|_p^p 
+\!\!\!\int\limits_{\{\delta_0<u\leq \widetilde{M}_1\}}\!\!\!
-\frac{\hat{c}_1}{r}|u(z)|^r \ud z   
- \!\!\!\int\limits_{\{\delta_0<u\leq \widetilde{M}_1\}}\!\!\!
a_{\widetilde{M}_1}(z)|u(z)|\ud z\\
&\geq \widetilde{c}_7\|u\|^p -\widetilde c_8\|u\|_{r+1}^{r+1}\geq \widetilde{c}_7\|u\|^p -\widetilde c_8\|u\|^{r+1}
\end{split}\end{equation}
(see $\fullref{ass:hf1:ainf}$, $\fullref{ass:hf1:eq:fgeqx}$) and then we proceed like in the proof of the Claim.
\end{uw}
Theorem \ref{twr:main_theorem} is a consequence of Proposition \ref{stwr:extremality} and Proposition \ref{stwr:nodal}.


\begin{thebibliography}{99}
\bibitem{aBenciAvenia2000a}
 V. Benci, P. D'Avenia, D. Fortunato, L. Pisani,
 \emph{Solitons in several space dimensions:
Derrick’s problem and
infinitely many solutions},
Arch. Ration. Mech. Anal.,
\textbf{154} (2000), pp. 297--324;
\bibitem{CF1989} \nabbibg{E. Casas, L. A. Fern\'andez}{A Green's formula for quasilinear elliptic operators}{J. Math. Anal. Appl.}{142}{1989}{62--73};
\bibitem{damas98} \nabbibg{L. Damascelli}{Comparison theorems for some quasilinear degenerate elliptic operators and applications to symmetry and monotonicity results}{Ann. Inst. H. Poincar\'e. Analyse
non lin\'eaire}{15}{1998}{493--516};
\bibitem{DiazSaa1987} \nabbibg{J. I. Diaz, J. E. Sa\'a}{Existence et unicit\'e de solutions positives pour certaines \'equations elliptiques quasilin\'eaires}{ C. R. Acad. Sci. Paris S\'er. I}{305}{1987}{521--524};

\bibitem{aDrabek2007a}
P. Dr\'abek,
\emph{The $p$-Laplacian - mascot of nonlinear analysis},
Acta Math. Univ. Comenianae,
\textbf{76} (2000), pp. 85--98;

\bibitem{DunfordSchwartz1988} \nabbibk{N. Dunford, J. T. Schwartz}{Linear Operators. I. General Theory, Volume 7 of Pure and Applied Mathematics}{Wiley}{New York}{1988};
\bibitem{cGasinskiPapageorgiou2008a}
 L. Gasi\'nski,  N. S. Papageorgiou,
 \emph{Existence and multiplicity of solutions for {N}eumann $p$-Laplacian-type equations},
 Adv. Nonlinear Stud.
\textbf{8} (2008), pp. 843--870;
\bibitem{GasinskiPapageorgiou2009}\nabbibg{L. Gasi\'nski, N. S. Papageorgiou}{Nodal and multiple constant sign solutions for resonant p-Laplacian equations with a nonsmooth potential}{Nonlinear Anal.}{71}{2009}{5747--5772};
\bibitem{GP_NA} \nabbibk{L. Gasi\'nski, N. S. Papageorgiou}{Nonlinear Analysis}{Chapman and Hall/CRC Press}{Boca Raton}{2009};
\bibitem{GasinskiPapageorgiou2010} 
 L. Gasi\'nski,  N. S. Papageorgiou,
 \emph{Multiplicity of solutions for nonlinear elliptic equations with combined nonlinearities},
  in: Handbook of Nonconvex Analysis and Applications, Chapter 4, D. Y. Gao and D. Motreanu (eds.), International Press, Boston, 2010, 183--262;
\bibitem{GasinskiPapageorgiou2014} \nabbibg{L. Gasi\'nski, N. S. Papageorgiou}{Dirichlet (p,q)-equations at resonance}{Discrete Contin. Dyn. Syst.}{34}{2014}{2037--2060};
\bibitem{JiuSu2003} \nabbibg{Q. Jiu, J. Su}{Existence and multiplicity results for Dirichlet problems with p-Laplacian}{J. Math. Anal. Appl.}{281}{2003}{587--601};
\bibitem{Klimczak2015a} L. Klimczak,
\emph{Two constant sign solutions for a nonhomogeneous Neumann boundary value problem},
to appear in: Ann. Univ. Paedagog. Crac. Stud. Math., \textbf{14} (2015), pp. 47--62;
\bibitem{Klimczak2015b} L. Klimczak,
\emph{Existence and multiplicity of solutions for a nonhomogeneous Neumann boundary problem},
to appear in: Opuscula Math., \textbf{35} (2015);
\bibitem{Leoni} \nabbibk{G. Leoni}{A First Course in Sobolev Spaces}{American Mathematical Society}{Providence Rhode Island}{2009};
\bibitem{liebe} \nabbibg{G. M. Lieberman}{The natural generalization of the natural conditions of Ladyzhenskaya and Ural'tseva for elliptic equations}{ Commun. Partial Differential
Equations}{16}{1991}{311--361};
\bibitem{monte99} \nabbibg{M. Montenegro}{Strong maximum principles for supersolutions
of quasilinear elliptic equations}{Nonlinear Anal.}{37}{1999}{431--448};
\bibitem{MotreanuMotreanuPapageorgiou2007} \nabbibg{D. Motreanu, V. V. Motreanu, N. Papageorgiou}{A degree theoretic approach for multiple solutions of constant sign for nonlinear elliptic equations}{Manuscripta Math.}{124}{2007}{507--531};
\bibitem{MotreanuMotreanuPapageorgiou2014} \nabbibk{D. Motreanu, V. V. Motreanu, N. Papageorgiou}{Topological and Variational Methods with Applications to Nonlinear Boundary Value Problems}{Springer New York}{New York Heidelberg Dordrecht London}{2014};
%\bibitem{papa24} \nabbibg{D. Motreanu, N. S. Papageorgiou}{Multiple solutions for nonlinear Neumann problems driven by a nonhomogeneous differential operators}{Proc. Amer. Math.
%Soc.}{139}{2011}{3527-3535};
\bibitem{MotreanuPapageorgiou2011} \nabbibg{D. Motreanu, N. S. Papageorgiou}{Multiple solutions for nonlinear Neumann problems driven by a nonhomogeneous differential operators}{Proc. Amer. Math.
Soc.}{139}{2011}{3527--3535};
\bibitem{pserr2007} \nabbibk{P. Pucci, J. Serrin}{The Maximum Principle}{Birkh\"auser Verlag AG}{Basel Boston Berlin}{2007};
\bibitem{ziib} \nabbibk{E. Zeidler}{Nonlinear Functional Analysis and Its Applications II/B. Nonlinear Monotone operators}{Springer-Verlag}{New York Berlin Heidelberg London Paris Tokyo}{1990}.

\end{thebibliography}
\end{document}